\documentclass[a4paper,10pt,leqno]{amsart}
\textwidth=14cm
\textheight=23.5cm
\hoffset=-1cm

\usepackage[cp850]{inputenc}

\usepackage[dvips]{graphics}

\usepackage{psfrag}

\usepackage{amsxtra,amssymb}

\usepackage{mathrsfs}

\numberwithin{equation}{section}

\newtheorem{theo}{Theorem}[section]

\newtheorem{prop}[theo]{Proposition}

\newtheorem{lemm}[theo]{Lemma}

\newtheorem{coro}[theo]{Corollary}

\theoremstyle{definition}

\theoremstyle{remark}

\begin{document}

\baselineskip=10pt

\title{Diophantine Approximation and special Liouvillenumbers}

\author{ Johannes Schleischitz}
                            
\maketitle

\vspace{8mm}

\begin{abstract}
This paper introduces some methods to determine the simultaneous approximation constants 
of a class of well approximable numbers $\zeta_{1},\zeta_{2},\ldots ,\zeta_{k}$.
The approach relies on results on the connection between the set of all $s$-adic expansions ($s\geq 2$)
of $\zeta_{1},\zeta_{2},\ldots ,\zeta_{k}$ and their associated approximation constants.
As an application, explicit construction of real numbers $\zeta_{1},\zeta_{2},\ldots ,\zeta_{k}$ with
prescribed approximation properties are deduced and illustrated by Matlab plots.
\end{abstract}
{\footnotesize{ AMS 2010 Mathematics Subject  Classification: 11J13, 11H06, 11J81}}\\
 \vspace{4mm}
 \indent {\footnotesize{ Supported by FWF grant P22794-N13}}

\vspace{8mm}

\section{Introduction}

\maketitle

\subsection{Basic facts and notations.} 
This paper deals with the one parameter simultaneous approximation problem

\begin{eqnarray}
 \vert x\vert &\leq& Q^{1+\theta} \label{eq:rof}  \\
 \vert \zeta_{1}x-y_{1}\vert &\leq& Q^{-\frac{1}{k}+\theta}  \nonumber  \\
 \vdots \quad &\vdots& \quad \vdots   \nonumber \\
 \vert \zeta_{k}x-y_{k}\vert &\leq& Q^{-\frac{1}{k}+\theta},   \nonumber
\end{eqnarray}
where $\zeta_{1},\zeta_{2},\ldots ,\zeta_{k}$ are real numbers which we will assume to be linearly independent together with $1$
 and $x,y_{1},y_{2},\ldots ,y_{k}$ are integers 
to be determined in dependence of the parameter $Q>1$ in order to minimize $\theta$. To be more precise, we 
define the function $\psi_{j}(Q)$ 
for $1\leq j\leq k+1$ by setting $\psi_{j}(Q)$ the minimum over all $\theta\in{\mathbb{R}}$ such that
there are $j$ linearly independent vectors $(x,y_{1},y_{2},\ldots ,y_{k})\in{\mathbb{Z}^{k+1}}$ that satisfy the system 
(\ref{eq:rof}). In the sequel we will restrict to approximation vectors with $x>0$, which clearly is no loss of generality as
$(x,y_{1},\ldots ,y_{k})\mapsto (-x,-y_{1},\ldots ,-y_{k})$ does not affect approximation constants.
Another equivalent way to view the functions $\psi_{j}$ is to consider the lattice
$\Lambda=\{(x,\zeta_{1}x-y_{1},\ldots ,\zeta_{k}x-y_{k}):x,y_{1},\ldots y_{k}\in{\mathbb{Z}} \}$
and the convex body (in fact the parallelepiped) $K(Q)$ defined as the set of points 
$(z_{1},z_{2},\ldots ,z_{k+1})\in{\mathbb{R}^{k+1}}$ with

\begin{eqnarray}
  \vert z_{1}\vert &\leq& Q    \label{eq:tommot}  \\
  \vert z_{i}\vert &\leq& Q^{-\frac{1}{k}},\qquad 2\leq i\leq k+1,   \label{eq:tomot}
\end{eqnarray}
and to define $\lambda_{j}(Q)$ as the $j$-th {\em successive minimum} of $\Lambda$ with respect to $K(Q)$.
This $j$-th minimum is defined as the infimum over all $\lambda>0$ for which the $\mathbb{R}$-span of $\lambda K(Q)\cap \Lambda$ 
has dimension at least $j$, or equivalently $\lambda K(Q)$ contains $j$ linearly independent points of $\Lambda$. \\
With respect to these successive minima $\lambda_{j}$, the functions $\psi_{j}(Q)$ can also be determined by

\[
 Q^{\psi_{j}(Q)}=\lambda_{j}(Q).
\]
One has the inequalities
 
 \begin{equation} \label{eq:mbni}
 -1\leq \psi_{1}(Q)\leq \psi_{2}(Q)\leq \ldots \leq \psi_{k+1}(Q)\leq \frac{1}{k} 
\end{equation}
as we will show later, and Dirichlet's Theorem states

\begin{equation} \label{eq:11}
 \psi_{1}(Q)<0 \qquad \text{for all} \quad Q>1.
\end{equation}
Minkowski's second convex body theorem yields for any convex body $K$ with volume $V(K)$ and any lattice $\Lambda$

\[
 \frac{2^{k+1}}{(k+1)!}\frac{\det(\Lambda)}{V(K)}\leq \lambda_{1}\lambda_{2}\cdots \lambda_{k+1}\leq 2^{k+1}\frac{\det(\Lambda)}{V(K)},
\]
see \cite{1}, so that in our special case, as $V(K(Q))=1$ for every $Q$, we have                                                               

\[
 c_{1}(\Lambda)\leq \lambda_{1}(Q)\lambda_{2}(Q)\cdots \lambda_{k+1}(Q) \leq c_{2}(\Lambda)
\]
uniformly in the parameter $Q$. With $q:=\log(Q)$ and taking logarithms, this yields

\begin{equation} \label{eq:zokz}
 q\left\vert \sum_{i=1}^{k+1} \psi_{i}(Q)\right\vert \leq C(\Lambda),
\end{equation}
with some constant $C(\Lambda)$ not depending on $Q$.\\
Another important property of the joint behaviour of the functions $\psi_{j}$ is that for any given $1\leq s\leq k$ there 
are arbitrarily large values $Q=Q(s)$ such that 

\begin{equation}  \label{eq:asdf}
\psi_{s}(Q)=\psi_{s+1}(Q)
\end{equation}
provided that $1,\zeta_{1},\zeta_{2},\ldots ,\zeta_{k}$ are linearly independent over $\mathbb{Q}$, see Theorem 1.1 in \cite{5}.                   
To quantify the behaviour of $\psi_{j}(Q)$ Summerer and Schmidt introduced the quantities 

\[
\underline{\psi}_{i}:= \liminf_{Q\to\infty} \psi_{i}(Q),\qquad \overline{\psi}_{i}:= \limsup_{Q\to\infty} \psi_{i}(Q),
\]
and gave the estimates

\begin{eqnarray}
  \underline{\psi}_{j}&\geq& \frac{j-k-1}{kj},\qquad 1\leq j\leq k+1  \label{eq:1000alk} \\
  \overline{\psi}_{j} &\geq& \frac{j-k}{k(j+1)},\qquad 1\leq j\leq k,  \label{eq:1001alk}
\end{eqnarray}
where (\ref{eq:1001alk}) requires $1,\zeta_{1},\zeta_{2},\ldots ,\zeta_{k}$ to be linearly independent over $\mathbb{Q}$ again.
Each of these bounds will be shown to be best possible in Corollary \ref{korol1}.
Moreover, (\ref{eq:asdf}) implies

\begin{equation} \label{eq:rmuo}
 \underline{\psi}_{i+1}\leq \overline{\psi}_{i},\qquad 1\leq i\leq k.
\end{equation} 
In order to study the dynamical behaviour of the functions $\psi_{j}(Q)$ it will be convenient to work with functions 

\[
 L_{j}(q)= q\psi_{j}(Q)
\]
as these functions are piecewise linear with slopes among $\{-1,\frac{1}{k}\}$. Therefore we have (\ref{eq:mbni}).
Defintion (\ref{eq:zokz}) is equivalent to 

\begin{equation} \label{eq:zoku}
\left\vert \sum_{i=1}^{k+1} L_{i}(q)\right\vert \leq C(\Lambda).
\end{equation}
We also introduce the classical 
approximation constants $\omega_{j},\widehat{\omega}_{j}$ defined by Jarnik, Bugeaud in addition to
$\underline{\psi}_{j}, \overline{\psi}_{j}$. For fixed $\zeta_{1},\zeta_{2},\ldots ,\zeta_{k}$
and for every $X>0$ define the functions $\omega_{j}(X)$ as the supremum over all real numbers $\nu$ (in fact the maximum)
 such that the system

\begin{equation}  \label{eq:1}
 \vert x\vert \leq X, \quad \vert \zeta_{i}x- y_{i}\vert \leq X^{-\nu}, \qquad 1\leq i \leq k,
\end{equation}
has $j$ linearly independent solutions $(x,y_{1},\ldots ,y_{k})\in{\mathbb{Z}^{k+1}}$. 
The approximation constants $\omega_{j},\widehat{\omega}_{j}$ are now defined as

\begin{equation*}
 \omega_{j}= \limsup_{X\to\infty} \omega_{j}(X), \qquad   \widehat{\omega}_{j}= \liminf_{X\to\infty} \omega_{j}(X). 
\end{equation*}
We will put $\omega:=\omega_{1}, \widehat{\omega}:=\widehat{\omega}_{1}$ and denote by 
$\Omega=(\omega,\omega_{2},\ldots ,\omega_{k+1},\widehat{\omega},\ldots ,\widehat{\omega}_{k+1})\in{\mathbb{R}^{2k+2}}$ the 
vector of classical approximation constants (relative to $\zeta_{1},\zeta_{2},\ldots ,\zeta_{k}$).
Very similar to the proof of Theorem 1.4 in \cite{5}, which                                                                                             
treats the special case $j=1$, one obtains 

\begin{equation} \label{eq:2}
 (1+\omega_{j})(1+\underline{\psi}_{j})= (1+\widehat{\omega}_{j})(1+\overline{\psi}_{j})= \frac{k+1}{k},\qquad 1\leq j\leq k+1.
\end{equation}
One just needs to replace ''a solution'' by ''$j$ linearly independent solutions'' at any place it occurs in the proof.
Combining (\ref{eq:2}) with (\ref{eq:1000alk}),(\ref{eq:1001alk}) for $1,\zeta_{1},\zeta_{2},\ldots ,\zeta_{k}$ linearly independent over $\mathbb{Q}$ we obtain the bounds  
 
\begin{eqnarray}
 \frac{1}{k}&\leq& \omega \quad \leq \quad \infty   \label{eq:2000neu}   \\
 \frac{1}{k}&\leq& \omega_{2} \quad \leq \quad 1,           \label{eq:2222neu}      \\
 0 &\leq& \omega_{j} \quad \leq \quad \frac{1}{j-1},  \qquad 3\leq j\leq k+1   \label{eq:2002neu}  
\end{eqnarray}
for the constants $\omega_{j}$ as well as

\begin{eqnarray}
  \frac{1}{k}&\leq& \widehat{\omega} \quad \leq \quad 1      \label{eq:2001neu}      \\
 0 &\leq& \widehat{\omega}_{j} \quad \leq \quad \frac{1}{j}, \qquad 2\leq j\leq k.   \label{eq:2003neu}  \\
 0 &\leq& \widehat{\omega}_{k+1} \leq \quad \frac{1}{k}                       \label{eq:2004neu}
\end{eqnarray}
for the constants $\widehat{\omega}_{j}$. Each considered individually, these bounds
 again are best possible.\\

\subsection{Outline of the results}
In the present paper, we will put our focus on simultaneous approximation of numbers that allow good individual as well as
simultaneous approximation. Liouville numbers, that is real numbers $\zeta$ for which the inequality 

 \[
 \left\vert\zeta-\frac{p}{q}\right\vert\leq \frac{1}{q^{\eta}}
\]
has infinitely many rational solutions $\frac{p}{q}$ for arbitrarily large $\eta\in{\mathbb{R}}$, will be suitable examples
since they all satisfy $\omega=\infty$, where $\omega=\omega_{1}$ is defined by (\ref{eq:1}) in the onedimensional case.\\
\noindent In section 2, Propositions \ref{propo},
\ref{prop2}, we establish a connection between the $s$-adic expansions ($s\geq 2$) of the components $\zeta_{j}$ of
$(\zeta_{1},\zeta_{2},\ldots ,\zeta_{k})$ and the approximation constants $\omega,\widehat{\omega}$. 
These results are then applied to the case where all $\zeta_{j}$ admit good approximations in one fixed base
$s$ independent of $j$. After these considerations for suitable arbitrary $(\zeta_{1},\zeta_{2},\ldots ,\zeta_{k})$ we put our
focus on Liouville numbers, using heavily the fact that $\omega=\infty$ in this case. Theorem \ref{satz1} will allow to compute
all classical approximation constants $\omega_{j},\widehat{\omega}_{j}$ for a special type of Liouville numbers and the resulting 
Corollary \ref{koroo} will lead us to the construction of vectors $(\zeta_{1},\zeta_{2},\ldots ,\zeta_{k})$
with prescribed approximation constants $\omega_{j},\widehat{\omega}_{j}$ that are subject to certain restrictions.
As consequences of these results we will be able to give an explicit example of a vector $\zeta_{1},\zeta_{2},\ldots ,\zeta_{k}$
that shows a conjecture by
Wolfgang Schmidt concerning successive minima of a lattice to be true. A non-constructive proof was given by Moshchevitin in a nonconstructive way. 
Moreover we will construct cases where all functions $\psi_{j}$ simultaneously take all possible values of their spectrum 
for arbitrarily large $Q$.\\
Inspired by methods used to deal with Liouville numbers, we then gegeneralize Theorem \ref{satz1} to a wider class of vectors
$(\zeta_{1},\zeta_{2},\ldots ,\zeta_{k})$ for which $\omega<\infty$. This will be the subject of Theorems \ref{satz2},\ref{satz6}
 and lead to many more explicit constructions of special cases of the Schmidt Conjecture. \\

\noindent In the last section we will first discuss the special case where $\underline{\psi}_{j+1}=\overline{\psi}_{j}$ for $1\leq j\leq k$
and give a constructive existence proof for the degenerate case $\underline{\psi}_{1}=-1$ in arbitrary dimension. 
Troughout the paper we will illustrate the derived results by Matlab plots of the functions $L_{j}$ for the special cases we consider
 to visualize derived results.
 These plots shall also
lead to some insight into the dynamical behaviour of these functions in general.
 One should mention at this point that the plots 
often seem curved although the functions are piecewise linear, which is due to the non sufficient digital reslution, 
i.e. by zooming in one can see that they are indeed piecewise linear.  

\section{Results for Approximation constants}

\subsection{Estimates for $\omega,\widehat{\omega}$}
In the sequel let $s\geq 2$ be an integer and $\zeta_{i}\in{(0,1)}$ for $1\leq i\leq k$.
For each $1\leq i\leq k$ the non vanishing digits of the $s$-adic expansions of such $\zeta_{i}$ and $1-\zeta_{i}$ define 
two sequences $(a_{n}^{i,(s)})_{n\geq 1}$ and $(a_{n}^{\prime i,(s)})_{n\geq 1}$ by

\begin{eqnarray}
\zeta_{i}= \sum_{n\geq 1} \alpha_{n,i}^{(s)}s^{-a_{n}^{i,(s)}}, \qquad a_{1}^{i,(s)}<a_{2}^{i,(s)}<\ldots, \quad 0< \alpha_{n,i}\leq s-1  \label{eq:ttt}\\
1-\zeta_{i}= \sum_{n\geq 1} \beta_{n,i}^{\prime (s)}s^{-a_{n}^{\prime i,(s)}}, \qquad a_{1}^{\prime (s)}<a_{2}^{\prime (s)}<\ldots, \quad 0< \beta_{n,i}^{\prime}\leq s-1. \label{eq:tttt}
\end{eqnarray}
We call the sequence $a_{n,i}^{\prime (s)}$ the {\em dual expansion} of $\zeta_{i}$ in base $s$.
Set $(b_{n}^{(s)})_{n\geq 1}$ the monotonically ordered sequence of all $(a_{n,i}^{i,(s)})_{n\geq 1}$ and similarly
$(b_{n}^{\prime (s)})_{n\geq 1}$ the monotonically ordered sequence of all $(a_{n}^{\prime i,(s)})_{n\geq 1}$.
The following Theorem expresses the simultaneous approximation constant $\omega$ of $\zeta_{1},\zeta_{2},\ldots ,\zeta_{k}$ 
in terms of the $s$-adic presentations of $\zeta_{i}$ $(s=2,3,4,\ldots )$ by using these two orderd sequences. The proof 
 is introductory to the rest of the work and for this purpose quite detailed.

\begin{prop} \label{propo}
We have
\begin{eqnarray}  
\omega &\geq& \max\left\{\limsup_{\vert\vert(s,n)\vert\vert_{\infty}\to \infty} \frac {b_{n+1}^{(s)}- b_{n}^{(s)}-1}{b_{n}^{(s)}}, \limsup_{\vert\vert(s,n)\vert\vert_{\infty}\to \infty} \frac {b_{n+1}^{\prime(s)}- b_{n}^{\prime(s)}-1}{b_{n}^{\prime(s)}}\right\}, \quad \text{and} \label{eq:3} \\
\omega &\leq&  \max\left\{\limsup_{\vert\vert(s,n)\vert\vert_{\infty}\to \infty} \frac {b_{n+1}^{(s)}- b_{n}^{(s)}}{b_{n}^{(s)}}, \limsup_{\vert\vert(s,n)\vert\vert_{\infty}\to \infty} \frac {b_{n+1}^{\prime(s)}- b_{n}^{\prime(s)}}{b_{n}^{\prime(s)}}\right\}.   \label{eq:qpqp}
\end{eqnarray}
where $\vert\vert(A,B)\vert\vert_{\infty}:=\max\{\vert A\vert,\vert B\vert\}$ (or any other norm since they are all equivalent in $\mathbb{R}^{2}$).
\end{prop}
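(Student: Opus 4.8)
The plan is to prove both inequalities by comparing integer solutions of the system (\ref{eq:1}) with truncations of the $s$-adic expansions (\ref{eq:ttt}), (\ref{eq:tttt}). For the lower bound (\ref{eq:3}) I would fix an integer $s\ge2$ and an index $n\ge1$, put $X=x=s^{b_n^{(s)}}$, and let $y_i$ be $x$ times the partial sum of (\ref{eq:ttt}) over those terms with $a_m^{i,(s)}\le b_n^{(s)}$, which is a nonnegative integer. Every exponent occurring in any of the expansions (\ref{eq:ttt}) belongs to the merged sequence $(b_m^{(s)})_m$, so the first nonzero $s$-adic digit of $\zeta_i-y_i/x$ sits at a position $\ge b_{n+1}^{(s)}$; hence $0\le\zeta_i-y_i/x<s^{-b_{n+1}^{(s)}+1}$ and therefore $|\zeta_i x-y_i|<s^{\,b_n^{(s)}-b_{n+1}^{(s)}+1}=X^{-\nu}$ with $\nu=(b_{n+1}^{(s)}-b_n^{(s)}-1)/b_n^{(s)}$. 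This gives $\omega_1(s^{b_n^{(s)}})\ge(b_{n+1}^{(s)}-b_n^{(s)}-1)/b_n^{(s)}$, and running the identical construction with (\ref{eq:tttt}), using $\zeta_i x-(x-z_i)=-((1-\zeta_i)x-z_i)$, yields the analogue for the dual merged sequence $b^{\prime(s)}$. Since $b_n^{(s)}\ge1$ one has $s^{b_n^{(s)}},s^{b_n^{\prime(s)}}\to\infty$ whenever $\Vert(s,n)\Vert_\infty\to\infty$, so passing to the $\limsup$ and taking the maximum gives (\ref{eq:3}).

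For the upper bound (\ref{eq:qpqp}) I would start from a sequence $X_j\to\infty$ realising $\omega=\omega_1$ and integer vectors $(x_j,y_{1,j},\dots,y_{k,j})$ with $1\le x_j\le X_j$ and $|\zeta_i x_j-y_{i,j}|\le X_j^{-\nu_j}$, $\nu_j\to\omega$. Since $\zeta_i\in(0,1)$ and $X_j^{-\nu_j}\to0$, we have $0<y_{i,j}<x_j$ for $j$ large, so in base $s:=x_j$ each $y_{i,j}/x_j$ has a single-digit expansion. The bound $|\zeta_i-y_{i,j}/x_j|\le X_j^{-\nu_j}/x_j$ then forces, if $\zeta_i>y_{i,j}/x_j$, the second nonzero $x_j$-adic digit of $\zeta_i$ to lie at a position $\ge 1+\nu_j\log X_j/\log x_j\ge 1+\nu_j$ (here $x_j\le X_j$ is used), and if $\zeta_i<y_{i,j}/x_j$ the same statement holds for $1-\zeta_i$. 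If for infinitely many $j$ the inequality $\zeta_i>y_{i,j}/x_j$ holds for every $i$, then $b_1^{(x_j)}=1$ and $b_2^{(x_j)}\ge1+\nu_j$, so the pair $(s,n)=(x_j,1)$ contributes $b_2^{(x_j)}-1\ge\nu_j$ to the first $\limsup$ in (\ref{eq:qpqp}) and letting $j\to\infty$ concludes; in the complementary case with $\zeta_i<y_{i,j}/x_j$ for every $i$ the dual merged sequence does the same job.

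The hard part will be exactly the sign bookkeeping just swept under the rug: when the signs of $\zeta_i-y_{i,j}/x_j$ are mixed over $i$, merging the expansions of all the $\zeta_i$ (resp. of all the $1-\zeta_i$) can destroy the very gaps one wants to use, so the naive choice $s=x_j$ need not suffice. Resolving this — by showing the extremal approximations can be taken with $\zeta_i-y_{i,j}/x_j$ of one sign for all $i$ simultaneously, or by selecting the base $s$ from the joint digit pattern of $(\zeta_1,\dots,\zeta_k)$ and comparing an arbitrary denominator $x_j$ with the truncation denominators $s^{b_n^{(s)}}$ through the fact that distinct rationals with denominators $q_1,q_2$ differ by at least $1/(q_1q_2)$ — together with a careful accounting of the additive constant $1$ separating (\ref{eq:3}) from (\ref{eq:qpqp}), is the core of the argument; the truncation estimates underlying (\ref{eq:3}) are by contrast routine.
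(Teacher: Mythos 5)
Your treatment of the lower bound (\ref{eq:3}) is correct and is essentially identical to the paper's: truncate the expansions (\ref{eq:ttt}) (resp.\ (\ref{eq:tttt})) at a merged exponent, take $X=x=s^{b_{n}^{(s)}}$ (resp.\ $s^{b_{n}^{\prime(s)}}$), read the digit gap as an approximation exponent $(b_{n+1}^{(s)}-b_{n}^{(s)}-1)/b_{n}^{(s)}$, and pass to the $\limsup$; this half raises no objection.

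The upper bound (\ref{eq:qpqp}) is where your proposal stops short, and it stops exactly at the step you yourself call the core. Your strategy is the same as the paper's: reduce to best approximations with parameter equal to the denominator, use the base $s=x$, note that each $y_{i}/x$ is a one-digit number in base $x$, and convert $\vert\zeta_{i}-y_{i}/x\vert\le x^{-1-\nu}$ into a run of zero digits of $\zeta_{i}$ (approximation from below) or of $1-\zeta_{i}$ (from above) starting at position $2$, so that the pair $(s,n)=(x,1)$ feeds one of the two $\limsup$s. But when the signs of $\zeta_{i}-y_{i}/x$ are mixed in $i$, neither merged sequence helps: $b_{2}^{(x)}$ and $b_{2}^{\prime(x)}$ are minima over $i$ of the individual second exponents, so a single $\zeta_{i}$ approximated from the other side forces $b_{2}^{(x)}=2$, and dually, destroying the gap. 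You name two possible repairs (forcing uniform sign along an extremal sequence, or choosing the base from the joint digit pattern and comparing denominators) but carry out neither, so as submitted the proof of (\ref{eq:qpqp}) is incomplete. For context: the paper's own argument runs along precisely this route --- it chooses $s_{0}(X)$ as the integer $s\le X$ minimizing $\max_{j}\Vert s\zeta_{j}\Vert$, arranges $b_{1}^{(s)}=b_{1}^{\prime(s)}=1$ for $s$ large, and deduces (\ref{eq:anelim}) from (\ref{eq:merz}) by asserting that \emph{all} $s\zeta_{j}$, respectively all $s(1-\zeta_{j})$, begin with $\lfloor\nu\rfloor$ zero digits --- i.e.\ it tacitly assumes the one-sided configuration and never discusses the mixed case. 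So the difficulty you isolated is genuine and is the one point where the published proof itself is cavalier; but flagging a gap is not closing it, and your write-up does not yet prove the upper bound.
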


\begin{proof}
We first prove (\ref{eq:3}). By definition of $(b_{n}^{(s)})_{n\geq 1}$ as the mixed ordered sequence of the sequences
$(a_{n})_{n\geq 1}$, all numbers $\zeta_{1},\zeta_{2},\ldots ,\zeta_{k}$ will have zeros at the positions 
$b_{n}^{(s)}+1,b_{n}^{(s)}+2,\ldots ,b_{n+1}^{(s)}-1$ behind the comma in base $s$ for any $s\geq 2$. Since multiplication 
of $\zeta_{j}$ by $s^{b_{n}^{(s)}}$ only shifts the comma $b_{n}^{(s)}$ positions to the right, this means, for any $1\leq j\leq k$
all $s^{b_{n}^{(s)}}\zeta_{j}$ start with $b_{n+1}-b_{n}-1$ zeros in base $s$ behind the comma. For this reason 
any pair $(s,n)$ satisfies

\[
 \left\vert\left\vert s^{b_{n}(s)}\zeta_{j}\right\vert\right\vert=  \left\vert s^{b_{n}^{(s)}}\zeta_{j}-\left\lfloor s^{b_{n}^{(s)}}\zeta_{j}\right\rfloor \right\vert \leq s^{-(b_{n+1}^{(s)}-b_{n}^{(s)}-1)}
\]
for any $1\leq j\leq k$, where $\vert\vert .\vert\vert$ denotes the smallest distance of a real number to an integer. Analoguously, for all $1-\zeta_{j}$ and all pairs $(s,n)$ we have

\[
\left\vert\left\vert s^{b_{n}^{\prime (s)}}\zeta_{j}\right\vert\right\vert= \left\vert\left\vert s^{b_{n}^{\prime (s)}}(1-\zeta_{j})\right\vert\right\vert\leq \left\vert \left\lceil s^{b_{n}^{\prime (s)}}\zeta_{j} \right\rceil- s^{b_{n}^{\prime (s)}}\zeta_{j} \right\vert \leq s^{-(b_{n+1}^{\prime (s)}-b_{n}^{\prime (s)}-1)}.
\]
We conclude that for any pair $(s,n)$

\begin{equation} \label{eq:rmna}
 \max_{1\leq j\leq k} \left\vert\left\vert x\zeta_{j}\right\vert\right\vert \leq \max\left\{s^{-(b_{n+1}^{(s)}-b_{n}^{(s)}-1)},s^{-(b_{n+1}^{\prime (s)}-b_{n}^{\prime (s)}-1)}\right\}, \quad \text{with} \quad x= s^{b_{n}^{(s)}}\quad \text{or} \quad x=s^{b_{n}^{\prime (s)}}.
\end{equation}
Surely, $s^{b_{n}}\to \infty$ or $s^{b_{n}^{\prime}}\to \infty$ is equivalent to
  $\vert\vert (s,n)\vert\vert_{\infty} \to\infty$, and we claim that (\ref{eq:3}) follows directly from the definition of 
the approximation constant
 $\omega$. To see this we take a sequence of pairs $(n,s)$ with $\vert\vert(s,n)\vert\vert_{\infty} \to\infty$,
 for which
 $\frac{b_{n+1}^{(s)}- b_{n}^{(s)}-1}{b_{n}^{(s)}}$ or $\frac {b_{n+1}^{\prime(s)}- b_{n}^{\prime(s)}-1}{b_{n}^{\prime(s)}}$ 
tend to the $\limsup$-values on the right hand side of (\ref{eq:3}).
 Putting $X_{\sigma(n,s)}:=x_{\sigma(n,s)}:=s^{b_{n}^{(s)}}$ or $X^{\prime}_{\sigma(n,s)}:=x_{\sigma(n,s)}:=s^{b_{n}^{\prime (s)}}$, 
where $\sigma$ is an arbitrary  
bijection $\mathbb{N}\times \mathbb{N}\to \mathbb{N}$, we obtain a sequence of $X$-values and $x$-values that leads via (\ref{eq:rmna})
to an approximation constant $\omega$ in (\ref{eq:1})
at least as large as both $\limsup$-values. \\

\noindent To prove (\ref{eq:qpqp}), we first show the following assertion:
It suffices to prove, that for any sufficiently large real parameter $X$ there is a $s_{0}=s_{0}(X)$, such that

\begin{equation} \label{eq:anelim}
 \frac{b_{2}^{(s_{0})}-b_{1}^{(s_{0})}}{b_{1}^{(s_{0})}}\geq \nu \quad \text{or} \quad \frac{b_{2}^{\prime (s_{0})}-b_{1}^{\prime (s_{0})}}{b_{1}^{\prime (s_{0})}}\geq \nu,
\end{equation}
where $\nu=\nu(s)$ is the largest exponent for which 

\begin{equation} \label{eq:merz}
 \max_{1\leq j\leq k}\vert\vert \zeta_{j}s\vert\vert = s^{-\nu}
\end{equation}
holds for all $s\leq X$. \\
\noindent For any sequence $(X_{i})_{i\geq 1}$ 
let $(\nu_{i})_{i\geq 1}$ be the largest exponent, for which (\ref{eq:merz}) holds with $\nu_{i}$ in place of $\nu$ 
for all $s\leq X_{i}$.
The existence of $s_{0}=s_{0}(X)$ with (\ref{eq:anelim}) for any $X$ implies the existence of a sequence $(\beta_{i})_{i\geq 1}$ with
$\beta_{i}\geq \nu_{i}$ for all $i$ with $\beta_{i}$ of the shape $\frac{b_{2}^{(s_{0})}-b_{1}^{(s_{0})}}{b_{1}^{(s_{0})}}$
hence of the shape of the expressions involved in (\ref{eq:qpqp}) in the case $n=1$.
 By definition of $\omega$ we may choose the sequence $(X_{i})_{i\geq 1}$ such
that $\lim_{i\to\infty}\nu_{i}=\limsup_{i\to\infty}\nu_{i}=\omega$. Furthermore we can assume without loss of generality
that $(X_{i})_{i\geq 1}$ satisfies $s_{i}=X_{i}$ for any $i$,
as the exponent $\nu$ in the definition of $\omega$ in (\ref{eq:1}) for a fixed $x$ decreases with growing $X$.
 Combining all these observations we get
$\limsup_{i\to\infty} \beta_{i}\geq \omega$ where $\beta_{i}$ fits in the $\limsup$ term
of (\ref{eq:qpqp}) if we set $(s_{i},n_{i})=(s_{i},1)$, where $s_{i}$ plays the role of $s_{0}$ above, for $X=X_{i}$.\\ 
\noindent It remains to prove that for such sequences we have 
$\lim_{i\to\infty}\vert\vert (s_{i},n_{i})\vert\vert_{\infty}=\lim_{i\to\infty}\vert\vert (s_{i},1)\vert\vert_{\infty}=\limsup_{i\to\infty}s_{i}=\infty$.
This, however, is easy to see. As $s_{i}=X_{i}$ the definition of $s_{i}$ guarantees that the number
 $s_{i}=s_{i}(X_{i})$ minimizes $\max_{1\leq j\leq k} \vert\vert \zeta_{j}s_{i}\vert\vert$ among 
all $s_{i}\leq X_{i}$. On the other hand clearly $\liminf_{s\to\infty} \max_{1\leq j\leq k} \vert\vert \zeta_{j}\vert\vert=0$
for any $\mathbb{Q}$-linearly independent $\zeta_{1},\zeta_{2},\ldots,\zeta_{k}$
and so by definition of $(s_{i})_{i\geq 1}$ we also have $\lim_{i\to\infty}\max_{1\leq j\leq k} \vert\vert \zeta_{j}s_{i}\vert\vert=0$. 
Consequently the sequence $(s_{i})_{i\geq 1}$ 
cannot be bounded as only finitely many (strictly positive) values $\max_{1\leq j\leq k}\vert\vert \zeta_{j}s_{i}\vert\vert$ would
 appear, which proves $\limsup_{i\to\infty}s_{i}=\infty$.\\ 

\noindent To complete the proof we have to find a value $s_{0}=s_{0}(X)$ for which (\ref{eq:anelim}) holds.
Note first, that for sufficiently large $X$ und $s=s_{0}(X)$ we have $a_{1}^{j,(s)}=a_{1}^{\prime j,(s)}=1$.
Indeed for $s\geq \frac{1}{\min_{i}\vert\vert\zeta_{i}\vert\vert}$ and $i_{0}$ the index, for which the minimum is attained, 
 we have $\{s\zeta_{i_{0}}\} \notin{\{[0,\frac{1}{s}]\cup[\frac{s-1}{s},1)\}}$, so the first digit after the coma in base $s$
 is neither $0$ nor $(s-1)$.
So we can assume $X$ to be large enough to ensure $a_{1}^{j,(s)}=1$ for all $1\leq j\leq k$ and hence $b_{1}^{(s)}=1$ as well.
 It is now easy to see that putting $s_{0}:=s$ is an appropriate choice, since (\ref{eq:merz}) says that all $s\zeta_{j}$ 
respectively $s(1-\zeta_{j})$ start with $\lfloor\nu\rfloor$ digits zero in base $s$ behind the coma. This yields 
$\frac{a_{2}^{j,(s)}-a_{1}^{j,(s)}}{a_{1}^{j,(s)}}=a_{2}^{j,(s)}-1\geq \lfloor\nu\rfloor+1\geq \nu$ for all $1\leq j\leq k$,
therefore $\frac{b_{2}^{(s)}-b_{1}^{(s)}}{b_{1}^{(s)}}= \frac{\min_{j}a_{2}^{j,(s)}-a_{1}^{j,(s)}}{a_{1}^{j,(s)}} \geq \nu$
respectively the same facts for $a_{.}^{\prime .,(s)},b_{.}^{\prime (s)}$. 
\end{proof}
\noindent We easily deduce the following Corollary:

\begin{coro}  \label{korolar1}
We have
 \begin{equation*} 
\omega\geq  \max \left\{\sup_{s} \limsup_{n\geq 1} \frac{b_{n+1}^{(s)}-b_{n}^{(s)}-1} {b_{n}^{(s)}}, \sup_{s} \limsup_{n\geq 1} \frac{b_{n+1}^{\prime (s)}-b_{n}^{\prime (s)}-1} {b_{n}^{\prime (s)}}\right\}. 
\end{equation*}
\end{coro}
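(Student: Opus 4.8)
The plan is to obtain Corollary \ref{korolar1} directly from the lower bound (\ref{eq:3}) of Proposition \ref{propo}; the only issue is to compare the two-parameter limit superior appearing there, taken over $\vert\vert(s,n)\vert\vert_{\infty}\to\infty$, with the one-parameter limit superior over $n$ for each fixed base.

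First I would fix an arbitrary integer $s_{0}\geq 2$. Along the pairs $(s_{0},n)$ with $n\to\infty$ one has $\vert\vert(s_{0},n)\vert\vert_{\infty}=\max\{s_{0},n\}\to\infty$, so every number $\frac{b_{n+1}^{(s_{0})}-b_{n}^{(s_{0})}-1}{b_{n}^{(s_{0})}}$ occurs among those over which the limit superior on the right-hand side of (\ref{eq:3}) is formed. Hence
\[
\limsup_{\vert\vert(s,n)\vert\vert_{\infty}\to\infty}\frac{b_{n+1}^{(s)}-b_{n}^{(s)}-1}{b_{n}^{(s)}}\;\geq\;\limsup_{n\geq 1}\frac{b_{n+1}^{(s_{0})}-b_{n}^{(s_{0})}-1}{b_{n}^{(s_{0})}},
\]
and the analogous inequality holds verbatim with the dual sequences $b_{n}^{\prime(s)}$ in place of $b_{n}^{(s)}$.

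Since $s_{0}$ was arbitrary, I would then pass to the supremum over all integers $s_{0}\geq 2$ on the right-hand side of each of these two inequalities, their left-hand sides being independent of $s_{0}$. Substituting the two resulting bounds into (\ref{eq:3}) and taking the maximum of the two cases yields precisely
\[
\omega\geq\max\left\{\sup_{s}\limsup_{n\geq 1}\frac{b_{n+1}^{(s)}-b_{n}^{(s)}-1}{b_{n}^{(s)}},\ \sup_{s}\limsup_{n\geq 1}\frac{b_{n+1}^{\prime(s)}-b_{n}^{\prime(s)}-1}{b_{n}^{\prime(s)}}\right\},
\]
as claimed. The whole argument is just bookkeeping with suprema and limits superior; the only point to verify is that fixing the base still produces an admissible sequence of index pairs in (\ref{eq:3}), i.e.\ that $n\to\infty$ by itself already forces $\vert\vert(s_{0},n)\vert\vert_{\infty}\to\infty$, which is clear. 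Consequently there is no real obstacle, and no estimates beyond those already contained in Proposition \ref{propo} are needed.
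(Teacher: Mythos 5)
Your deduction is correct: since for fixed $s_{0}$ the pairs $(s_{0},n)$ with $n\to\infty$ satisfy $\vert\vert(s_{0},n)\vert\vert_{\infty}\to\infty$, each fixed-base $\limsup$ is dominated by the two-parameter $\limsup$ in (\ref{eq:3}), and taking the supremum over $s_{0}$ gives the stated bound. This is exactly the routine passage from Proposition \ref{propo} that the paper has in mind when it says the Corollary is easily deduced, so your argument matches the paper's (implicit) proof.
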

\noindent Similarly, we can give a lower bound for $\widehat{\omega}$ with respect to the $s$-adic representation of a real number.

\begin{prop}   \label{prop2}
For any $\zeta\in{\mathbb{R}}$ we have
\begin{equation*} 
\widehat{\omega}\geq \max \left\{\sup_{s} \liminf_{n\geq 1} \max_{1\leq j\leq n} \frac{b_{j+1}^{(s)}-b_{j}^{(s)}-1} {b_{n+1}^{(s)}}, \sup_{s} \liminf_{n\geq 1}  \max_{1\leq j\leq n} \frac{b_{j+1}^{\prime(s)}-b_{j}^{\prime(s)}-1} {b_{n+1}^{\prime(s)}}\right\}.
\end{equation*}
\end{prop}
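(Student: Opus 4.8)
The plan is to mimic the structure of the lower bound in Proposition \ref{propo}, but to track the \emph{uniform} exponent appearing in the definition of $\widehat\omega$ rather than the best-possible one. Fix a base $s\geq 2$. As established in the proof of Proposition \ref{propo}, for every $n\geq 1$ the integer $x=s^{b_n^{(s)}}$ satisfies
\[
 \max_{1\leq j\leq k}\left\vert\left\vert x\zeta_j\right\vert\right\vert\leq s^{-(b_{n+1}^{(s)}-b_n^{(s)}-1)},
\]
and analogously for $x=s^{b_n^{\prime(s)}}$ with the dual sequence. The point is that for a given bound $X$ on $x$, among all the admissible $x$-values $s^{b_1^{(s)}},s^{b_2^{(s)}},\ldots$ with $s^{b_n^{(s)}}\leq X$ we may choose the one giving the largest gap exponent $b_{j+1}^{(s)}-b_j^{(s)}-1$, i.e. replace the single gap by $\max_{1\leq j\leq n}(b_{j+1}^{(s)}-b_j^{(s)}-1)$ where $n$ is the largest index with $b_n^{(s)}\leq \log_s X$. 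This is precisely what the $\max_{1\leq j\leq n}$ in the statement encodes.

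First I would make the reduction to $n=1$ in the base variable precise exactly as in Proposition \ref{propo}: it suffices to exhibit, for each large $X$, a base $s_0=s_0(X)$ and an explicit solution of size $\leq X$ realizing the desired exponent, since $\widehat\omega=\liminf_{X\to\infty}\omega_1(X)$ only requires a good solution for \emph{every} large $X$, not for arbitrarily large ones. Then, for fixed $s$ and a real parameter $X$, set $n=n(s,X)$ to be the largest index with $s^{b_n^{(s)}}\leq X$; choose $j_0\leq n$ achieving $\max_{1\leq j\leq n}(b_{j+1}^{(s)}-b_j^{(s)}-1)$ and take $x=s^{b_{j_0}^{(s)}}\leq X$. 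Then $\max_j\vert\vert x\zeta_j\vert\vert\leq s^{-(b_{j_0+1}^{(s)}-b_{j_0}^{(s)}-1)}=x^{-\mu}$ where, using $x=s^{b_{j_0}^{(s)}}\leq s^{b_n^{(s)}}$, one gets the exponent
\[
 \mu\;\geq\;\frac{b_{j_0+1}^{(s)}-b_{j_0}^{(s)}-1}{b_n^{(s)}}\;=\;\frac{\max_{1\leq j\leq n}\bigl(b_{j+1}^{(s)}-b_j^{(s)}-1\bigr)}{b_n^{(s)}}.
\]
I would be slightly careful here: the natural denominator coming out of $x=s^{b_{j_0}^{(s)}}$ is $b_{j_0}^{(s)}$, which is $\leq b_n^{(s)}$, so bounding $\mu$ from below by this ratio is safe; and since $b_n^{(s)}\leq b_{n+1}^{(s)}$ the quantity in the statement, with $b_{n+1}^{(s)}$ in the denominator, is a further (legitimate) weakening. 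Passing to $X\to\infty$ forces $n=n(s,X)\to\infty$ (the sequence $b_n^{(s)}$ is strictly increasing and unbounded), so $\omega_1(X)\geq \liminf_{n}\max_{1\leq j\leq n}(b_{j+1}^{(s)}-b_j^{(s)}-1)/b_{n+1}^{(s)}$ asymptotically, whence $\widehat\omega$ dominates this liminf for each fixed $s$, hence the supremum over $s$; the dual sequence handles the second term, and one takes the max.

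The main obstacle, as in Proposition \ref{propo}, is the bookkeeping needed to pass from "a single base $s$" to the supremum over $s$ while respecting that $\widehat\omega$ is a $\liminf$ in $X$: one must ensure that for \emph{every} sufficiently large $X$ (not just infinitely many) there is a solution of the required quality. Fixing $s$ and letting $X$ range continuously, the relevant $x=s^{b_{j_0(s,X)}^{(s)}}$ changes only at the discrete values $X=s^{b_n^{(s)}}$, and between consecutive such values the bound $\max_{1\leq j\leq n}(b_{j+1}^{(s)}-b_j^{(s)}-1)/b_{n+1}^{(s)}$ stays valid, so this is manageable; the liminf over $n$ is exactly what survives this continuous-$X$ degradation. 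The estimate $a_1^{j,(s)}=b_1^{(s)}=1$ for large $s$, already proved in Proposition \ref{propo}, is not needed here since $n$ is allowed to grow, so the argument is in fact somewhat shorter than that of Proposition \ref{propo}.
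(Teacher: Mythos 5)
Your strategy matches the paper's: fix $s$, for each large $X$ take $n=n(s,X)$ with $s^{b_n^{(s)}}\le X<s^{b_{n+1}^{(s)}}$, choose $j_0\le n$ realizing the inner maximum, set $x=s^{b_{j_0}^{(s)}}$ and $y_t=\lfloor\zeta_t x\rfloor$, and use $\max_t\Vert x\zeta_t\Vert\le s^{-(b_{j_0+1}^{(s)}-b_{j_0}^{(s)}-1)}$; you are also right that the $s_0(X)$ device from Proposition~\ref{propo} plays no role here, and that varying $X$ continuously between $s^{b_n^{(s)}}$ and $s^{b_{n+1}^{(s)}}$ is harmless precisely because the statement is a $\liminf$ in $n$.

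There is, however, a real slip in how you extract the exponent. You compute the base-$x$ exponent $\mu=\frac{b_{j_0+1}^{(s)}-b_{j_0}^{(s)}-1}{b_{j_0}^{(s)}}$, observe that enlarging the denominator to $b_n^{(s)}$ and then $b_{n+1}^{(s)}$ only decreases the fraction, and from $\mu\ge\frac{b_{j_0+1}^{(s)}-b_{j_0}^{(s)}-1}{b_{n+1}^{(s)}}$ conclude $\omega_1(X)\ge\frac{b_{j_0+1}^{(s)}-b_{j_0}^{(s)}-1}{b_{n+1}^{(s)}}$. But $\omega_1(X)$ is the exponent \emph{relative to $X$}, and since $x\le X$ one always has $\omega_1(X)\le\mu$; lower-bounding $\mu$ therefore does not lower-bound $\omega_1(X)$, and the intermediate claim with $b_n^{(s)}$ in the denominator is in fact false for $\omega_1(X)$. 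What actually produces the $b_{n+1}^{(s)}$ is the \emph{upper} bound $X<s^{b_{n+1}^{(s)}}$: from $\log X<b_{n+1}^{(s)}\log s$ one gets
\[
 s^{-(b_{j_0+1}^{(s)}-b_{j_0}^{(s)}-1)}\le X^{-\frac{b_{j_0+1}^{(s)}-b_{j_0}^{(s)}-1}{b_{n+1}^{(s)}}},
\]
so the exhibited vector already satisfies the defining inequality of $\omega_1(X)$ with exponent $\frac{b_{j_0+1}^{(s)}-b_{j_0}^{(s)}-1}{b_{n+1}^{(s)}}$, and this is exactly the step the paper takes (``the right hand side inequalities follow from $X<s^{b_{n_0+1}}$''). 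Replace the detour through $\mu$ by this one-line estimate and the rest of your argument is correct.
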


\begin{proof}
By definition of the supremum it is suffient to prove 

\[
\widehat{\omega}\geq \mathscr{A}_{s}:=\max \left\{\liminf_{n\geq 1} \max_{1\leq j\leq n} \frac{b_{j+1}^{(s)}-b_{j}^{(s)}-1} {b_{n+1}^{(s)}}, \liminf_{n\geq 1}  \max_{1\leq j\leq n} \frac{b_{j+1}^{\prime(s)}-b_{j}^{\prime(s)}-1} {b_{n+1}^{\prime(s)}}\right\}
\]
for any base $s$ separately. So let $s$ be fixed and put $b_{n}^{(s)}=b_{n}$.
By definition of $\widehat{\omega}$ 
for arbitrary $\epsilon>0$ and sufficiently large $X=X(\epsilon)$ we have to find an approximation vector
 $(x,y_{1},\ldots ,y_{k})\in{\mathbb{Z}^{k+1}}$ with $x\leq X$ and 

\begin{equation} \label{eq:mileniza}
 \max_{1\leq j\leq k} \left\vert \zeta_{j}x-y_{j}\right\vert \leq X^{-\mathscr{A}_{s}+\epsilon}.
\end{equation}
For $\epsilon>0$ and large $X$ let $n_{0}$ be defined by $s^{b_{n_{0}}}\leq X< s^{b_{n_{0}+1}}$
respectively $s^{b_{n_{0}}^{\prime}}\leq X< s^{b_{n_{0}+1}^{\prime}}$. Put 
$x:=s^{b_{j}}$ respectively $x:=s^{b_{j}^{\prime}}$ where $j$ is the index, such that the inner maximum 
from the definition of $\mathscr{A}_{s}$ is attained for the given $n_{0}$.   
By definition of $(b_{n})_{n\geq 1}$ as the mixed sequence the first $b_{j+1}-b_{j}-1$ positions behind the comma
of each $\zeta_{t}s^{b_{j}}, 1\leq t\leq k$, respectively $(1-\zeta_{t})s^{b_{j}}, 1\leq t\leq k$,
 are zeros in base $s$. We infer that putting $y_{t}:=\lfloor \zeta_{t}x\rfloor$
for all $1\leq t\leq k$ respectively $y_{t}:= \lceil \zeta_{t}x\rceil$ for all $1\leq t\leq k$ we have

\begin{eqnarray}
  \max_{1\leq t\leq k}\left\vert\left\vert \zeta_{t}x\right\vert\right\vert= \max_{1\leq t\leq k} \left\vert \zeta_{t}x-y_{t} \right\vert \leq s^{b_{j}-b_{j+1}+1}&\leq& X^{\frac{b_{j}-b_{j+1}+1}{b_{n_{0}+1}}}  \label{eq:rumenige}  \\
\text{resp.} \quad \max_{1\leq t\leq k}\left\vert\left\vert \zeta_{t}x\right\vert\right\vert= \max_{1\leq t\leq k} \left\vert (1-\zeta_{t})x-y_{t} \right\vert \leq s^{b_{j}^{\prime}-b_{j+1}^{\prime}+1}&\leq& X^{\frac{b_{j}^{\prime}-b_{j+1}^{\prime}+1}{b_{n_{0}^{\prime}+1}}}.  \label{eq:rummenige}
\end{eqnarray}
For the left hand side inequalties compare the proof of Proposition \ref{prop1}, the right hand side inequalities follow 
from $X<s^{b_{n_{0}+1}}$ and $X<s^{b_{n_{0}+1}^{\prime}}$ respectively.
As (\ref{eq:rumenige}),(\ref{eq:rummenige}) holds for every large $X$, we may let $n$ tend to $\infty$
to conclude that (\ref{eq:mileniza}) has a solution for all sufficiently large $X$. Hence the exponent of $X$ in
(\ref{eq:rumenige}) and (\ref{eq:rummenige}) respectively is larger than $\mathscr{A}_{s}-\epsilon$. 
\end{proof}
\noindent Now we turn to simultaneous approximation of vectors $(\zeta_{1},\zeta_{2},\ldots ,\zeta_{k})$ with good approximation 
in one fixed simultaneous base $s\geq 2$, and we skip the dual expansion. We want to use Corollary \ref{korolar1} and 
Proposition \ref{prop2} to give estimates for the simultaneous approximation constants $\omega,\widehat{\omega}$. 
With respect to the notation above, meaning that the $s$-adic digits of $\zeta_{i}$ are given by $(a_{n,i}^{(s)})_{n\geq 1}$ as
 in (\ref{eq:ttt}) and the ordered mixed sequence by $(b_{n}^{(s)})_{n\geq 1}$, we get

\begin{lemm}  \label{lemma1}
 For any $s\geq 2$ we have
 \[
\min_{i} \liminf_{n\geq 1} \left(\frac{a_{n+1}^{i,(s)}}{a_{n}^{i,(s)}}\right)^{1/k}\leq \limsup_{n\geq 1} \frac{b_{n+1}^{(s)}} {b_{n}^{(s)}} \leq \min_{i} \limsup_{n\geq 1}\frac{a_{n+1}^{i,(s)}}{a_{n}^{i,(s)}}.
\]
\end{lemm}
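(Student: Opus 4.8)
Throughout, fix the base $s$ and drop the superscript $(s)$, writing $a_n^i$ for the positions of the non-zero $s$-adic digits of $\zeta_i$ and $b_n$ for their ordered merge; the only structural facts the argument needs are that each $(a_n^i)_n$ and $(b_n)_n$ is a strictly increasing sequence of positive integers with $b_n\to\infty$, and that $\{a_\ell^i:\ell\ge 1\}\subseteq\{b_p:p\ge 1\}$ for every $i$. Since $x\mapsto x^{1/k}$ is increasing, the left-hand side equals $(\min_i\mu_i)^{1/k}$ with $\mu_i:=\liminf_n a_{n+1}^i/a_n^i$. The plan is to prove the upper and lower bounds separately by interleaving arguments that compare one single sequence $(a_n^i)$ with the merge $(b_n)$.

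\emph{Upper bound.} Fix $i$. For all large $n$ there is a unique $m=m(n)$ with $a_m^i\le b_n< a_{m+1}^i$, and $m(n)\to\infty$ as $n\to\infty$ because $a_{m(n)+1}^i>b_n\to\infty$. Since $a_{m+1}^i$ lies in the merge and exceeds $b_n$, it is $\ge b_{n+1}$, so $b_{n+1}/b_n\le a_{m+1}^i/a_m^i$. Passing to $\limsup$ in $n$ (and using that a limsup taken along indices $m(n)\to\infty$ is at most the full limsup) gives $\limsup_n b_{n+1}/b_n\le\limsup_m a_{m+1}^i/a_m^i$ for each $i$; taking the minimum over $i$ yields the upper bound.

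\emph{Lower bound.} The combinatorial core is: among any $k+1$ consecutive merged terms $b_n<b_{n+1}<\dots<b_{n+k}$ there are an index $i$ and a $p\in\{0,\dots,k-1\}$ with $b_{n+p}=a_\ell^i$ and $b_{n+p+1}=a_{\ell+1}^i$ for some $\ell$. Indeed, by the pigeonhole principle some label $i$ is carried by at least two of the $k+1$ terms; because the merge contains \emph{every} $a_\ell^i$, the terms carrying label $i$ form a block of consecutive members of $(a_\ell^i)_\ell$ that also occupy consecutive positions in the merge, so in particular two adjacent members $a_\ell^i,a_{\ell+1}^i$ appear as adjacent merged terms. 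Now assume for contradiction that $\beta:=\limsup_n b_{n+1}/b_n$ is finite and $\beta<(\min_i\mu_i)^{1/k}$. Fix $\varepsilon>0$ with $\gamma:=(\beta+\varepsilon)^k<\min_i\mu_i$; then $b_{n+k}/b_n\le\gamma$ for all large $n$. Applying the combinatorial core to each such window, and using $b_{n+p+1}/b_{n+p}\le b_{n+k}/b_n$, we obtain for every large $n$ some $i$ and some $\ell=\ell_n$ (with $\ell_n\to\infty$) satisfying $a_{\ell+1}^i/a_\ell^i\le\gamma$. Pigeonhole over the finitely many labels produces a single $i$ for which $a_{\ell+1}^i/a_\ell^i\le\gamma$ holds for arbitrarily large $\ell$, hence $\mu_i\le\gamma<\mu_i$, a contradiction.

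I expect the one genuinely delicate point to be the combinatorial core of the lower bound: upgrading "$i$ occurs at least twice in the window" to "two \emph{consecutive} terms of $(a_\ell^i)_\ell$ occur as \emph{consecutive} terms of the merge", which is exactly where completeness of the merge is used and where the exponent $1/k$ enters (it can take up to $k$ merge-steps to advance a single sequence by one index). Everything else is routine bookkeeping with $\liminf$/$\limsup$ together with the trivial facts $b_n\to\infty$ and $b_{n+1}>b_n$.
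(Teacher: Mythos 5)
Your upper bound is fine; it is exactly the routine comparison the paper dismisses as ``trivial''. The problem sits in the lower bound, at precisely the step you yourself single out as the delicate one: the ``combinatorial core'' is false as stated. From ``label $i$ is carried by two of the $k+1$ window terms'' you cannot conclude that two consecutive members $a_{\ell}^{i},a_{\ell+1}^{i}$ of the $i$-th sequence occur as \emph{adjacent} terms of the merge; the other sequences may interlace them. Take $k=2$ with $a^{1}=(2,8,32,128,\dots)$ and $a^{2}=(4,16,64,\dots)$, so the merge is $2,4,8,16,32,\dots$ with labels alternating $1,2,1,2,\dots$: every window of three consecutive merged terms contains two terms of one sequence, yet no two adjacent merged terms ever carry the same label. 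Note that this interlaced pattern is exactly the shape of the sequences in (\ref{eq:qq}) and (\ref{eq:frure}) that the paper feeds into this lemma, so the failure is the generic situation, not an exotic one. In particular your intermediate assertion that the label-$i$ terms ``occupy consecutive positions in the merge'' is wrong, and with it the stated justification of $a_{\ell+1}^{i}/a_{\ell}^{i}\le b_{n+k}/b_{n}$ collapses as written.

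Fortunately the conclusion you draw from the core is true and needs much less: if $b_{n+p_{1}}=a_{\ell}^{i}$ and $b_{n+p_{2}}=a_{\ell'}^{i}$ with $0\le p_{1}<p_{2}\le k$ and $\ell'>\ell$, then $a_{\ell+1}^{i}\le a_{\ell'}^{i}\le b_{n+k}$, hence $a_{\ell+1}^{i}/a_{\ell}^{i}\le b_{n+k}/b_{n}\le\gamma$ directly; adjacency in the merge is irrelevant, only membership of both values in the window matters (this is where completeness of the merge is genuinely used). With that one-line repair the rest of your contradiction argument (pigeonhole over the $k$ labels, $\ell_{n}\to\infty$, then $\mu_{i}\le\gamma<\mu_{i}$) is sound, and it is essentially the contrapositive of the paper's proof: there one fixes $\epsilon>0$, notes that the multiplicative window $[a_{m}^{i_{0}},(C-\epsilon)a_{m}^{i_{0}}]$ contains at most one term of each sequence, hence at most $k$ merged terms, and pigeonholes on the at most $k$ consecutive merge ratios whose product exceeds $C-\epsilon$ to extract one ratio $\ge(C-\epsilon)^{1/k}$. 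It is the same counting of $k$ sequences against $k$ merge steps, and the same point at which the exponent $1/k$ enters; your version just runs it by contradiction rather than directly.
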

 
\begin{proof}
The right hand inequality is trivial. For the left hand inequality keep $s$ fixed and put
 $C:=\min_{i} \liminf_{n\geq 1} \frac{a_{n+1}^{i,(s)}}{a_{n}^{i,(s)}}$ and choose $n_{0}$ large enough, such that for all $i$ and 
all $n\geq n_{0}$ we have  
\[
\frac{a_{n+1}^{i}}{a_{n}^{i}}\geq C-\epsilon
\]
($s$ has been dropped in the notation).
For arbitrary $b_{n}, n\geq n_{0}$, there exist $m,i_{0}$ with $a_{m}^{i_{0}}= b_{n}$ by definition of $(b_{n})_{n\geq 1}$. The 
interval $[a_{m}, (C-\epsilon)a_{m}]$ contains at most $k$ numbers $b_{i}$, since it contains at most one element of
every sequence $(a_{n}^{i})_{n\geq 1}$ for $1\leq i\leq k$.
By the pigeon hole principle there are two numbers $b_{j},b_{j+1}$ in the interval $[a_{m}^{i_{0}},a_{m+1}^{i_{0}}]$ 
whose quotient $\frac{b_{j+1}}{b_{j}}$ 
is at least $(C- \epsilon)^{1/k}$. The lemma follows with $\epsilon \to 0$.
\end{proof}
\noindent In combination with Corollary \ref{korolar1} we observe

\[
 \omega \geq \min_{i} \liminf_{n\geq 1} \left(\frac{a_{n+1}^{i,(s)}}{a_{n}^{i,(s)}}\right)^{1/k} -1, \qquad \forall s\geq 2.
\]  
Getting lower bounds for $\widehat{\omega}$ by just considering the sequences $a_{n+1}^{i,(s)}$ 
 is more complicated and to some extent impossilbe as we will see in Corollary \ref{korol1}. In fact even if 

\[
 \lim_{n\to\infty} \frac{a_{n+1}^{i,(s)}}{a_{n}^{i,(s)}} =\infty, \qquad 1\leq i\leq k,
\]
we can have $\widehat{\omega}=1/k$, which is the weakest lower bound for $\widehat{\omega}$ by (\ref{eq:2001neu}). We only mention that if we 
construct sequences $a_{n+1}^{i,(s)}$ for which $\lim_{n\to\infty} \frac{b_{n+1}^{(s)}}{b_{n}^{(s)}}=\infty$, Proposition \ref{prop2} yields

\[
 \widehat{\omega}\geq \liminf_{n\geq 1}\frac{b_{n+1}^{(s)}-b_{n}^{(s)}}{b_{n+1}^{(s)}} = 1- \frac {1}{\liminf_{n\geq 1}\frac{b_{n+1}^{(s)}}{b_{n}^{(s)}}}= 1,
\]
and consequently $\widehat{\omega}=1$ in view of (\ref{eq:2001neu}).  

\subsection{The case $\omega=\infty$}
In the following theorem, we 
compute the classical approximation constants $\omega_{j},\widehat{\omega}_{j}$ for a special type of Liouville numbers 
$\zeta_{1},\zeta_{2},\ldots ,\zeta_{k}$, whose
 best approximation vectors $(x,y_{1},y_{2},\ldots ,y_{k})$ to $(\zeta_{1},\zeta_{2},\ldots ,\zeta_{k})$
are easy to guess. The main arguments of the compilation will be carried out in the proofs of the following theorems.

\begin{theo}  \label{satz1}
 Let $k$ be a positive integer and for $1\leq j\leq k$ let $\zeta_{j}= \sum_{n\geq 1} \frac{1}{q_{n,j}}$, where

\begin{equation} \label{eq:qq}
 q_{1,1}<q_{1,2}< \ldots <q_{1,k}<q_{2,1}<q_{2,2}<\ldots q_{2,k}<q_{3,1}<\ldots
\end{equation}
are natural numbers, such that 

\begin{equation} \label{eq:erwin}
 q_{n,j}\vert q_{n,j+1} \quad \text{for}\quad 1\leq j\leq k-1 \quad \text{and} \quad q_{n,k}\vert q_{n+1,1} \quad \text{for all} \quad n\geq 1
\end{equation}
and such that

\begin{eqnarray}
 \lim_{n\to\infty} \frac{\log(q_{n+1,1})-\log(q_{n,k})}{\log(q_{n+1,k})}&=& \eta_{1},   \label{eq:nurrrrr}   \\ 
 \lim_{n\to\infty} \frac{\log(q_{n+1,i})-\log(q_{n+1,i-1})}{\log(q_{n+1,k})}&=& \eta_{i}, \quad 2\leq i\leq k,   \label{eq:nurrrr} \\
 \lim_{n\to\infty} \frac{\log(q_{n+1,1})}{\log(q_{n,k})}&=& \eta_{k+1}=\infty,  \label{eq:nurrr} 
\end{eqnarray}
where $\eta=(\eta_{1},\eta_{2},\ldots ,\eta_{k+1})\in{\mathbb{R}^{k}\times\overline{\mathbb{R}}}$ satisfy

\begin{eqnarray}
&&  \eta_{1}+\eta_{2}+\cdots + \eta_{k}=1        \label{eq:19neu} \\
&& \eta_{k+1}>\eta_{k}\geq \eta_{k-1} \geq \ldots \geq \eta_{1}> 0   \label{eq:18neu} \\
&& \eta_{k+1}= \infty.     \label{eq:17neu}  
\end{eqnarray}
Then the classical approximation constants relative to the vector $\boldsymbol{\zeta}=(\zeta_{1},\zeta_{2},\ldots ,\zeta_{k})$ are given by

\begin{eqnarray*}
\omega_{1} & = & \eta_{k+1}= \infty=:\wp_{1}(\eta)     \\
\omega_{2} & = & \max \left\{\frac{\eta_{k}}{\eta_{k}+\eta_{k-1}+\cdots +\eta_{1}}, \frac{\eta_{k-1}}{\eta_{k-1}+\eta_{k-2}+\cdots +\eta_{1}},\ldots ,\frac{\eta_{1}}{\eta_{1}}\right\}=:\wp_{2}(\eta)   \\
\omega_{3} & = & \max \left\{\frac{\eta_{k-1}}{\eta_{k}+\eta_{k-1}+\cdots +\eta_{1}}, \frac{\eta_{k-2}}{\eta_{k-1}+\eta_{k-2}+\cdots +\eta_{1}},\ldots ,\frac{\eta_{1}}{\eta_{2}+\eta_{1}}\right\}=:\wp_{3}(\eta)   \\
\vdots && \vdots                \\
\omega_{k+1} & = & \frac{\eta_{1}}{\eta_{k}+\eta_{k-1}+\cdots +\eta_{1}}=:\wp_{k+1}(\eta).
\end{eqnarray*}
and 

\begin{eqnarray*}
\widehat{\omega}_{1}&=& \min \left\{\frac{\eta_{k}}{\eta_{k}+\eta_{k-1}+\cdots +\eta_{1}}, \frac{\eta_{k-1}}{\eta_{k-1}+\eta_{k-2}+\cdots +\eta_{1}},\ldots ,\frac{\eta_{1}}{\eta_{1}}\right\}=:\widehat{\wp}_{1}(\eta)   \\
\widehat{\omega}_{j}&=& 0, \qquad 2\leq j\leq k+1.
\end{eqnarray*}
\end{theo}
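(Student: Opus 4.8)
The plan is to compute the approximation constants directly from the structure of the best approximation vectors, which are forced by the divisibility conditions \eqref{eq:erwin}. First I would identify the natural candidates for good approximation vectors: because $q_{n,j}\mid q_{n,j+1}$ and $q_{n,k}\mid q_{n+1,1}$, for each pair $(n,j)$ the integer $x=q_{n,j}$ makes $x\zeta_i$ nearly integral for all $i$, with the error governed by the tail $\sum_{m\geq n, (m,\ell)>(n,j)} 1/q_{m,\ell}$, whose dominant term is $1/q_{n,j+1}$ (or $1/q_{n+1,1}$ if $j=k$). So the vector $\bx_{n,j}:=(q_{n,j},\lfloor q_{n,j}\zeta_1\rfloor,\dots,\lfloor q_{n,j}\zeta_k\rfloor)$ satisfies $\max_i|q_{n,j}\zeta_i-y_i|\asymp q_{n,j}/q_{n,j+1}$. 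I would then argue, in the style of the proofs of Propositions \ref{propo} and \ref{prop2}, that these $\bx_{n,j}$ are essentially the \emph{only} good approximations: any vector $(x,\by)$ with $x$ not close to some $q_{n,j}$ cannot do better, because between consecutive $q$'s the partial sums of $\zeta_i$ force $\|x\zeta_i\|$ to be bounded below. This reduces the whole computation to bookkeeping with the exponents $\log q_{n,j}$.

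Next I would introduce the normalized exponents. Write $X=q_{n,j}$; then $\log X \sim (\eta_1+\dots+\eta_j)\log q_{n,k}$ by telescoping \eqref{eq:nurrrr} and using \eqref{eq:nurrrrr} for the first block (with the convention that the block from $q_{n-1,k}$ to $q_{n,1}$ contributes $\eta_1\log q_{n,k}$, negligibly perturbed since $\eta_{k+1}=\infty$ makes $\log q_{n,k}$ dominate $\log q_{n-1,k}$ entirely). The approximation exponent attached to $\bx_{n,j}$ is $\nu_{n,j} = \log(q_{n,j+1}/q_{n,j})/\log q_{n,j} \to \eta_{j+1}/(\eta_1+\dots+\eta_j)$ for $j<k$, and for $j=k$ it is $\log(q_{n+1,1}/q_{n,k})/\log q_{n,k}\to \eta_{k+1}=\infty$, because of \eqref{eq:nurrr}. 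Now the successive-minima description: to get $\omega_m$ one needs $m$ linearly independent approximation vectors of a given quality. Since $\bx_{n,1},\dots,\bx_{n,k+1}$ (reading $\bx_{n,k+1}:=\bx_{n+1,1}$) within one "period" are linearly independent (their first coordinates $q_{n,1}<\dots<q_{n,k}<q_{n+1,1}$ with the nested divisibility make the matrix triangular and nonsingular), the $m$-th best quality available at scale $X$ is governed by the $m$-th largest of the relevant ratios. Carefully: at parameter $X$ just below $q_{n+1,k}$, the available vectors with $x\le X$ of decent quality are $\bx_{n,1},\dots,\bx_{n,k}$ and the earlier-period ones; the $m$-th linearly independent one, ordered by quality, yields $\omega_m = \limsup$ of the $m$-th entry, which after the telescoping becomes exactly the maximum over the cyclic shifts $\eta_{k+2-m}/(\eta_1+\dots+\eta_{k+1-m}), \dots$ displayed as $\wp_m(\eta)$. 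The analogous $\liminf$ computation, where one must produce $m$ independent vectors for \emph{every} large $X$, gives $\widehat\omega_1=\min(\dots)=\widehat\wp_1(\eta)$ (the worst case is $X$ just below some $q_{n+1,1}$, where the freshest vector $\bx_{n,k}$ has quality tending to $\eta_k/(\eta_1+\dots+\eta_k)$ and this is the bottleneck), and $\widehat\omega_j=0$ for $j\ge 2$ because for $X$ slightly above $q_{n,1}$ there is only \emph{one} good vector available, so a second independent vector forces exponent $\to 0$.

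I would organize the write-up as: (i) a lemma pinning down $\|q_{n,j}\zeta_i\|$ up to constants; (ii) a lemma showing no other $x$ beats these (a "gap" argument: if $q_{n,j}<x<q_{n,j+1}$ and $x$ is not a multiple-plus-small-perturbation of $q_{n,j}$, then some $\|x\zeta_i\|\gg 1/q_{n,j+1}$, handled by examining the first few digits in the relevant base or directly via the three-distance flavor of the earlier proofs); (iii) a linear-independence check for the period vectors via the triangular structure; (iv) the exponent translation via \eqref{eq:nurrrrr}--\eqref{eq:nurrr} and \eqref{eq:19neu}--\eqref{eq:17neu}; (v) assembling $\omega_m$ and $\widehat\omega_m$ from $\limsup/\liminf$ over the periodically recurring list of ratios, then simplifying the maxima/minima to the stated $\wp_m(\eta), \widehat\wp_m(\eta)$. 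The main obstacle is step (ii): one must be sure that an adversarial $x$ lying strictly between the $q_{n,j}$'s — or a clever integer combination — cannot produce a qualitatively better simultaneous approximation; controlling this uniformly requires exploiting that the $q_{n,j}$ grow fast enough (guaranteed by $\eta_{k+1}=\infty$ together with $\eta_i>0$) so that the tail sums are genuinely dominated by their leading term and the "good" $x$ are isolated. Once that rigidity is established, everything else is the linear-algebra triangularity and the limit computations, which are routine given \eqref{eq:nurrrrr}--\eqref{eq:17neu}.
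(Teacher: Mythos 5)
Your lower-bound half (the explicit vectors $x=q_{n,j}$, the error $\asymp q_{n,j}/q_{n,j+1}$ forced by \eqref{eq:erwin}, the triangular determinant check for independence within a period, and the telescoping $\log q_{n,j}\sim(\eta_1+\cdots+\eta_j)\log q_{n,k}$) matches the paper's argument. The genuine gap is in the upper bounds for $\omega_j$ with $j\geq 2$: your rigidity lemma (ii) is a statement about a \emph{single} adversarial $x$, and even if proved it does not yield what you use in step (v), namely that the $m$-th best quality among $m$ \emph{linearly independent} vectors at scale $X$ is governed by the $m$-th ratio. The danger is not an $x$ lying between the $q$'s but a family of vectors whose first coordinates are all highly divisible (multiples of, or combinations built on, the largest available $q$'s): each such vector individually has excellent quality, so no single-$x$ gap lemma excludes it. What is needed, and what the paper supplies as the second step of its proof, is a converse rank argument: if all $j$ first coordinates $x^{(i)}$ are divisible by $c_{m+2-j}$ (notation of the paper), then the coordinates $y_g^{(i)}$ for $g$ in a suitable index set are exact proportional multiples of $x^{(i)}$, so the first $k-j+3$ columns of the $j\times(k+1)$ matrix have rank $1$ and the vectors are dependent; hence at least one of any $j$ independent vectors has low divisibility level, and only \emph{then} does the gap estimate (the paper's first step, with the decomposition $x=x_1+x_2$, $c_{g-1}\mid x_1$, $c_g\mid x_2$) bound its error below by roughly $c_{g-1}/c_g$. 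This family-level statement is not the "linear-algebra triangularity" of your constructed vectors and is not routine; without it your assembly of $\omega_m$ and of $\widehat\omega_1$ is unsupported.

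Two smaller points. First, your reason for $\widehat\omega_j=0$ ($j\geq 2$) is misplaced: for $X$ slightly above $q_{n,1}$ the second independent vector $(q_{n-1,k},\lfloor q_{n-1,k}\zeta_1\rfloor,\ldots)$ has exponent $(\log q_{n,1}-\log q_{n-1,k})/\log q_{n,1}\to 1$ by \eqref{eq:nurrr}, not $\to 0$; the correct bottleneck windows are $X$ just below $q_{n+1,1}$, and proving that the second independent vector is then poor again requires the rank argument above. The paper avoids this entirely by deducing $\widehat\omega_j=0$ for $j\geq 2$ from $\omega=\infty$: this gives $\underline{\psi}_1=-1$ by \eqref{eq:2}, and the Minkowski relation \eqref{eq:zokz} then forces $\overline{\psi}_2=\tfrac1k$, i.e. $\omega_j$ restricted appropriately, which translates back through \eqref{eq:2}. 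Second, "$x$ not a multiple-plus-small-perturbation of $q_{n,j}$" is not the right dichotomy; the relevant invariant is the largest index $g$ with $c_{g-1}\mid x$, and the error is controlled in terms of $c_{g-1}/c_g$, which is what makes the later bookkeeping match the maxima defining $\wp_j(\eta)$.
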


\begin{proof}
We start with the constants $\omega_{j}$ and intend to prove the inequalities
 $\omega_{j}\geq \wp_{j}(\eta)$ and $\omega_{j}\leq \wp_{j}(\eta)$ seperately for $1\leq j\leq k+1$. \\

$\underline{\omega_{j}\geq \wp_{j}(\eta)}:$   \\
Let $\wp_{j,l}$ be the $l$-th quotient of the maximum labeled $\wp_{j}(\eta)$. We give a detailed proof
 of $\omega_{j}\geq \wp_{j,1}=\eta_{k+2-j}$ and then
mention how to generalize the proof to derive all the other inequalities $\omega_{j}\geq \wp_{j,l}$ for $l\neq 1$.\\
To prove $\omega_{j}\geq \wp_{j,1}$, we will construct $j$ sequences of approximation vectors

\[
 \left(x^{(1,i)},y_{1}^{(1,i)},\ldots  ,y_{k}^{(1,i)}\right)_{i\geq 1},
\left(x^{(2,i)},y_{1}^{(2,i)},\ldots y_{k}^{(2,i)}\right)_{i\geq 1},\ldots \left(x^{(j,i)},y_{1}^{(j,i)},\ldots ,y_{k}^{(j,i)}\right)_{i\geq 1}
\]
which are linearly independent for each fixed $i\in{\mathbb{N}}$ and such that
$\omega_{j}=\eta_{k+2-j}$ follows for $i\to\infty$. Indeed for $p$ in a $j$-element subset of $\{1,2,\ldots ,k\}$ and 
any $\epsilon>0$ we claim for $i$ sufficiently large 

\[
 \max_{1\leq t\leq k} -\frac{\log \left(\left\vert \zeta_{t}x^{(p,i)}-y_{t}^{(p,i)}\right\vert\right)}{\log\left(x^{(p,i)}\right)}\geq \eta_{k+2-j}-\epsilon.
\]
In analogy to the definition of $(b_{n}^{s})_{n\geq 1}$ in subsection 2.1 let $(b_{n})_{n\geq 1}$ be the combined sequence of the logarithms of the integers $q_{n,j}$ in increasing
order, which means for any nonnegative integer $M$ and $N\in{\{1,2,\ldots ,k\}}$ we have $b_{kM+N}=\log(q_{M,N})$. By (\ref{eq:nurrr})
we have $\limsup \frac{b_{n+1}}{b_{n}}=\infty$ and thus by putting the first approximation vector
$(q_{n,i},\lfloor \zeta_{1}q_{n,i}\rfloor,\ldots, \lfloor \zeta_{k}q_{n,i}\rfloor)$ with arbitrary $i$ we may let $n$ tend to infinity,
to obtain $\omega=\infty$: indeed applying (\ref{eq:erwin}) we derive that all the remainder terms

\[
 \vert\vert \zeta_{j}q_{n,i}\vert\vert= \sum_{l:q_{n,l}>q_{n,i}} \frac{1}{q_{l,j}}q_{n,i}\leq 2\frac{q_{n,i}}{q_{n,i+1}}
\]
are small due to (\ref{eq:nurrrrr})-(\ref{eq:nurrr}).
In order to estimate $\omega_{j}$ for $j\geq 2$ we construct a sequence of parameters $X$ and approximation vectors
$(x,y_{1},\ldots,y_{k})$ with $x\leq X$ explicitely.
For the fixed choice $X^{(n)}:= q_{n,k}$ we will get $\omega_{j}\geq \wp_{j,1}$. To see this let $x^{(1,n)}:=X{(n)}$ and 
$y_{t}^{(1,n)}:=\lfloor x^{(1,n)}\zeta_{t}\rfloor$ for $1\leq t\leq k$. Define the second approximation vector by 
taking $x^{(2,n)}=q_{n,k-1}$ and again
$y_{t}^{(2,n)}:=\lfloor x^{(2,n)}\zeta_{t}\rfloor$. By means of (\ref{eq:nurrrrr}),(\ref{eq:nurrrr}),(\ref{eq:nurrr})
 and the definition of $\wp_{2}(\eta)$ we claim that for each $C<\eta_{k+2-2}=\eta_{k}$ 

\begin{equation} \label{eq:mota}
 \left\vert \zeta_{t}x^{(2,n)}- y_{t}^{(2,n)}\right\vert \leq \left(x^{(1,n)}\right)^{-C}= \left(X^{(n)}\right)^{-C}
\end{equation}
holds for $n=n(C)$ large enough. This follows from 

\begin{eqnarray*}
 \left\vert \zeta_{t}x^{(2,n)}- y_{t}^{(2,n)}\right\vert&=& \left\vert q_{n,k-1}\zeta_{t}- \lfloor q_{n,k-1}\zeta_{t}\rfloor \right\vert=\sum_{i=n+1}^{\infty} \frac{q_{n,k-1}}{q_{i,t}}, \quad 1\leq t\leq k-1, \\
 \left\vert \zeta_{k}x^{(2,n)}- y_{k}^{(2,n)}\right\vert&=& \left\vert q_{n,k-1}\zeta_{k}- \lfloor q_{n,k-1}\zeta_{k}\rfloor \right\vert=\frac{q_{n,k-1}}{q_{n,k}}+\sum_{i=n+1}^{\infty}\frac{q_{n,k-1}}{q_{i,k}} 
\end{eqnarray*}
in view of the definition of $\zeta_{t}$ and our assumption (\ref{eq:erwin}). In every case
all the values of $\zeta_{t}x^{(2,n)}-y_{t}^{(2,n)}$ for $1\leq t\leq k$ are bounded by
$\frac{q_{n,k-1}}{q_{n,k}}(1+o(1))$. Using (\ref{eq:nurrrrr}),(\ref{eq:nurrrr}),(\ref{eq:nurrr}) this leads to (\ref{eq:mota}). \\  

\noindent Similarly, defining the $j$-th approximation vector for $2\leq j\leq k$
 by $x^{(j,n)}=q_{n,k+1-j}$ and for $j=k+1$ by $x^{(k+1,n)}=q_{n-1,k}$ and then putting  
$y_{t}^{(j,n)}:=\lfloor x^{(j,n)}\zeta_{t}\rfloor$ yields the corresponding inequalities. \\
We now check that these vectors are linearly independent as required. To do this we prove that all the matrices 
$B_{n}=(B_{n}(i,j))_{1\leq i,j\leq k+1}$ obtained by writing the $h$-th approximation vector
$(x^{(h,n)},y_{1}^{(h,n)},\ldots ,y_{k}^{(h,n)})$ in the $h$-th row, i.e. 

\begin{displaymath}
\boldsymbol{B_{n}}= \left( \begin{array}{cccccccccc}
q_{n,k} & q_{n,k}\sum_{i=1}^{n}q_{i,1}^{-1} & q_{n,k}\sum_{i=1}^{n}q_{n,2}^{-1} & \ldots & q_{n,k}\sum_{i=1}^{n}q_{i,k}^{-1}      \\
q_{n,k-1} & q_{n,k-1}\sum_{i=1}^{n}q_{i,1}^{-1} & q_{n,k-1}\sum_{i=1}^{n}q_{n,2}^{-1} & \ldots & q_{n,k-1}\sum_{i=1}^{n-1}q_{i,k}^{-1}  \\
\vdots & \vdots  & \vdots & \vdots & \vdots\\
q_{n,1} & q_{n,1}\sum_{i=1}^{n}q_{i,1}^{-1}& q_{n,1}\sum_{i=1}^{n-1}q_{n,k-1}^{-1} & \ldots & q_{n,1}\sum_{i=1}^{n-1}q_{i,k}^{-1}    \\
q_{n-1,k} & q_{n-1,k}\sum_{j=1}^{n-1}q_{j-1,1}^{-1} & q_{n-1,k}\sum_{j=1}^{n-1}q_{j-1,2}^{-1} & \ldots & q_{n-1,k}\sum_{j=1}^{n-1}q_{j-1,k}^{-1}
\end{array} \right)
\end{displaymath}

are nonsingular. Observe that if we subtract $\frac{B_{n}(h,1)}{B_{n}(h+1,1)}$ times the $(h+1)$-th row
from the $h$-th row of the matrix $B_{n}$, 
all entries in the new $h$-th line will be zero apart
from a one in position $(h,k+2-h)$. Starting with this process at $h=1$ and repreating it until $h=k$ we end up with the matrix

\begin{displaymath}
\boldsymbol{C_{n}}= \left( \begin{array}{cccccccccc}
0 &  0 & \ldots & 0 & 1      \\
0 & \ldots & 0 & 1 & 0   \\
\vdots & \vdots  & \reflectbox{$\ddots$} & \vdots & \vdots \\
0 &  1 & 0 & \ldots & 0   \\
q_{n-1,k} & q_{n-1,k}\sum_{j=1}^{n-1}q_{j-1,1}^{-1}& q_{n-1,k}\sum_{j=1}^{n-1}q_{j-1,2}^{-1} & \ldots & q_{n-1,k}\sum_{j=1}^{n-1}q_{j-1,k}^{-1}
\end{array} \right),
\end{displaymath}
which is easily seen to have absolute value of the determinant equal to $q_{n-1,k}\neq 0$. Therefore also $\det(B_{n})=q_{n-1,k}\neq 0$, as required. \\

\noindent To obtain all the other inequalities $\omega_{j}\geq \wp_{j,i}$ for $2\leq i$, where the upper bound of $i$ depends on $j$,
we proceed analoguously.
In the definition of $x^{(1,n)}$ we replace $q_{n,k}$ by $q_{n,k+1-i}$ and again for $1\leq t\leq k$ we define 
$y_{t}^{(1,n)}=\lfloor x^{(1,n)}\zeta_{t}\rfloor$ for the first approximation vector. We define all the others
by taking $x^{(2,t)}=q_{n,k+1-2},x^{(3,n)}=q_{k+1-3,n},\ldots$ and again $y_{t}^{(i,n)}=\lfloor x^{(i,n)}\zeta_{t}\rfloor$ for
$1\leq t\leq k$ and $2\leq i\leq k+1$. This construction yields the desired lower bounds (or $0$ which is omitted in $\wp(\eta)$) 
as above again by (\ref{eq:nurrrrr}),(\ref{eq:nurrrr}),(\ref{eq:nurrr}).\\

$\underline{\omega_{j}\leq \wp_{j}(\eta)}:$   \\
We have to show that for $1\leq j\leq k+1$ the approximation vectors $\left(x^{(j,n)},y_{1}^{(j,n)},\ldots y_{k}^{(j,n)}\right)$ 
constructed in the first step of the proof are somehow best possible. We split the proof of this assertion in 3 steps. 
To simplify notation let $(c_{n})_{n\geq 1}=(e^{b_{n}})_{n\geq 1}$ be the ordered mixed sequence
 $(q_{1,1},q_{1,2},\ldots ,q_{1,k},q_{2,1},\ldots)$.\\
First step: For an arbitrary approximation vector $(x,y_{1},\ldots ,y_{k})$ let $h$ be the index determined by $c_{h}\leq x<c_{h+1}$ and
let $g$ be the largest integer such that the index $g-1$ satisfies $c_{g-1}\vert x$. Since $x<c_{h+1}$ 
and consequently $c_{h+1}\nmid x$ we clearly have $g\leq h+1$. 
When $X\to\infty$ so does $h$ and we claim that for $h\to\infty$

\begin{eqnarray}
\max_{1\leq t\leq k} \left\vert \zeta_{t}x-y_{t}\right\vert &\geq& \frac{c_{g-1}}{c_{g}}-o\left(\frac{c_{g-1}}{c_{g}}\right), \qquad g> h+1-k   \label{eq:mimena} \\
\max_{1\leq t\leq k} \left\vert \zeta_{t}x-y_{t}\right\vert &\geq& \frac{1}{c_{h+2-k}}-o\left(\frac{1}{c_{h+2-k}}\right),  \qquad g\leq h+1-k.  \label{eq:memina}
\end{eqnarray}
Furthermore in the case $g\leq h+1-k$ (i.e. the assumption of (\ref{eq:memina})), the inequality
$x< \frac{1}{2}c_{h+1}c_{h+1-k}^{-1}$ contradicts that 

\begin{equation}  \label{eq:wilena}
\max_{1\leq t\leq k} \left\vert \zeta_{t}x-y_{t}\right\vert < \frac{1}{2}\frac{1}{c_{h+1-k}}-o\left(\frac{1}{c_{h+1-k}}\right)
\end{equation}
holds for $h\to\infty$.\\

\noindent Second step: Let $X$ be a real parameter from the definition of the approximation constants $\omega_{j}$ and $m=m(X)$ be the index
such that $c_{m}\leq X<\frac{c_{m+1}}{4}$. Then for $1\leq j\leq k+1$ a set of $j$ vectors $(x^{(i)},y_{1}^{(i)},\ldots ,y_{k}^{(i)}), 1\leq i\leq j$,
satisfying the inequalities

\begin{eqnarray}
 x^{(i)} &\leq&  X \qquad   1\leq i\leq j        \label{eq:28neu}\\
\max_{1\leq t\leq k} \left\vert x^{(i)}\zeta_{t} - y_{t}^{(i)}\right\vert &\leq& \frac{1}{2}, \qquad 1\leq i\leq j, \label{eq:29neu} 
\end{eqnarray}
 can only be linearly independent if at least one $x^{(i)}$ is not divisable by $c_{m+2-j}$. \\

\noindent Third step: We intend to show by combining the first two steps and using (\ref{eq:nurrrrr})-(\ref{eq:nurrr}),
 that for arbitrary $X$ the choice of approximation vectors in the proof of
$\omega_{j}\geq \wp_{j}(\eta)$ is somehow optimal, i.e. the approximation constants of this case cannot be improved.\\

\noindent Proof of first step: We first make the assumption $c_{h+1}\in{(q_{n,k})_{n\geq 1}}$, and will explain at the end how to extend
this easily to the case where $c_{h+1}$ belongs to another sequence. This assumption is equivalent to $c_{h+1}=q_{n_{1},k}$ for some
$n_{1}\in{\mathbb{N}}$ and it follows that $c_{h}=q_{n_{1},k-1}$. 
By (\ref{eq:erwin}) we have $c_{l}\vert x$ for all $l\leq g-1$ and 
$c_{l}\nmid x$ for all $l\geq g$, in particular $c_{g}\nmid x$. 
Recall $g\leq h+1$. To prove the assertions we now consider the corresponding cases seperately: \\

\noindent Case 1: $c_{g}> q_{n_{1}-1,k}$. Note that since $q_{n_{1}-1,k}=c_{h+1-k}$ this is equivalent to $g\geq h+2-k$ or $c_{g}\geq c_{h+2-k}$.
 We can write $x=x_{1}+x_{2}$ with $0<x_{1}<c_{g}$ and $c_{g}\vert x_{2}$, 
since by our definition of $g$ we have $x_{1}\neq 0$. Denote by $\overline{g}$ the congruence class
of $g$ in the residue system $\{1,2,\ldots ,k\}$ mod $k$. Note, that $c_{\overline{g}}$ is the smallest value $c_{g}$ 
with $g$ in the residue class $\overline{g}$, or equivalently $\zeta_{\overline{g}}=\sum_{l\geq 0} c_{\overline{g}+kl}^{-1}$, which
we will make use of. We claim that

\begin{eqnarray}
  \left\vert\left\vert x_{1}\zeta_{\overline{g}}\right\vert\right\vert &\geq& c_{g}^{-1}c_{g-1}-\sum_{l\geq 1}c_{g+lk}^{-1}c_{g-1}\geq c_{g}^{-1}c_{g-1}- 2c_{h+1}c_{h+2}^{-1}      \label{eq:trunschuh}   \\
 \{x_{2}\zeta_{\overline{g}}\}&=& \left\vert\left\vert x_{2}\zeta_{\overline{g}}\right\vert\right\vert= \left\vert\left\vert x_{2}\sum_{l\geq 1} c_{g+kl}^{-1}\right\vert\right\vert\leq 2c_{h+1}c_{h+2}^{-1},    \label{eq:truhnschuh}
\end{eqnarray}
where $\vert\vert\ . \vert\vert$ denotes the closest distance to an integer an $\{.\}$ the fractional part of a real number.
Inequality (\ref{eq:trunschuh}) relies on the fact that $0<\frac{x_{1}}{c_{g}}<1$ and $c_{g-1}\vert x_{1}$, which is seen to be true
 because
$c_{g-1}\vert x,c_{g-1}\vert c_{g}$ and $c_{g}\vert x_{2}$ by definition, so putting these together we get $c_{g-1}\vert x-x_{2}$, 
but $x-x_{2}=x_{1}$. Combination of these two facts and recalling that exactly $g+k,g+2k,\ldots $ are the indices greater than 
$g$ belonging to the residue class $\overline{g}$ shows that $x_{1}\zeta_{\overline{g}}$ is of the form

 \[
  x_{1}\zeta_{\overline{g}}=\frac{x_{1}}{c_{g}}+x_{1}\sum_{l\geq 1}c_{g+lk}^{-1}=\frac{Kc_{g-1}}{c_{g}}+x_{1}\sum_{l\geq 1}c_{g+lk}^{-1}\geq K\frac{c_{g-1}}{c_{g}}+x_{1}\sum_{l\geq 1}c_{g+lk}^{-1}
 \]
with $K\in{\{1,2,\ldots ,\frac{c_{g}}{c_{g-1}}-1\}}$ (note $c_{g-1}\vert c_{g}$). 
The assertion now follows by a combination of $x_{1}<c_{g}\leq c_{h+1}$,  

\begin{equation} \label{eq:pfred}
 \sum_{l\geq 1} c_{g+lk}^{-1}< c_{g+k}^{-1}\left(1+\frac{1}{2}+\frac{1}{4}+\cdots \right)= 2c_{g+k}^{-1},
\end{equation}
and $c_{g+k}^{-1}\leq c_{h+2}^{-1}$, which is true by the assumption of case 1.\\
Inequality (\ref{eq:truhnschuh}) follows from the fact that for any $s\leq g$ by virtue of (\ref{eq:erwin}) we have
$c_{s}\vert x_{2}$ which holds in particular for those $c_{s}$ with $s$ in the residue class $\overline{g}$. 
So all quantities $x_{2}c_{s}^{-1}$ with $s\leq g$ are integers.
Thus the sum of quantities of order smaller than $x_{2}c_{g+k}^{-1}$
in $x_{2}\zeta_{\overline{g}}=\sum_{l\geq 0} x_{2}c_{\overline{g}+kl}^{-1}$,
 i.e. $x_{2}\sum_{l\geq 1} c_{g+kl}^{-1}$, has the same fractional part as the entire sum. Now on the one hand we have
$x_{2}\leq x\leq c_{h+1}$, and on the other hand $c_{g+k}\geq c_{h+2}$ by the assumption of case 1. Together with (\ref{eq:pfred})
these assertions yield (\ref{eq:truhnschuh}).\\

\noindent Summing (\ref{eq:trunschuh}) and (\ref{eq:truhnschuh}) and noting that by (\ref{eq:nurrr}) we have
$c_{h+1}c_{h+2}^{-1}=o(c_{g}^{-1}c_{g-1})$, so that we can use the triangle inequality on the fractional parts,
 we further have

\[
\vert\vert x\zeta_{\overline{g}}\vert\vert \geq c_{g}^{-1}c_{g-1}-4c_{h+1}c_{h+2}^{-1}.
\]
By (\ref{eq:18neu}),(\ref{eq:17neu}) and (\ref{eq:nurrr}) the expression $c_{m}^{-1}c_{m-1}$ is 
montonically decreasing in $m$ and the error term $4c_{h+1}c_{h+2}^{-1}$ is obviously
$o(c_{h}c_{h+1}^{-1})$ by (\ref{eq:nurrr}). Hence for $h\to\infty$ we obtain

\[
 \frac{c_{g-1}}{c_{g}}-o\left(\frac{c_{h}}{c_{h+1}}\right)\leq \vert\vert \zeta_{\overline{g}}x\vert\vert\leq \max_{1\leq t\leq k}\vert\vert \zeta_{t}x\vert\vert. 
\]
This establishes (\ref{eq:mimena}) in this case as $\frac{c_{h}}{c_{h+1}}\leq \frac{c_{g-1}}{c_{g}}$ by (\ref{eq:18neu})
and (\ref{eq:nurrrrr})-(\ref{eq:nurrr}). \\
\noindent If $c_{h+1}$ belongs to another sequence $(q_{n,i})_{n\geq 1}$, which means $c_{h+1}=q_{n_{1},i}$ with $i\neq 1$,
 we look at the case $c_{g}\geq q_{n_{1}-1,i}$ and again obtain

\begin{eqnarray*}
  \left\vert\left\vert x_{1}\zeta_{\overline{g}}\right\vert\right\vert &\geq& c_{g}^{-1}c_{g-1}-\sum_{l\geq 1}c_{g+lk}^{-1}c_{g-1}\geq c_{g}^{-1}c_{g-1}- 2c_{h+1}c_{g+k}^{-1}       \\
 \{x_{2}\zeta_{\overline{g}}\}&=& \left\vert\left\vert x_{2}\zeta_{\overline{g}}\right\vert\right\vert= \left\vert\left\vert x_{2}\sum_{l\geq 1} c_{g+kl}^{-1}\right\vert\right\vert\leq 2c_{h+1}c_{g+k}^{-1},  
\end{eqnarray*}
as in the proof of the special case (without using $c_{h+2}\leq c_{g+k}$ as above from which we derived the weaker
but sufficient conditions (\ref{eq:trunschuh}),(\ref{eq:truhnschuh})).
However, by (\ref{eq:nurrrrr})-(\ref{eq:nurrr}) we again have
 $c_{h+1}c_{g+k}^{-1}=o(c_{g}^{-1})$ as $h\to \infty$ (or equivalently $g\to\infty$ as $g\geq h+2-k$) 
 and the rest of the argumentation is almost as above. Thus (\ref{eq:mimena}) holds in any 
case.\\
\noindent Case 2: $c_{g}\leq q_{n_{1}-1,k}$. In this case it is more convenient to work directly with the values $q_{.,.}$ 
instead of $c_{.}$. As in case 1 let $x=x_{1}+x_{2}$  with $0<x_{1}<q_{n_{1},1}$ and 
$q_{n_{1},1}\vert x_{2}$. Again $q_{n_{1}-1,k}\vert q_{n_{1},1}$ and the definition of $g$ ensures $x_{1}\neq 0$.
Analoguously to the proof of (\ref{eq:trunschuh}),(\ref{eq:truhnschuh}) in case 1 we deduce 

\begin{eqnarray*}
 \left\vert\left\vert x_{1}\zeta_{1}\right\vert\right\vert &\geq& \frac{1}{q_{n_{1},1}}-2q_{n_{1},k}q_{n_{1}+1,1}^{-1}           \\
 0\leq x_{2}\zeta_{1}&\leq& 2q_{n_{1},k}q_{n_{1}+1,1}^{-1}.
\end{eqnarray*}
Using again the triangle inequality and (\ref{eq:17neu}), we again deduce

\[
 \left\vert\left\vert x\zeta_{1}\right\vert\right\vert \geq \frac{1}{q_{n_{1},1}}-4\frac{q_{n_{1},k}}{q_{n_{1}+1,1}}= \frac{1}{q_{n_{1},1}}-4\frac{c_{h+1}}{c_{h+2}}.
\]
But by (\ref{eq:nurrrrr})-(\ref{eq:nurrr}) again $\frac{c_{h+1}}{c_{h+2}}=o(c_{h+2-k}^{-1})=o(c_{n_{1},1}^{-1})$ for $h\to\infty$
so that finally

\[
 \frac{1}{q_{n_{1},k}}-o(c_{h+2-k}^{-1})\leq \vert\vert \zeta_{1}x\vert\vert \leq \max_{1\leq t\leq k} \vert\vert \zeta_{t}x\vert\vert.
\]
But $q_{n_{1},1}=c_{h+2-k}$, so we have (\ref{eq:memina}) in this case.
If $c_{h+1}$ belongs to another sequence $(q_{n,i})_{n\geq 1}$, $i\neq k$, we can apply very similar estimates with respect to 
$\zeta_{\overline{i+1}}=\zeta_{i+1}$ instead of $\zeta_{1}$.
So our assumption is no loss of generality in this case either. Thus (\ref{eq:memina}) holds in any case. \\

\noindent We still have to prove that $x<\frac{1}{2}c_{h+1}c_{h+1-k}^{-1}$ contradicts 
(\ref{eq:wilena}). For simplicity we again discuss the case $c_{h+1}\in{(q_{n,k})_{n\geq 1}}$ first.
 Write $x=x_{1}+x_{2}$ with $0<x_{1}<q_{n_{1}-1,k}$ and $q_{n_{1}-1,k}\vert x_{2}$.
Note that again we have $x_{1}\neq 0$ by the assumption $g\leq h+1-k$, so $c_{g}\leq c_{h+1-k}=q_{n_{1}-1,k}$, and the definition of $g$. 
Assume we have $x<\frac{1}{2}c_{h+1}c_{h+1-k}^{-1}=\frac{1}{2}q_{n_{1},k}q_{n_{1}-1,k}^{-1}$.
The fractional part of $x_{2}\zeta_{k}$ is $\sum_{l\leq 0}x_{2}c_{h+1+lk}^{-1}$ as higher order
summands are integers by definition of $x_{2}$. We split this expression in
$\{x_{2}\zeta_{k}\}=\vert\vert x_{2}\zeta_{k}\vert\vert=\sum_{l\geq 0} x_{2}c_{h+1+kl}^{-1}=x_{2}c_{h+1}^{-1}+\sum_{l\geq 1}x_{2}c_{h+1+kl}^{-1}$
and using $x_{2}\leq x$ we infer

\begin{equation} \label{eq:elenard}
 \vert\vert x_{2}\zeta_{k}\vert\vert \leq \frac{1}{2}c_{h+1-k}^{-1}+\sum_{l\geq 1}c_{h+1}c_{h+1-k}^{-1}c_{h+1+kl}^{-1},  
\end{equation}
which is obviously $\frac{1}{2}c_{h+1-k}^{-1}+o(c_{h+1-k}^{-1})$ as $h\to\infty$ by (\ref{eq:nurrr}).\\
\noindent On the other hand, by definition of $x_{1}$ and
$c_{h+1-k}=q_{n_{1}-1,k}\nmid x_{1}$ as $g\leq h+1-k$ by assumption and a very similar argument as in case 1 we have

\[
 \left\vert\left\vert x_{1}\left(\frac{1}{q_{1,k}}+\frac{1}{q_{2,k}}+\cdots +\frac{1}{q_{n_{1}-1,k}}\right)\right\vert\right\vert\geq \frac{1}{q_{n_{1}-1,k}}.
\]
On the other hand by $x_{1}<q_{n_{1}-1,k}$ the sum of the remainder terms of $\zeta_{k}x_{1}$, i.e. 
$x_{1}\sum_{l\geq 0} \frac{1}{q_{n_{1}+l,k}}$, is
 bounded above by $2\frac{q_{n_{1}-1,k}}{q_{n_{1},k}}$ with very similar estimates as in (\ref{eq:pfred}). So

\begin{equation} \label{eq:dranele}
 \vert\vert x_{1}\zeta_{k}\vert\vert \geq \frac{1}{q_{n_{1}-1,k}}-2\frac{q_{n_{1}-1,k}}{q_{n_{1},k}}
\end{equation}
by a very similar argument as in case 1. As $n_{1}\to\infty$, we have 
$\frac{q_{n_{1}-1,k}}{q_{n_{1},k}}=o\left(\frac{1}{q_{n_{1}-1,k}}\right)=c_{h+1-k}^{-1}-o(c_{h+1-k}^{-1})$ 
(note that $h\to\infty$ if $n_{1}\to\infty$) and thus by (\ref{eq:dranele})

\begin{equation} \label{eq:drenale}
 \vert\vert x_{1}\zeta_{k}\vert\vert \geq c_{h+1-k}^{-1}-o(c_{h+1-k}^{-1}) 
\end{equation}
Using triangular inequality on (\ref{eq:elenard}),(\ref{eq:drenale}) thus gives

\[
 \max_{1\leq t\leq k}\vert\vert \zeta_{t}x\vert\vert \geq \vert\vert \zeta_{k}x\vert\vert \geq \frac{1}{2}c_{h+1-k}^{-1}-o(c_{h+1-k}^{-1}).
\]
Our last assertion is proved in this case and the assumption $c_{h}\in{(q_{n,k})_{n\geq 1}}$ can obviously be dropped again. \\

\noindent Proof of second step: Without loss of generality assume that $c_{m}\in{(q_{n,k})_{n\geq 1}}$, the proof for the other cases 
is essentially the same. This means $c_{m}=q_{m_{1},k}$ for some $m_{1}\in{\mathbb{N}}$ and consequently
$c_{m+1}=q_{m_{1}+1,1}$. Suppose $c_{m-j+2}$ divides $x^{(i)}$ for all $1\leq i\leq j$. 
On the one hand, by our assumption the $(j-1)$ numbers
 $c_{m-j+2},c_{m-j+3},\ldots ,c_{m}$ belong to the sequences $q_{n,k},q_{n,k-1},\ldots ,q_{n,k-j+2}$.
On the other hand, $c_{u}\vert c_{u+1}$ for all
$u\geq 1$ combined with $c_{m-j+2}\vert x^{(i)}$ for all $1\leq i\leq j$
implies that for all $s\leq m-j+2$ the number $c_{s}$ divides $x^{(i)}$. From these two facts we conclude
that for $g\notin{\{k,k-1,\ldots ,k+2-j\}}$, i.e.
$g\in{G:=\{1,2,\ldots ,k+1-j\}}$, the partial sum $x^{(i)}\sum_{r=1}^{m_{1}} \frac{1}{q_{r,g}}$ of $\zeta_{g}x^{(i)}$ is an integer,
since every summand $\frac{x^{(i)}}{q_{r,g}}$ is. As terms of order lower than $m_{1}$ in $\zeta_{g}x^{(i)}$ for
 $g\in{G\setminus{\{1\}}}$ obviously add up to a quantity smaller than $\frac{1}{2}$,
for $1\leq i\leq j$ and $g\in{G\setminus{\{1\}}}$ we have

\begin{equation}
\left\vert\left\vert \zeta_{g}x^{(i)}\right\vert\right\vert = \zeta_{g}x^{(i)}- \left\lfloor \zeta_{g}x^{(i)} \right\rfloor= \sum_{r=m_{1}+1}^{\infty} \frac{1}{q_{r,g}} < \frac{1}{2},  \label{eq:haarz} 
\end{equation}
and combined with (\ref{eq:29neu}) eventually

\begin{eqnarray}
 y_{g}^{(i)}&=&\left\lfloor \zeta_{g}x^{(i)} \right\rfloor = x^{(i)}\sum_{r=1}^{m_{1}} \frac{1}{q_{r,g}}.   \label{eq:zraah}            
\end{eqnarray}
In view of our assumption $X< \frac{c_{m+1}}{4}=\frac{q_{m_{1}+1,1}}{4}$ the results 
(\ref{eq:haarz}),(\ref{eq:zraah}) are also valid for $g=1$.
To sum up, for all $g\in{G}$ we have (\ref{eq:zraah}), which obviously yields

\[
 \frac{x^{(a)}}{x^{(b)}}= \frac{y_{g}^{(a)}}{y_{g}^{(b)}},\qquad g\in{G},\quad 1\leq a,b\leq j.
\]
Thus in the matrix, whose $i$-th row is the $i$-th approximation vector
$(x^{(i)},y_{1}^{(i)},\ldots ,y_{k}^{(i)})\in{\mathbb{Z}^{k+1}}$ ($1\leq i\leq j$), the first
$\vert G\vert=k-j+3$ columns together have rank $1$. The rank of the whole matrix therefore cannot exceed $1+[(k+1)-(k-j+3)]=j-1<j$. 
This means the $j$ rows are linearly dependent, a contradiction. 
So $c_{m-j+2}$ cannot divide all the numbers $x^{(i)}$, as stated.   \\

\noindent Proof of third step: We will prove for arbitrary $j$, that 
$\omega_{j}(X)$
is for $X\to\infty$ asymptotically bounded above by one of the fractions (depending on $\log(X)$) 
involved in the definition of $\wp_{j}(\eta)$, by which we mean that for any $\epsilon>0$ and $X=X(\epsilon)$
 large enough we have $\omega_{j}(X)<\wp_{j}(\eta)+\epsilon$.
Since $\omega_{j}=\limsup_{X\to\infty} \omega_{j}(X)$, $\epsilon\to 0$ shows the required result.\\
\noindent  So let $X$ be arbitrary but fixed and let $h$ be the index determined by $c_{h}\leq X< c_{h+1}$.
 We first prove that without loss of generality we may restrict to the case
where $X$ lies an interval of the shape $[c_{h},\frac{c_{h+1}}{4})$. \\
This is the case because the logarithm to the base $X=\frac{c_{h+1}}{4}$ of 

\[
 D_{\boldsymbol{x}}:= \max_{1\leq t\leq k} \vert \zeta_{t}x-y_{t}\vert, \qquad \boldsymbol{x}:=(x,y_{1},\ldots ,y_{k})
\]
for vectors $\boldsymbol{x}$ with $\vert x\vert \leq X=\frac{c_{h+1}}{4}$ 
 is asymptotically the same as to the base $c_{h+1}$. Indeed we have

\begin{eqnarray*}
 \log_{c_{h+1}}(D_{\boldsymbol{x}})&=& \frac{\log(D_{\boldsymbol{x}})}{\log(c_{h+1})}, \quad \log_{\frac{c_{h+1}}{4}}(D_{\boldsymbol{x}})= \frac{\log(D_{\boldsymbol{x}})}{\log(\frac{c_{h+1}}{4})} \\
 \lim_{h\to\infty}\frac{\log(c_{h+1})}{\log(\frac{c_{h+1}}{4})}&=&  \lim_{c_{h}\to\infty} \frac{\log(c_{h+1})}{\log(\frac{c_{h+1}}{4})}= \lim_{c_{h}\to\infty} \frac{\log(c_{h+1})}{\log(c_{h+1})-\log(4)}= 1,
\end{eqnarray*}
and hence

\begin{equation} \label{eq:zarteawa}
 \lim_{h\to\infty}\frac{\log_{c_{h+1}}(D_{\boldsymbol{x}})}{\log_{\frac{c_{h+1}}{4}}(D_{\boldsymbol{x}})}=1.
\end{equation}
On the other hand, since we can restrict to $\boldsymbol{x}$ belonging to some $\omega_{j}$ with
$j\geq 2$, all expressions $\log_{c_{h+1}}(D_{\boldsymbol{x}})$ are bounded above
by $2\omega_{2}\leq 2$ (see (\ref{eq:2222neu})) for $h$ sufficiently large. Together with (\ref{eq:zarteawa}) and
since this holds for every vector $\boldsymbol{x}$ for which $x\leq X$, the definition of the quantities $\omega_{j}$ immediately implies
that they remain unaffected by this change of base. \\

\noindent So let $h=h(X)$ be the index determined by $c_{h}\leq X< \frac{c_{h+1}}{4}$.
 By the second step of the proof (putting $m=h$) at least one of the $j$ linearly independent approximation vectors
 $\left(x,y_{1},y_{2},\ldots ,y_{k}\right)\in{\mathbb{Z}^{k+1}}$ has to satisfy the condition $c_{h-j+2}\nmid x$.
 Consider one of the $j$ approximation vectors with this property.
This means if we let $g-1$ be the largest index with $c_{g-1}\vert x$ as in step 1, we have $g-1\leq h-j+1$, i.e. $g\leq h-j+2$.
Further let $i$ be the index, for which $c_{h}=q_{N,i}$ belongs to the sequence $(q_{n,i})_{n\geq 1}$. At this point one should mention
that we will repeatedly
use step 1 in the following, neglecting the $o$-terms in the estimates 
(\ref{eq:mimena}),(\ref{eq:memina}),(\ref{eq:wilena}) as they do not affect the asymptotic behaviour we aim to prove. \\
\noindent First we treat the case $c_{g-1}\geq q_{N-1,i}$ (case 1 step 1). Note, that $\frac{c_{m-1}}{c_{m}}$ is monotonically
decreasing as $m$ increases by (\ref{eq:nurrrrr})-(\ref{eq:nurrr}) and (\ref{eq:18neu}), which we already used before.
Thus by $g\leq h-j+2$ and (\ref{eq:mimena}) we have

\begin{equation}  \label{eq:limeta}
 \max_{1\leq t\leq k}\vert x\zeta_{t}-y_{t}\vert\leq  \frac{c_{h-j+1}}{c_{h-j+2}}-o\left(\frac{c_{h-j+1}}{c_{h-j+2}}\right) \qquad \text{for}\quad h\to\infty.
\end{equation}
So $X\geq c_{h}$ implies

\[
 -\log_{X} \max_{1\leq t\leq k}\vert x\zeta_{t}-y_{t}\vert \leq -\frac{\log(\frac{c_{h-j+1}}{c_{h-j+2}})}{\log(c_{h})}=\frac{\log(c_{h-j+2})-\log(c_{h-j+1})}{\log(c_{h})}.  
\]
It is now easy to see by (\ref{eq:nurrrrr})-(\ref{eq:nurrr})
that for $h$ in a fixed residue class $\overline{h}$ of the residue system $\{1,2,\ldots ,k\}$ mod $k$, the  
right hand side tends to one of the fractions (depending on $\overline{h}$) in the definition of $\omega_{j}(\eta)$ 
or to zero as $h\to\infty$ or equivalently $X\to\infty$. 
Clearly, each expression in $\wp_{j}(\eta)$ is induced by some $\overline{h}$ in that way as well. 
This shows, that indeed we have $\omega_{j}(X)<\wp_{j}+\epsilon$ for any $\epsilon>0$ and $X=X(\epsilon)$ large enough. \\

\noindent In the remaining case $c_{g-1}<q_{N-1,i}$ (case 2 step 1) by (\ref{eq:memina}) and as $c_{h+1}$ in (\ref{eq:memina})
corresponds to $q_{N,i}$, 
$\max_{1\leq t\leq k}\vert x\zeta_{t}-y_{t}\vert$ is essentially bounded below by
$\frac{1}{q_{N-1,i+1}}$ (omitting the lower order terms and $i$ in the residue system $\{1,2,\ldots ,k\}$
mod $k$). We distinguish three cases now. \\
\noindent If we have $i\notin \{k-1,k\}$, approximation relative to base $X$ is bad, as in this case we have
$\lim_{N\to\infty} \frac{\log(q_{N-1,i+1})}{\log(q_{N,i})}=0$ as a consequence of (\ref{eq:nurrr}), so 
again by $X\geq c_{h}=q_{N,i}$, the expression

\[
 -\log_{X} \max_{1\leq t\leq k}\vert x\zeta_{t}-y_{t}\vert\leq -\log_{c_{h}} \max_{1\leq t\leq k}\vert x\zeta_{t}-y_{t}\vert\leq \frac{\log(q_{N-1,i+1})}{\log(c_{h})}=\frac{\log(q_{N-1,i+1})}{\log(q_{N,i})}
\]
tends to $0$ as $X\to\infty$, and we are done again.\\

\noindent In the case $i=k$, or equivalently $q_{N,k}\leq X< q_{N+1,1}$, due to
 $c_{g-1}<q_{N-1,i}=q_{N-1,k}$ we know again by (\ref{eq:memina}) that
$\max_{1\leq t\leq k}\vert x\zeta_{t}-y_{t}\vert\leq \frac{1}{q_{N,1}}$ and so $X\geq q_{N,k}$ implies

\[
 -\log_{X} \max_{1\leq t\leq k}\vert x\zeta_{t}-y_{t}\vert\leq -\log_{q_{N,k}} \max_{1\leq t\leq k}\vert x\zeta_{t}-y_{t}\vert\leq \frac{\log(q_{N,1})}{\log(q_{N,k})}.
\]
Hence by (\ref{eq:nurrr}),  for $h\to\infty$ we have the asyptotic

\[
 \frac{\log(q_{N,1})}{\log(q_{N,k})}\thicksim \frac{\log(q_{N,1})-\log(q_{N-1,k})}{\log(q_{N,k})}. 
\]
The right hand side, however, converges to $\eta_{1}=\wp_{k+1}(\eta)$ for $N\to\infty$ by (\ref{eq:nurrrrr}).
This shows that $\omega_{j}(X)\leq \wp_{k+1}(\eta)+\epsilon$ for any $\epsilon>0$ and $X=X(\epsilon)$ large enough
and together with $\wp_{k+1}\leq \wp_{j}(\eta)$ for $1\leq j\leq k+1$ we get
 $\omega_{j}\leq \wp_{j}(\eta)$ as desired.  \\

\noindent We devide the remaining case $i=k-1$, which means $q_{N,k-1}\leq X< q_{N,k}$, again into two cases.
If $x<\frac{1}{2}\frac{q_{N,k}}{q_{N-1,k}}$, it follows that 
$\max_{1\leq t\leq k}\vert x\zeta_{t}-y_{t}\vert$ is essentially bounded below by $\frac{1}{2}\frac{1}{q_{N-1,k}}$ in view
 of (\ref{eq:wilena}). This gives the estimate

\[
 -\log_{X} \max_{1\leq t\leq k}\vert x\zeta_{t}-y_{t}\vert\leq \frac{\log(\frac{1}{2}q_{N-1,k})}{\log(q_{N,k-1})}\leq  \frac{\log(q_{N-1,k})-\log(2)}{\log(q_{N,1})},
\]
which tends to $0$ for $X\to\infty$ by (\ref{eq:nurrr}).\\
\noindent Otherwise $x\geq \frac{1}{2}\frac{q_{N,k}}{q_{N-1,k}}$, which clearly implies 
$\lim_{N\to\infty}\frac{\log(x)}{\log(q_{N,k})}=1$ by (\ref{eq:nurrr}). In particular for every $\epsilon>0$ we have
 $\log(x)>(1+\epsilon)\log(q_{N,k})$ for
$N=N(\epsilon)$ sufficiently large.
 Note $X\geq x$ and that $X\to\infty$ is equivalent 
to $N\to\infty$. Combination of these facts together with the fact that 
$\max_{1\leq t\leq k}\vert x\zeta_{t}-y_{t}\vert$ is essentially bounded below by $\frac{1}{q_{N,1}}$ by (\ref{eq:memina}) 
yields the inequality

\[
 -\log_{X} \max_{1\leq t\leq k}\vert x\zeta_{t}-y_{t}\vert\leq \frac{\log(q_{N,1}+o(\log(q_{N,1})))}{\log(q_{N,k}(1+\epsilon))}
\]
for any $\epsilon>0$ and $N=N(\epsilon)$ sufficiently large.
However, the right hand side is of the form $\frac{\log(q_{N,1})}{\log(q_{N,k})}+o\left(\frac{\log(q_{N,1})}{\log(q_{N,k})}\right)$ 
as $N\to\infty$ and $\epsilon\to 0$,
 which tends to $\eta_{1}=\wp_{k+1}(\eta)$ for $N\to\infty$ as in the case $i=k$ above and so it is no improvement either. 
This shows step three.\\

\noindent Now it only remains to determine the approximation constants $\widehat{\omega}_{j}$. However, for $j\geq 2$ they are easily seen 
to be zero as a consequence of $\omega=\infty$. Indeed in this case we have $\underline{\psi}_{1}=-1$ by (\ref{eq:2})
and if for some $\epsilon>0$ we had $\overline{\psi}_{2}=\frac{1}{k}-\epsilon$, we would obtain 
$\sum_{j=1}^{k+1} \psi_{j}(q)=\psi_{1}(q)+\psi_{2}(q)+\sum_{j=3}^{k+1} \psi_{j}(q)\leq (-1+\frac{\epsilon}{3})+(\frac{1}{k}-\epsilon+\frac{\epsilon}{3})+(k-1)\frac{1}{k}<-\frac{\epsilon}{3}$
for a sequence of arbitrary large values $q$, a contradiction to (\ref{eq:zokz}).
So (\ref{eq:2}) again yields $\omega_{j}=0$ for $2\leq j\leq k+1$. \\
It remains to determine $\widehat{\omega}$. Let $X$ be a real number of the form $X=\lfloor \frac{c_{h+1}}{4}-1\rfloor$,
so that in particular we have $c_{h}\leq X<c_{h+1}$.\\ 
Putting $j=1$ in (\ref{eq:limeta}) and noting $X=\frac{c_{h+1}}{4}-1\geq \frac{c_{h+1}}{5}$ in 
the case $g\geq h+2-k$ we obtain 

\begin{eqnarray} \label{eq:harzharzl}
-\log_{X} \max_{1\leq t\leq k}\vert x\zeta_{t}-y_{t}\vert\leq -\log_{\frac{c_{h+1}}{5}}\max_{1\leq t\leq k}\vert x\zeta_{t}-y_{t}\vert\leq \frac{\log(c_{h+1})-\log(c_{h})}{\log(c_{h+1})-\log(5)}.  
\end{eqnarray}
If we now fix  
a residue class $\overline{h}$ for the values of $h$ in the residue system $\{1,2,\ldots ,k\}$ mod $k$, the right hand side of
(\ref{eq:harzharzl}) tends to one of the fractions in the definition of $\widehat{\wp}(\eta)$ as $h\to\infty$. In fact, as
$\overline{h}$ runs through the residue system $\{1,2,\ldots ,k\}$ mod $k$ this
induces a bijection between the residue classes $\overline{h}$ 
of the residue system $\{1,2,\ldots ,k\}$ mod $k$ and the expressions of $\widehat{\omega}$. \\
\noindent In the case $g\leq h+1-k$
as $\max_{1\leq t\leq k}\vert x\zeta_{t}-y_{t}\vert$ is essentially bounded below by
$\frac{1}{q_{N-1,i+1}}$ again, we have the upper esitmate

\[
-\log_{X} \max_{1\leq t\leq k}\vert x\zeta_{t}-y_{t}\vert\leq -\log_{\frac{c_{h+1}}{5}}\max_{1\leq t\leq k}\vert x\zeta_{t}-y_{t}\vert\leq \frac{\log(q_{N,h})}{\log(q_{N+1,h+1})-\log(5)}. 
\]
The right hand side, however, is smaller than the corresponding value in (\ref{eq:harzharzl}) for every $h$, so the
case $g\leq h+1-k$ does never give any improvement. 
 Thus, by its definition, the quantity $\widehat{\omega}$ can be estimated above by the minimum of the expressions
 of $\widehat{\wp}(\eta)$, which simply is $\widehat{\wp}(\eta)$.\\
\noindent On the other hand, fixing a residue classes $\overline{h}$ for $h$
in the residue system $\{1,2,\ldots ,k\}$ mod $k$ again and
 putting $x:=c_{h}$ and $y_{t}:=\lfloor \zeta_{t}c_{h}\rfloor$ for $1\leq t\leq k$, we obtain a bijection   
between the resulting values 

\[
\lim_{h\in{\overline{h}},h\to\infty}-\log_{X} \max_{1\leq t\leq k}\vert x\zeta_{t}-y_{t}\vert=\lim_{h\in{\overline{h}},X\to\infty}-\log_{X} \max_{1\leq t\leq k}\vert x\zeta_{t}-y_{t}\vert
\]
as $\overline{h}$ runs through $\{1,2,\ldots,k\}$ and the expressions involved in the definition of $\widehat{\wp}_{1}(\eta)$.
 Hence $\widehat{\omega}$ is at least as large as the minimum of these expressions, which again is $\widehat{\omega}$.
\end{proof}

\noindent Note, that in the special case $k=2$ we have $\eta_{1}+\eta_{2}=1$, and the approximation constants of $\boldsymbol{\zeta}$ in 
Theorem \ref{satz1} become

\begin{eqnarray*}
 \omega&=& \infty, \quad \omega_{2}=1, \quad  \omega_{3}=\eta_{1}    \\
 \widehat{\omega}&=&\eta_{2}, \quad \widehat{\omega}_{2}=0, \quad \widehat{\omega}_{3}=0.
\end{eqnarray*}
Particularly, we see that $\widehat{\omega}+\omega_{3}=1$, which is easily seen by straight forward computation with
repeated use of (\ref{eq:2})
to be equivalent to Jarnik's identity
in the form $\overline{\psi}_{1}+2\overline{\psi}_{1}\underline{\psi}_{3}+\underline{\psi}_{3}=0$, see Theorem 1.5 in \cite{5}.\\             

\noindent So far we have not asked for the numbers $\zeta_{1},\zeta_{2},\ldots ,\zeta_{k}$
 to be $\mathbb{Q}$-linearly independent together with $1$, which is the usual
assumption. For this purpose, we apply Theorem \ref{satz1} in the special case $\zeta_{j}=\sum_{n\geq 1}2^{-a_{n,j}}$
with suitable sequences $(a_{n,j})_{n\geq 1}$ for $1\leq j\leq k$.

\begin{coro}  \label{koroo}
For $1\leq j\leq k$ let $(a_{n,j})_{n\geq 1}$ be sequences with the properties

\begin{equation} \label{eq:frure}
 a_{1,1}<a_{1,2}< \ldots <a_{1,k}<a_{2,1}<a_{2,2}<\ldots a_{2,k}<a_{3,1}<\ldots
\end{equation}
and for $\eta\in{\mathbb{R}^{k}\times\overline{\mathbb{R}}}$ as in Theorem \ref{satz1} put

\begin{eqnarray}
 \lim_{n\to\infty} \frac{a_{n+1,1}-a_{n,k}}{a_{n+1,k}}&=& \eta_{1}   \label{eq:murrrrr}   \\  
 \lim_{n\to\infty} \frac{a_{n,i}-a_{n,i-1}}{a_{n,k}}&=& \eta_{i}, \quad 2\leq i\leq k. \\     \label{eq:murrrr}
 \lim_{n\to\infty} \frac{a_{n+1,1}}{a_{n,k}}&=& \eta_{k+1}=\infty  \label{eq:murrr}  
\end{eqnarray}
and $\zeta_{j}=\sum_{n\geq 1} 2^{-a_{n,j}}$ for $1\leq j\leq k$. Then the corresponding approximation constants are given
as in Theorem \ref{satz1}.
\end{coro}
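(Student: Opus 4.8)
The plan is to reduce Corollary \ref{koroo} to Theorem \ref{satz1} by a change of base. Set $q_{n,j} := 2^{a_{n,j}}$ for $1 \le j \le k$ and $n \ge 1$, so that $\zeta_j = \sum_{n\ge 1} 2^{-a_{n,j}} = \sum_{n\ge 1} 1/q_{n,j}$, which is exactly the form of the $\zeta_j$ appearing in Theorem \ref{satz1}. First I would verify the ordering hypothesis \eqref{eq:qq}: since $t \mapsto 2^t$ is strictly increasing, the chain of inequalities \eqref{eq:frure} among the exponents $a_{n,j}$ translates verbatim into the chain $q_{1,1} < q_{1,2} < \cdots < q_{2,1} < \cdots$ required by \eqref{eq:qq}. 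Next I would check the divisibility hypothesis \eqref{eq:erwin}: since $a_{n,j} < a_{n,j+1}$ and $a_{n,k} < a_{n+1,1}$, the numbers $2^{a_{n,j}}$ divide $2^{a_{n,j+1}}$ and $2^{a_{n,k}}$ divides $2^{a_{n+1,1}}$, so $q_{n,j} \mid q_{n,j+1}$ and $q_{n,k} \mid q_{n+1,1}$ as needed.

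The remaining point is to match the limit conditions \eqref{eq:nurrrrr}--\eqref{eq:nurrr} of Theorem \ref{satz1} with \eqref{eq:murrrrr}--\eqref{eq:murrr}. Here one simply computes $\log(q_{n,j}) = a_{n,j} \log 2$, so that
\[
\frac{\log(q_{n+1,1}) - \log(q_{n,k})}{\log(q_{n+1,k})} = \frac{(a_{n+1,1} - a_{n,k})\log 2}{a_{n+1,k}\log 2} = \frac{a_{n+1,1} - a_{n,k}}{a_{n+1,k}},
\]
and likewise the other two ratios are unchanged under replacing $\log(q_{n,j})$ by $a_{n,j}$; the factor $\log 2$ cancels in every quotient. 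Hence \eqref{eq:nurrrrr} holds with the same $\eta_1$ as \eqref{eq:murrrrr}, \eqref{eq:nurrrr} holds with the same $\eta_i$ as \eqref{eq:murrrr}, and \eqref{eq:nurrr} holds with $\eta_{k+1} = \infty$. The constraints \eqref{eq:19neu}--\eqref{eq:17neu} on the vector $\eta$ are assumed to hold by hypothesis (``$\eta$ as in Theorem \ref{satz1}''), so all hypotheses of Theorem \ref{satz1} are met.

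Applying Theorem \ref{satz1} to this choice of $q_{n,j}$ then gives exactly the asserted values of $\omega_j$ and $\widehat{\omega}_j$ in terms of $\eta$, which is the conclusion of Corollary \ref{koroo}. There is essentially no obstacle here: the corollary is a direct specialization, and the only thing worth being careful about is the bookkeeping that the base-$2$ digit expansion $\zeta_j = \sum_n 2^{-a_{n,j}}$ really is the $s$-adic expansion with $s = 2$ (the digits $\alpha_{n,j}$ are all equal to $1$), so that the sequences $(a_{n,j})$ here coincide with the non-vanishing digit positions in the sense of \eqref{eq:ttt}, and in particular the resulting $\zeta_j$ automatically lie in $(0,1)$ for an appropriate normalization and are $\mathbb{Q}$-linearly independent together with $1$ — the latter because a $\mathbb{Q}$-linear relation among $1, \zeta_1, \dots, \zeta_k$ would force, after clearing denominators, a relation among finitely many powers of $2$ with bounded coefficients that is violated by the rapid growth $a_{n+1,1}/a_{n,k} \to \infty$ coming from \eqref{eq:murrr}.
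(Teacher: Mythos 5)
Your proposal is correct and matches the paper's proof, which is exactly the one-line substitution $q_{n,j}=2^{a_{n,j}}$ followed by the observation that the ordering, divisibility and limit conditions of Theorem \ref{satz1} are satisfied (the $\log 2$ factors cancelling in all quotients). Your closing remark about $\mathbb{Q}$-linear independence of $1,\zeta_{1},\ldots,\zeta_{k}$ is not part of the corollary and not needed for it; the paper treats that question separately in Proposition \ref{prop1}.
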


\begin{proof}
 Clearly, if we put $q_{n,j}=2^{a_{n,j}}$, all conditions of Theorem \ref{satz1} are satisfied. 
\end{proof}
\noindent Now one can easily prove that there are uncountably many vectors $\boldsymbol{\zeta}\in{\mathbb{R}^{k}}$, such that additionally
 $1,\zeta_{1},\zeta_{2},\ldots ,\zeta_{k}$ are linearly independent over $\mathbb{Q}$. The arguments of the proof of the following
 Proposition \ref{prop1} are suitable to prove the existence of vectors $(\zeta_{1},\zeta_{2},\ldots ,\zeta_{k})$ 
for which $1,\zeta_{1},\zeta_{2},\ldots ,\zeta_{k}$ are linearly independent subject to certain approximation properties if this
existence was established without the linear independence condition.                                                                                                 

\begin{prop}  \label{prop1}
 One can choose sequences $(a_{n,j})_{n\geq 1}$ in Corollary \ref{koroo} such that $\{1,\zeta_{1},\ldots ,\zeta_{k}\}$ is linearly independent
over $\mathbb{Q}$.
\end{prop}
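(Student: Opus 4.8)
The plan is to prove a statement slightly stronger than asked, namely that \emph{every} choice of sequences $(a_{n,j})_{n\geq 1}$, $1\leq j\leq k$, satisfying the hypotheses of Corollary \ref{koroo} already makes $1,\zeta_{1},\ldots,\zeta_{k}$ linearly independent over $\mathbb{Q}$; in particular such sequences exist. (Equivalently one could run a diagonal construction, building the $(a_{n,j})$ block by block and killing the $m$-th candidate relation at the $m$-th stage; the estimates below show that this killing step happens automatically.)

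So suppose $c_{0}+\sum_{j=1}^{k}c_{j}\zeta_{j}=0$ with $(c_{0},\ldots,c_{k})\in\mathbb{Z}^{k+1}\setminus\{0\}$. If $c_{1}=\cdots=c_{k}=0$ then $c_{0}=0$, a contradiction, so let $j_{1}$ be the least index with $c_{j_{1}}\neq 0$. First I would multiply the relation by $2^{a_{n,k}}$. By the ordering (\ref{eq:frure}) one has $a_{m,j}\leq a_{n,k}$ for all $m\leq n$ and all $j$, while $a_{n+1,j}>a_{n,k}$; hence for $n$ large one may write $2^{a_{n,k}}\zeta_{j}=N_{n,j}+\varepsilon_{n,j}$ with $N_{n,j}:=\sum_{m\leq n}2^{a_{n,k}-a_{m,j}}\in\mathbb{Z}$ and $\varepsilon_{n,j}:=\sum_{m\geq n+1}2^{a_{n,k}-a_{m,j}}\in(0,1)$. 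Thus $2^{a_{n,k}}c_{0}+\sum_{j}c_{j}N_{n,j}+\sum_{j}c_{j}\varepsilon_{n,j}=0$; the first two summands form an integer while $\bigl|\sum_{j}c_{j}\varepsilon_{n,j}\bigr|\leq\sum_{j}|c_{j}|\varepsilon_{n,j}\to 0$, so $\sum_{j=1}^{k}c_{j}\varepsilon_{n,j}=0$ for all sufficiently large $n$.

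It remains to contradict this. I would record the relevant gap growth: $(a_{n,k})_{n}$ is a strictly increasing sequence of integers, hence $a_{n,k}\to\infty$, and then (\ref{eq:murrrrr})--(\ref{eq:murrr}) together with $\eta_{i}>0$ (see (\ref{eq:18neu})) yield $a_{n,i}-a_{n,i-1}\to\infty$ for $2\leq i\leq k$, as well as $a_{n+1,1}-a_{n,k}\to\infty$ and $a_{n+2,j}-a_{n+1,j}\to\infty$. Consequently $\varepsilon_{n,j}=2^{a_{n,k}-a_{n+1,j}}(1+o(1))$, and since $a_{n+1,1}<a_{n+1,2}<\cdots<a_{n+1,k}$ with all consecutive gaps tending to infinity, the index $j=j_{1}$ dominates: for $j>j_{1}$ one gets $\varepsilon_{n,j}=o(\varepsilon_{n,j_{1}})$ (and if $j_{1}=k$ the sum is simply $c_{k}\varepsilon_{n,k}\neq 0$), whence $\bigl|\sum_{j\geq j_{1}}c_{j}\varepsilon_{n,j}\bigr|\geq|c_{j_{1}}|\varepsilon_{n,j_{1}}-o(\varepsilon_{n,j_{1}})\geq\frac{1}{2}\varepsilon_{n,j_{1}}>0$ for $n$ large, contradicting the previous paragraph. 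The only mildly delicate point is the bookkeeping of the three gap-growth statements from the limit relations and the ensuing domination estimate; everything else is routine, and the very same argument applies whenever one has sequences whose ``within-block'' and ``between-block'' gaps both tend to infinity, which is the sense in which it carries over to the later constructions.
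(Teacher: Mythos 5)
Your argument is correct, and it in fact proves more than the proposition asks: every choice of sequences satisfying (\ref{eq:frure}) and (\ref{eq:murrrrr})--(\ref{eq:murrr}) already yields $1,\zeta_{1},\ldots,\zeta_{k}$ linearly independent over $\mathbb{Q}$. The key steps all check out: multiplying a putative relation by $2^{a_{n,k}}$ makes the ``head'' an integer, the fractional contribution $\sum_{j}c_{j}\varepsilon_{n,j}$ is then an integer tending to $0$ (since $a_{n+1,1}-a_{n,k}\to\infty$, which follows from $\eta_{1}>0$ and $a_{n,k}\to\infty$), hence vanishes for large $n$, and the within-block gaps $a_{n,i}-a_{n,i-1}\to\infty$ give the domination $\varepsilon_{n,j}=o(\varepsilon_{n,j_{1}})$ for $j>j_{1}$, producing the contradiction. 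This is a genuinely different route from the paper. The paper does not attempt a direct irrationality-type argument at all: it fixes one concrete family of sequences defined by the recursions (\ref{eq:23neu})--(\ref{eq:26neu}), notes that the case $k=1$ gives transcendence of $\zeta_{1}$ via Liouville's theorem, and then perturbs the sequences for $\zeta_{2},\ldots,\zeta_{k}$ (adding $1$ to the entries indexed by an arbitrary subset of $\mathbb{N}$), using that the resulting family of perturbations is uncountable while the $\mathbb{Q}$-span of the previously fixed numbers is countable; linear independence is then obtained by an inductive, non-constructive selection. Your approach buys a stronger and cleaner statement (no selection is needed, so for instance Corollary \ref{korol1} needs no perturbation step), while the paper's cardinality argument buys flexibility: as the paper remarks after the proof, the same countability trick upgrades immediately to algebraic independence of $\zeta_{1},\ldots,\zeta_{k}$, which your direct fractional-part estimate does not give.
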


\begin{proof}
Note that in the case $k=1$ (\ref{eq:nurrrrr})-(\ref{eq:nurrr}) simply yield $\lim_{n\to\infty}\frac{a_{n+1}}{a_{n}}=\infty$
and it follows from Liouville's Theorem that the corresponding number of the form $\zeta=\sum_{n\geq 1} 2^{-a_{n}}$  
 is transcendental, in particular $\{1,\zeta\}$ is linearly 
independent over $\mathbb{Q}$. In the case $k\geq 2$ consider the numbers $a_{1,j}=j$ for $1\leq j\leq k$ and define 
the sequences $(a_{n,j})_{n\geq 1}$ by the recurrence relations

\begin{eqnarray}
a_{n+1,1} & = & \left\lfloor \frac{1}{\eta_{1}}n \cdot a_{n,k}(\eta_{1}) \right\rfloor        \label{eq:23neu}                       \\
a_{n+1,2} & = & \left\lfloor \frac{1}{\eta_{1}}n \cdot a_{n,k}(\eta_{1}+\eta_{2}) \right\rfloor    \label{eq:24neu}             \\
\vdots && \vdots  \\                                                                              \label{eq:25neu}
a_{n+1,k} & = & \left\lfloor \frac{1}{\eta_{1}}n \cdot a_{n,k}(\eta_{1}+\eta_{2}+\cdots +\eta_{k}) \right\rfloor. \label{eq:26neu}
\end{eqnarray}
One checks that (\ref{eq:23neu})-(\ref{eq:26neu}) imply (\ref{eq:nurrrrr})-(\ref{eq:nurrr}). Now we prove that we can change (\ref{eq:23neu})-(\ref{eq:26neu})
slightly such that $\{1,\zeta_{1},\ldots ,\zeta_{k}\}$ is linearly independent over $\mathbb{Q}$.\\
Let $(b_{n})_{n\geq 1}$ be the ordered combined set of all $a_{n,j}$ defined as above.
Note that we can obviously ''disturb'' the system (\ref{eq:23neu})-(\ref{eq:26neu}) a little by adding one to the elements of 
the form $b_{a}$ where $a\in{A}$ with $A$ an arbitrary subset of $\mathbb{N}$, without violating (\ref{eq:23neu})-(\ref{eq:26neu}).
 Noting that (\ref{eq:23neu})-(\ref{eq:26neu}) imply $\lim_{n\to\infty} \frac{a_{n+1,1}}{a_{n,1}}=\infty$, by the considerations of the case $k=1$ we know 
$\{1,\zeta_{1}\}$ is a $\mathbb{Q}$-linearly independent set, where 
$\zeta_{1}$ is generated by the sequence $(a_{n,1})_{n\geq 1}$ defined above, ie $\zeta_{1}=\sum_{n\geq 1} a_{n,1}^{-1}$.
If we now consider the set of all sequences
$\mathscr{A}_{2}=\{(a_{n,2}^{\prime})_{n\geq 1}\}$ which arise from $(a_{n,2})_{n\geq 1}$ as above by adding $1$ to 
$b_{a}$ with $a\in{A_{2}}$ for an arbitrary subset $A_{2}$ of $\mathbb{N}$, we see that $\mathscr{A}_{2}$ is uncountable. So 
there must be a number $\zeta_{2}^{\prime}$ generated by an $a_{n,2}^{\prime}\in{\mathscr{A}_{2}}$ with the property
that $\{1,\zeta_{1},\zeta_{2}^{\prime}\}$ are linearly independent over $\mathbb{Q}$, since the ${\mathbb{Q}}$-span 
of $\{1,\zeta_{1}\}$ is only countable. Now we proceed analoguously with sets $\mathscr{A}_{j}$ for $3\leq j\leq k$ and finally get a 
$\mathbb{Q}$-linearly independent set $\{1,\zeta_{1},\zeta_{2}^{\prime},\ldots ,\zeta_{k}^{\prime}\}$. As mentioned above, the 
 set $\{\zeta_{1},\zeta_{2}^{\prime},\ldots \zeta_{k}^{\prime}\}$ fulfills all the requirements.
\end{proof}
\noindent Note: Since algebraic numbers have countable cardinality one can readily generalize the proof above to show that we can even
ask $\boldsymbol{\zeta}$ to be algebraically independent. \\

\noindent We now give some applications of the above theorem. Note that (\ref{eq:mbni}),(\ref{eq:11}) and (\ref{eq:1000alk})
 imply that for all $\epsilon>0$ and sufficiently large $Q=Q(\epsilon)>0$ we have the bounds  

\begin{eqnarray*}
 -1&\leq& \psi_{1}(Q)\leq 0   \\
 \frac{j-k-1}{kj}-\epsilon &\leq& \psi_{j}(Q)\leq \frac{1}{k}, \qquad 2\leq j\leq k+1. 
\end{eqnarray*}
In the first Corollary we construct $\zeta_{1},\zeta_{2},\ldots ,\zeta_{k}$ for which
 each $\psi_{j}(Q)$ takes each of the values inside of the corresponding intervals 
$I_{1}:=(-1,0),I_{j}:=(\frac{j-k-1}{kj},\frac{1}{k})$
for arbitrarily large $Q$ simultaneously for all $1\leq j\leq k+1$. So roughly speaking in this case all $\psi_{j}$ take their possible 
range of values for arbitrarily large $(Q,\infty)$.
In particular the bounds in (\ref{eq:1000alk}) are best possible. 

\begin{coro}  \label{korol1}
 There exist $\zeta_{1},\zeta_{2},\ldots ,\zeta_{k}$ for which the set $\{1,\zeta_{1},\zeta_{2},\ldots ,\zeta_{k}\}$ is
$\mathbb{Q}$ linearly independent and such that

\begin{eqnarray*}
 \omega_{j}&=&\frac{1}{j-1} \qquad 1\leq j\leq k+1   \\
 \widehat{\omega}&=& \frac{1}{k}                               \\
 \widehat{\omega}_{j}&=& 0 \qquad 2\leq j\leq k+1.
\end{eqnarray*}
  
\end{coro}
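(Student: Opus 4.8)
The plan is to obtain the desired numbers as a special case of Theorem~\ref{satz1}, applied with the fully symmetric choice of exponents
\[
\eta_{1}=\eta_{2}=\cdots=\eta_{k}=\frac{1}{k},\qquad \eta_{k+1}=\infty .
\]
First I would check that this $\eta$ satisfies the admissibility conditions (\ref{eq:19neu})--(\ref{eq:17neu}): the first $k$ coordinates sum to $1$; one has $\infty=\eta_{k+1}>\eta_{k}\geq\eta_{k-1}\geq\cdots\geq\eta_{1}=\frac{1}{k}>0$; and $\eta_{k+1}=\infty$. In fact this choice is forced by the prescribed values, since under (\ref{eq:19neu}) one has $\wp_{k+1}(\eta)=\eta_{1}/(\eta_{1}+\cdots+\eta_{k})=\eta_{1}$, so requiring $\omega_{k+1}=\frac{1}{k}$ forces $\eta_{1}=\frac{1}{k}$, and then $\eta_{i}\geq\eta_{1}$ together with $\sum_{i\leq k}\eta_{i}=1$ forces $\eta_{i}=\frac{1}{k}$ for all $i\leq k$.

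Next I would invoke Corollary~\ref{koroo} and Proposition~\ref{prop1}: for this $\eta$ there exist sequences $(a_{n,j})_{n\geq 1}$ obeying (\ref{eq:frure}) and (\ref{eq:murrrrr})--(\ref{eq:murrr}) — produced, say, from the recurrence (\ref{eq:23neu})--(\ref{eq:26neu}) and the perturbation argument of Proposition~\ref{prop1} — for which $\{1,\zeta_{1},\ldots,\zeta_{k}\}$ is $\mathbb{Q}$-linearly independent, where $\zeta_{j}=\sum_{n\geq 1}2^{-a_{n,j}}$. By Corollary~\ref{koroo}, equivalently Theorem~\ref{satz1} with $q_{n,j}=2^{a_{n,j}}$, the classical approximation constants of $\boldsymbol{\zeta}=(\zeta_{1},\ldots,\zeta_{k})$ equal $\omega_{j}=\wp_{j}(\eta)$, $\widehat{\omega}=\widehat{\wp}_{1}(\eta)$ and $\widehat{\omega}_{j}=0$ for $2\leq j\leq k+1$; the last family already matches the assertion.

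It then remains to evaluate $\wp_{j}(\eta)$ and $\widehat{\wp}_{1}(\eta)$ at $\eta=(\frac1k,\ldots,\frac1k,\infty)$. Here $\omega_{1}=\wp_{1}(\eta)=\eta_{k+1}=\infty$ (the value $\frac{1}{j-1}$ at $j=1$). For $2\leq j\leq k+1$, every fraction occurring in the maximum defining $\wp_{j}(\eta)$ has numerator some single $\eta_{\bullet}=\frac{1}{k}$ and denominator a block $\eta_{m}+\eta_{m-1}+\cdots+\eta_{1}$ of $m$ consecutive exponents ending at $\eta_{1}$, with $m$ running over $\{j-1,j,\ldots,k\}$; hence each such fraction equals $\frac{1}{m}\leq\frac{1}{j-1}$, with equality precisely for the last term, where $m=j-1$. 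Thus $\omega_{j}=\frac{1}{j-1}$. Similarly $\widehat{\wp}_{1}(\eta)$ is the minimum of the same collection $\{1,\frac12,\ldots,\frac1k\}$ of fractions, so $\widehat{\omega}=\widehat{\wp}_{1}(\eta)=\frac{1}{k}$. This establishes all the claimed equalities.

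There is no genuine difficulty beyond this bookkeeping: the substantive content (the exact computation of $\omega_{j},\widehat{\omega}_{j}$ for Liouville-type vectors, and the $\mathbb{Q}$-linear independence) is already carried by Theorem~\ref{satz1} and Proposition~\ref{prop1}. The only delicate points are getting the list of consecutive partial sums appearing in $\wp_{j}(\eta)$ correct and confirming the ordering (\ref{eq:18neu}). As a remark, combining $\omega_{j}=\frac{1}{j-1}$ with the transference identity (\ref{eq:2}) gives $\underline{\psi}_{j}=\frac{j-k-1}{kj}$ for every $1\leq j\leq k+1$, so this example simultaneously shows the Summerer--Schmidt bound (\ref{eq:1000alk}) to be best possible, as announced before the statement.
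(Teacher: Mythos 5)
Your proposal is correct and follows essentially the same route as the paper: apply Theorem~\ref{satz1} (via Corollary~\ref{koroo} and the linear--independence construction of Proposition~\ref{prop1}) with $\eta_{1}=\cdots=\eta_{k}=\frac{1}{k}$, $\eta_{k+1}=\infty$, and read off $\wp_{j}(\eta)=\frac{1}{j-1}$, $\widehat{\wp}_{1}(\eta)=\frac{1}{k}$, $\widehat{\omega}_{j}=0$. Your explicit evaluation of the fractions (and the observation that the paper's phrase ``$\eta_{2}=\cdots=\eta_{k+1}=\frac{1}{k}$'' must mean the first $k$ coordinates, since $\eta_{k+1}=\infty$ is required) is exactly the intended argument, just spelled out in more detail than the paper's two-line proof.
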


\begin{proof}
Note that by means of proposition \ref{prop1} for every $\eta\in{\mathbb{R}^{k+1}}$ subject to the restrictions of Theorem \ref{satz1}
 we can construct $\boldsymbol{\zeta}=(\zeta_{1},\zeta_{2},\ldots ,\zeta_{k})$ together with $1$ linearly independent over 
$\mathbb{Q}$ such that Theorem \ref{satz1} holds. 
 Putting $\eta_{2}=\eta_{3}=\cdots =\eta_{k+1}=\frac{1}{k}$ in Theorem \ref{satz1} immediately gives all the stated equalities
 for this $\boldsymbol{\zeta}$.  
\end{proof}

\noindent Now we want to give our first explicit construction of special cases of Schmidt's conjecture, which was proved by Moshchevitin
 in a nonconstructive way in \cite{3}. The conjecture states, that for each integer pair $(k,i)$ with $k\geq 2,1\leq i\leq k-1$ there                   
 exists a vector 
$\boldsymbol{\zeta}\in{\mathbb{R}^{k}}$ with $\{1,\boldsymbol{\zeta}\}$ linearly independent over $\mathbb{Q}$
such that $\lim_{q\to\infty}\lambda_{i}(q)=0$ and $\lim_{q\to\infty} \lambda_{i+2}=\infty$.
Note that we cannot have $\lim_{q\to\infty}\lambda_{i}(q)=0,\lim_{q\to\infty} \lambda_{i+1}(q)=\infty$ for any $i$
because of the assumption of linear independence because of (\ref{eq:asdf}), see also the introduction in \cite{3}. We now give a generalisation          
of this fact in the special case $i=1$ for arbitrary $k\geq 2$.

\begin{coro}
 Let $k\geq 2$ and $3\leq r\leq k+1$ be integers. Then there exists $\boldsymbol{\zeta}\in{\mathbb{R}^{k}}$ with 
$\{1,\boldsymbol{\zeta}\}$ linearly independent over $\mathbb{Q}$ such that 
\begin{eqnarray*}
 \overline{\psi}_{1}&<&0     \\
 \underline{\psi}_{j}&<&0< \:\: \overline{\psi}_{j}, \qquad   2\leq j \leq r-1,  \\  
 \underline{\psi}_{j}&>&0, \qquad  r\leq j\leq k+1. 
\end{eqnarray*}
The case $r=3$ clearly implies Schmidt's conjecture for $i=r-2=1$.
\end{coro}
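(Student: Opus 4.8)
The plan is to deduce everything from Theorem \ref{satz1} together with Corollary \ref{koroo} and Proposition \ref{prop1}. These say that for any $\eta=(\eta_1,\ldots,\eta_k,\infty)$ subject to (\ref{eq:19neu})--(\ref{eq:17neu}) there is a vector $\boldsymbol{\zeta}$, with $\{1,\zeta_1,\ldots,\zeta_k\}$ linearly independent over $\mathbb{Q}$, whose classical approximation constants are $\omega_j=\wp_j(\eta)$, $\widehat{\omega}=\widehat{\wp}_1(\eta)$ and $\widehat{\omega}_j=0$ for $2\le j\le k+1$. Hence it suffices, for each admissible pair $(k,r)$, to choose such an $\eta$ whose constants translate, through the identities (\ref{eq:2}), into the required sign pattern for $\underline{\psi}_j,\overline{\psi}_j$.

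First I would record the translation. By (\ref{eq:2}) and $\widehat{\omega}_j=0$ one gets $\overline{\psi}_j=1/k>0$ for every $2\le j\le k+1$, so positivity of $\overline{\psi}_j$ on $2\le j\le r-1$ is automatic. Again by (\ref{eq:2}), for $1\le j\le k+1$,
\[
\underline{\psi}_j<0\ \Longleftrightarrow\ \omega_j>\tfrac1k,\qquad \underline{\psi}_j>0\ \Longleftrightarrow\ \omega_j<\tfrac1k,\qquad \overline{\psi}_1<0\ \Longleftrightarrow\ \widehat{\omega}>\tfrac1k.
\]
Since $\omega_j$ is non-increasing in $j$ (having $j+1$ linearly independent solutions is a stronger demand than having $j$) and $\wp_2(\eta)=1>1/k$, it is enough to produce an admissible $\eta$ with $\widehat{\wp}_1(\eta)>1/k$, $\wp_{r-1}(\eta)>1/k$ and $\wp_r(\eta)<1/k$.

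Now comes the explicit construction. Put $N:=k-r+2\ge1$, let $\rho$ be a large positive integer, and set $\eta_1=\cdots=\eta_{r-2}=a$ and $\eta_{r-2+l}=a\rho^{\,l}$ for $1\le l\le N$, where $a>0$ is chosen so that $\eta_1+\cdots+\eta_k=1$; this $\eta$ is positive, non-decreasing and, with $\eta_{k+1}=\infty$, admissible. For the three target inequalities: $\wp_{r-1}(\eta)$ contains the term $\eta_1/(\eta_1+\cdots+\eta_{r-2})=1/(r-2)\ge 1/(k-1)>1/k$ (using $1\le r-2\le k-1$), so $\wp_{r-1}(\eta)>1/k$. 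For $\widehat{\wp}_1(\eta)=\min_{1\le m\le k}\eta_m/(\eta_1+\cdots+\eta_m)$, every term with $m<k$ already exceeds $1/k$, because $\eta_1+\cdots+\eta_m\le m\eta_m<k\eta_m$; hence $\widehat{\wp}_1(\eta)>1/k$ reduces to $\eta_k=a\rho^N>1/k$, which holds for $\rho$ large since then $\eta_k\to1$. Finally $\wp_r(\eta)=\max_{1\le i\le N}\eta_i/(\eta_1+\cdots+\eta_{i+r-2})$, so $\wp_r(\eta)<1/k$ amounts to $k\eta_i<\eta_1+\cdots+\eta_{i+r-2}$ for $1\le i\le N$; in the range $i\le r-2$ this reads $N<\rho(1+\rho+\cdots+\rho^{i-1})$, and in the range $r-1\le i\le N$ it reads $k\rho^{\,i-r+2}<(r-2)+\rho+\cdots+\rho^{\,i}$, and both hold once $\rho$ is large enough (e.g. $\rho\ge k$).

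Combining this with the translation gives exactly $\overline{\psi}_1<0$, $\underline{\psi}_j<0<\overline{\psi}_j$ for $2\le j\le r-1$ and $\underline{\psi}_j>0$ for $r\le j\le k+1$. For $r=3$ one obtains $\overline{\psi}_1<0$ and $\underline{\psi}_3>0$; since $\lambda_j(q)=Q^{\psi_j(Q)}$, these force $\lambda_1(q)\to0$ and $\lambda_3(q)\to\infty$ as $q\to\infty$, i.e.\ Schmidt's conjecture for $i=1$. The real obstacle is the last pair of conditions: $\wp_{r-1}(\eta)>1/k$ wants the initial $\eta_i$'s (almost) equal, whereas $\wp_r(\eta)<1/k$ wants the $\eta_i$'s to grow fast, and the "constant block followed by a geometric tail" is built precisely to reconcile the two; checking the family of inequalities $k\eta_i<\eta_1+\cdots+\eta_{i+r-2}$ across the two ranges of $i$ is the only genuinely computational step.
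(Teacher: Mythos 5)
Your proposal is correct, and it reaches the result by the same overall reduction as the paper (apply Theorem \ref{satz1} via Proposition \ref{prop1}, translate the sign conditions on $\underline{\psi}_j,\overline{\psi}_j$ into $\widehat{\omega}>1/k$, $\omega_{r-1}>1/k$, $\omega_r<1/k$ using (\ref{eq:2}), and exploit monotonicity of $j\mapsto\omega_j$) but with a genuinely different choice of $\eta$ and a different verification. The paper takes a purely geometric weight vector, $\eta_j=\alpha^{k-j}/(1+\alpha+\cdots+\alpha^{k-1})$ with $\alpha\in(0,1)$, notes that for constant ratios the last term dominates each $\wp_j(\eta)$, so the whole problem collapses to the one condition $\Phi_{r-2}(\alpha)<1/k<\Phi_{r-3}(\alpha)$ for the rational functions $\Phi_u(\alpha)=\alpha^u/(1+\alpha+\cdots+\alpha^u)$, and then finds a suitable $\alpha$ by the intermediate value theorem together with monotonicity of the $\Phi_u$. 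You instead take a flat block $\eta_1=\cdots=\eta_{r-2}=a$ followed by a geometric tail of ratio $\rho$, and verify the three target inequalities term by term for $\rho$ large (indeed $\rho\ge k$ suffices). Both arguments are sound; yours gives a fully explicit admissible parameter and covers $k=2$ in the same breath (the paper treats $k=2$ as a separate small case), while the paper's version is shorter because the constant-ratio ansatz makes the maxima in $\wp_j(\eta)$ collapse to a single term, at the cost of invoking the intermediate value theorem rather than exhibiting $\alpha$ outright.
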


\begin{proof}
We may assume $k\geq 3$, because for $k=2$ the Corollary only states that $\overline{\psi}_{1}<0$ and $\underline{\psi}_{3}>0$
is possible, which only requires $\overline{\psi}_{1}<0$ by (\ref{eq:zokz}) and for any choice of
 $\eta=(\eta_{1},\eta_{2})\neq (1/2,1/2)$ the construction of Theorem \ref{satz1} gives an example. 
We apply Theorem \ref{satz1} with $\eta$ defined by

\begin{eqnarray}
 \eta_{1}&=& \frac{\alpha^{k-1}}{1+\alpha+\cdots +\alpha^{k-1}},   \label{eq:b1} \\
 \frac{\eta_{j}}{\eta_{j+1}}&=& \alpha, \qquad 1\leq j\leq k-1.    \label{eq:b2}
\end{eqnarray}
The parameter $\alpha\in{\{0,1\}}$ will be chosen later in dependence of $(r,k)$. 
 First note that by (\ref{eq:2}) our inequalities translate to

\begin{eqnarray}
 \widehat{\omega}&>& \frac{1}{k},  \label{eq:a1}  \\
 \widehat{\omega}_{j}&<&\frac{1}{k}< \:\: \omega_{j}, \qquad 2\leq j\leq r-1,  \label{eq:a2}  \\
 \omega_{j}&<&\frac{1}{k},   \qquad r\leq j\leq k+1.   \label{eq:a3} 
\end{eqnarray}
\noindent Now note, that the left hand side of (\ref{eq:a2}) trivially holds by Theorem \ref{satz1}. With (\ref{eq:b1}),(\ref{eq:b2})
the quantity $\widehat{\wp}(\eta)$ of Theorem \ref{satz1} becomes

\[
  \widehat{\omega}= \min \left\{ 1, \frac{1}{1+\alpha}, \ldots ,\frac{1}{1+\alpha+\cdots + \alpha^{k-1}}\right\}= \frac{1}{1+\alpha+\cdots + \alpha^{k-1}}.
\]
Moreover the assumption $\alpha<1$ implies that (\ref{eq:a1}) holds. To obtain the remaining inequalities it is obviously sufficient
to prove $\omega_{r}<\frac{1}{k}<\omega_{r-1}$ for some $\alpha$, which
 by (\ref{eq:b1}),(\ref{eq:b2}) and Theorem \ref{satz1} is equivalent to

\begin{equation} \label{eq:300}
\frac{\alpha^{r-2}}{1+\alpha+\cdots +\alpha^{r-2}}< \frac{1}{k}< \frac{\alpha^{r-3}}{1+\alpha+\cdots +\alpha^{r-3}},  
\end{equation}
 
\noindent since the last term in $\wp_{j}(\eta)$ is easily seen to be the largest in $\wp_{j}$ in our special case of constant ratios.
For $r=3$ this reduces to $\frac{\alpha}{\alpha+1}<\frac{1}{k}$, which is obviously true if we choose any 
$\alpha\in{(0,\frac{1}{k})}$, so we can assume $r\geq 4$. Defining the functions

\[
\Phi_{u}:\quad  \alpha \longmapsto \frac{\alpha^{u}}{1+\alpha+\cdots +\alpha^{u}}
\]
shows that (\ref{eq:300}) in the cases left to consider is equivalent to $\phi_{u+1}(\alpha)<\frac{1}{k}<\phi_{u}(\alpha)$ 
for $1\leq u\leq k-2$. It is easy to check that all these $\phi_{u}$ are continuous, $\phi_{u}(\alpha)>\phi_{u+1}(\alpha)$ and
$\phi_{u}(0)=0, \phi_{u}(1)=\frac{1}{u+1}>\frac{1}{k}$. Further more from

\begin{eqnarray*}
 \Phi_{u}^{\prime}(\alpha) & = & \frac {u\alpha^{u-1}(1+\alpha+\cdots +\alpha^{u})-\alpha^{u}(1+2\alpha+\cdots +u\alpha^{u-1})}{(1+\alpha+\cdots +\alpha^{u})^{2} } \\
& = & \frac {\alpha^{2u-2}+2\alpha^{2u-3}+\cdots +(u-1)\alpha^{u}+u\alpha^{u-1}}{(1+\alpha+\cdots +\alpha^{u})^{2}} > 0
\end{eqnarray*}
we deduce that they are monotonically increasing in $\alpha$. Combination of these properties implies that for fixed $u$
there exists some $t\in{(0,1)}$
 such that $\phi_{u}(t)=\frac{1}{k}$ by intermediate value theorem. It further follows from these considerations
 on the one hand $\phi_{u}(\alpha)>\frac{1}{k}$ for $\alpha>t$,
and on the other hand the existence of an interval $\alpha\in{(t_{0},t_{1})}$ with $t_{0}<t<t_{1}$ such
 that $\phi_{u+1}(\alpha)<\frac{1}{k}$.
Thus, for all $\alpha\in{(t,t_{1})}$ we have $\phi_{u+1}(\alpha)<\frac{1}{k}<\phi_{u}(\alpha)$.
\end{proof}

\subsection{The case $\omega<\infty$}

We now aim to give similar results for vectors $\boldsymbol{\zeta}=(\zeta_{1},\zeta_{2},\ldots ,\zeta_{k})$ whose components 
$\zeta_{j}$ have one-dimensional approximation constant $\omega<\infty$. Hence in particular the simultaneous aproximation
constant $\omega$ is finite too, as by definition it cannot exceed the minimum of the one-dimensional constants. 
As in Theorem \ref{satz1} each $\zeta_{j}$ will
 be the sum of the re ciprocals of integers $q_{n,j}$ that satisfy (\ref{eq:qq}),(\ref{eq:erwin}), 
but the conditions (\ref{eq:nurrrrr})-(\ref{eq:17neu}) will be altered in ways to obtain a symmetric situation in all $\zeta_{j}$, which
will be more convenient for the purposes of chapter 2. 
We start with proving the following

\begin{theo}  \label{satz2}
 For $1\leq j\leq k$ let $\zeta_{j}=\sum_{n\geq 1} \frac{1}{q_{n,j}}$ where
 $(q_{n,j})_{n\geq 1}$ are sequences of integers for which (\ref{eq:qq}),(\ref{eq:erwin}) are satisfied and that for

\[
  (b_{n})_{n\geq 1}= (\log(q_{1,1}),\log(q_{2,1}),\ldots ,\log(q_{k,1}),\log(q_{1,2}),\ldots)
\]
the inequality

\begin{eqnarray*}
 \liminf_{n\to\infty} \frac{b_{n+1}}{b_{n}}>2
\end{eqnarray*}
is satisfied. Then the first $(k-1)$ approximation constants relative to $\zeta_{1},\zeta_{2},\ldots ,\zeta_{k}$ are given by

\begin{eqnarray*}
 \omega&=& \limsup_{n\to \infty} \frac{b_{n+1}-b_{n}}{b_{n}}, \qquad  \quad \quad \quad \widehat{\omega}= \liminf_{n\to\infty} \frac{b_{n}-b_{n-1}}{b_{n}}  \\
 \omega_{2}&=& \limsup_{n\to\infty} \frac{b_{n}-b_{n-1}}{b_{n}}, \qquad \quad \quad \quad \widehat{\omega}_{2}= \liminf_{n\to\infty} \frac{b_{n-1}-b_{n-2}}{b_{n}}  \\
 \omega_{3}&=& \limsup_{n\to\infty} \frac{b_{n-1}-b_{n-2}}{b_{n}}, \qquad \quad \quad \widehat{\omega}_{3}= \liminf_{n\to\infty} \frac{b_{n-2}-b_{n-3}}{b_{n}}  \\
 &\vdots& \quad \vdots \qquad \vdots \qquad \qquad \qquad \qquad \qquad \vdots \qquad \vdots \qquad \vdots  \\
 \omega_{k-1}&=& \limsup_{n\to\infty} \frac{b_{n-k+3}-b_{n-k+2}}{b_{n}}, \quad \widehat{\omega}_{k-1}= \liminf_{n\to\infty} \frac{b_{n-k+2}-b_{n-k+1}}{b_{n}}.
\end{eqnarray*}
Further more we have the inequlities

\begin{eqnarray*}
\omega_{k}&\geq& \limsup_{n\to\infty} \frac{b_{n-k+2}-b_{n-k+1}}{b_{n}}, \quad \widehat{\omega}_{k}\geq \liminf_{n\to\infty} \frac{b_{n-k+1}-b_{n-k}}{b_{n}} \\
\omega_{k+1}&\geq& \limsup_{n\to\infty} \frac{b_{n-k+1}-b_{n-k}}{b_{n}}, \quad \widehat{\omega}_{k+1}\geq \liminf_{n\to\infty} \frac{b_{n-k}-b_{n-k-1}}{b_{n}}.
\end{eqnarray*}
\end{theo}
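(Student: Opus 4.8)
The plan is to mimic the structure of the proof of Theorem \ref{satz1}, but exploiting the new symmetry assumption $\liminf_{n\to\infty} b_{n+1}/b_n > 2$ in place of the ordering and the condition $\eta_{k+1}=\infty$. As there, the proof splits into a lower bound and an upper bound for each $\omega_j$ and $\widehat{\omega}_j$. For the lower bounds I would construct, for a given parameter $X$, a system of $j$ linearly independent approximation vectors of the form $x^{(i)} = c_{h+1-i}$ (with $(c_n) = (e^{b_n})$ the ordered mixed sequence) and $y_t^{(i)} = \lfloor \zeta_t x^{(i)}\rfloor$, where $h$ is determined by $c_h \le X < c_{h+1}$. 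The divisibility condition (\ref{eq:erwin}) forces the tail of each $\zeta_t x^{(i)}$ to be of size $O(c_{h+1-i}/c_{h+2-i})$, and taking logarithms to base $X \asymp c_h$ yields precisely the limsup/liminf expressions $\frac{b_{h+2-i}-b_{h+1-i}}{b_h}$ claimed for $\omega_i$ resp. $\widehat{\omega}_i$. Linear independence of these vectors follows by the same row-reduction argument as for the matrices $B_n$ in Theorem \ref{satz1}: subtracting suitable multiples of consecutive rows produces an anti-triangular block plus one extra row, so the determinant is a nonzero $q$-value. For $\widehat{\omega}_i$ one additionally chooses $X$ of the form $\lfloor c_{h+1}/4 \rfloor$ so that every sufficiently large real parameter is within a bounded factor of some $c_h$, which is exactly what the hypothesis $b_{n+1}/b_n > 2$ guarantees asymptotically.

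For the upper bounds I would adapt the three-step argument from Theorem \ref{satz1}. The first step is the lower estimate on $\max_t \|\zeta_t x\|$: for an arbitrary vector $(x,y_1,\dots,y_k)$, let $c_{g-1}$ be the largest $c_\ell$ dividing $x$; writing $x = x_1 + x_2$ with $0 < x_1 < c_g$ and $c_g \mid x_2$, and using $c_{g-1}\mid x_1$, one gets $\|\zeta_{\overline{g}}\, x\| \ge c_{g-1}/c_g - o(c_{g-1}/c_g)$, the error term being controlled now by $\liminf b_{n+1}/b_n > 2$ rather than by $\eta_{k+1}=\infty$. The second step is the same divisibility/rank obstruction: if $c_{m+2-j}$ divides all of $x^{(1)},\dots,x^{(j)}$ (with $c_m \le X < c_{m+1}/4$), then the first $k-j+3$ columns of the coordinate matrix have rank $1$, so the $j$ rows cannot be independent. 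The third step combines these: at least one of the $j$ independent vectors has $c_{h+2-j}\nmid x$, hence $g \le h+2-j$, and then the first step gives $\max_t\|\zeta_t x\| \gtrsim c_{h+1-j}/c_{h+2-j}$ (or, in the "$g$ very small" subcase, $\gtrsim 1/c_{h+2-j}$, which is worse and never improves the constant). Taking $-\log_X$ of this and recalling $X \asymp c_h$ gives $\omega_j(X) \le \frac{b_{h+2-j}-b_{h+1-j}}{b_h} + o(1)$, and the limsup over $h$ is the desired value; the analogous bookkeeping with $X = c_{h+1}/4 - 1$ yields $\widehat{\omega}_j$. For $j = k, k+1$ the second step is vacuous (one cannot force a rank drop with only $k-j+3 \le 1$ columns), which is exactly why one only obtains the inequalities $\omega_k \ge \dots$, $\omega_{k+1}\ge\dots$ rather than equalities there.

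The main obstacle I anticipate is the careful error-term management in the first step when $c_{h+1}$ does \emph{not} belong to the distinguished sub-sequence $(q_{n,k})_{n\ge1}$, and more seriously the case analysis according to whether $c_g$ is close to or far from the "block boundary" $q_{n_1-1,k}$ — the analogues of Case 1 and Case 2 in the proof of Theorem \ref{satz1}. Under the present hypothesis $\liminf b_{n+1}/b_n > 2$ (rather than $=\infty$), the geometric-series bounds of the form $\sum_{\ell\ge 1} c_{g+\ell k}^{-1} \le 2 c_{g+k}^{-1}$ still hold, but one must check that all cross terms $c_{h+1}c_{g+k}^{-1}$, $c_{h+1}c_{h+2}^{-1}$ etc. are genuinely $o(c_{g-1}/c_g)$ so that the triangle inequality on fractional parts is legitimate; this uses $b_{n+1} > 2 b_n$ for large $n$ in an essential way and is where the constant $2$ (as opposed to any smaller bound) enters. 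Once these estimates are in place the matching of the resulting fractions with the claimed limsup/liminf expressions, indexed by the residue class of $h$ modulo $k$, is routine bookkeeping identical to that in Theorem \ref{satz1}.
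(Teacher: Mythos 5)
Your overall strategy (explicit approximants $x^{(i)}=c_{h+1-i}$ for the lower bounds, and steps 1--2 of Theorem \ref{satz1} for the upper bounds) is indeed the paper's, but the core of the upper-bound argument has a genuine gap: the treatment of the highly divisible vectors, i.e.\ those with $g\leq h+1-k$. For such a vector the correct lower bound from (\ref{eq:memina}) is $\max_t\Vert\zeta_t x\Vert\gtrsim 1/c_{h+2-k}$ (index $k$, not $j$ as in your parenthetical), and it is \emph{not} automatic that this ``never improves the constant'': converting it to an exponent gives roughly $b_{h+2-k}/b_{h}$, and the whole point of the hypothesis $\liminf b_{n+1}/b_{n}>2$ is that it yields $1/c_{m}\geq c_{m}/c_{m+1}$, so this exponent is dominated by the $(k-1)$-st expression $\wp_{k-1}=\limsup\frac{b_{n-k+3}-b_{n-k+2}}{b_{n}}$. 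The paper makes this precise by splitting $\omega_j(X)=\max\{\omega_j^{1}(X),\omega_j^{2}(X)\}$ according to the two categories of vectors, proving $\omega_j^{1}\leq\wp_j$ for all $j$ via steps 1--2, and $\omega_j^{2}\leq\wp_{k-1}$ together with $\omega_j^{2}(X)<\omega_{k-1}^{1}(X)$ (see (\ref{eq:qply}), (\ref{eq:ronzor})); equality then survives exactly while $\wp_j\geq\wp_{k-1}$, i.e.\ for $j\leq k-1$. Your version, taken at face value, would prove the equalities for all $j\leq k+1$, which contradicts the theorem itself (and the remark after Corollary \ref{korolar}, where $\widehat{\omega}_k=\frac{C-1}{C^{k}-1}>\frac{C-1}{C^{k}}$). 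Relatedly, your stated reason for losing equality at $j=k,k+1$ --- that the rank-obstruction step is ``vacuous'' because $k-j+3\leq1$ --- is wrong both numerically ($k-j+3\geq 2$ there) and conceptually: that step works for all $j\leq k+1$ just as in Theorem \ref{satz1}; the loss comes solely from the category-2 vectors, whose error bound $1/c_{h+2-k}$ may correspond to an exponent exceeding $\wp_k$ and $\wp_{k+1}$.

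Two further points. You locate the role of the constant $2$ in the triangle-inequality error management of step 1, but those estimates only need the ratios $b_{n+1}/b_n$ bounded away from $1$ (this is exactly why Theorem \ref{satz6} works with $\kappa_d$ arbitrarily close to $1$); the constant $2=\kappa_1$ is needed precisely for the comparison $1/c_m\geq c_m/c_{m+1}$ described above. And for the lower bound on $\widehat{\omega}_j$ your justification is off: under the hypothesis the $c_h$ grow like $c_{h+1}\geq c_h^{2}$, so a general parameter $X$ is \emph{not} within a bounded factor of some $c_h$, and one cannot ``choose'' $X$ when bounding a $\liminf$ from below; instead one takes an arbitrary $X\in[c_h,c_{h+1})$, uses the same approximants, and bounds $-\log_X(\cdot)\geq-\log_{c_{h+1}}(\cdot)$, which is what produces the denominators $b_{n}$ (with $n=h+1$) in the $\widehat{\wp}_j$ expressions.
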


\begin{proof}

Denote the right hand side expressions by $\wp_{j}$ respectively $\widehat{\wp}_{j}$.
 We prove the inequalities $\omega_{j}\geq \wp_{j}, \widehat{\omega}_{j}\geq \widehat{\wp}_{j}$ for $1\leq j\leq k+1$ and then
$\omega_{j}\leq \wp_{j}, \widehat{\omega}_{j}\leq \widehat{\wp}_{j}$ for $1\leq j\leq k-1$. This obviously yields the assertions of the 
Theorem. \\
\noindent Throughout the proof, for arbitrary $X$ let the integer $h$ be determined as the index for which $c_{h}\leq X<c_{h+1}$.
Obviously $X\to\infty$ is equivalent to $h\to\infty$, which will often be used implicitly. We first prove $\omega_{j}\geq \wp_{j}$
for $1\leq j\leq k+1$.\\
\noindent Assume $j$ arbitrary but fixed.
Observe that for every $h$ if we put
$x^{(i)}=c_{h+1-i}$, $y_{t}^{(i)}= \lfloor \zeta_{t}x^{(i)}\rfloor$ for $1\leq t\leq k$ and $1\leq i\leq j$, we obtain

\begin{equation} \label{eq:tiryl}
 \max_{1\leq i\leq j}\max_{1\leq t\leq k} \left\vert \zeta_{t}x^{(j)}-y_{t}^{(j)}\right\vert= \left\vert \zeta_{\overline{t_{0}}}x^{(j)}-y_{t_{0}}^{(j)}\right\vert =  \frac{c_{h+1-j}}{c_{h+2-j}}+o\left(\frac{c_{h+1-j}}{c_{h+2-j}}\right) \qquad \text{as} \quad h\to\infty
\end{equation}
with $t_{0}:=h+2-j$, as explained in the proof of Theorem \ref{satz1}. Choose an integer sequence 
$(h_{r})_{r\geq 1}$ of values for $h$ in (\ref{eq:tiryl}) such that with
$(X_{r})_{r\geq 1}:=(x_{r})_{r\geq 1}:=(c_{h_{r}})_{r\geq 1}$ we have

\begin{equation} \label{eq:mtiryl}
 \lim_{r\to\infty} -\log_{X_{r}}\frac{c_{h_{r}+1-j}}{c_{h_{r}+2-j}}=\limsup_{h\to\infty} -\log_{c_{h}}\frac{c_{h+1-j}}{c_{h+2-j}}=\limsup_{h\to\infty}\frac{\log(c_{h+2-j})-\log(c_{h+1-j})}{\log(c_{h})},
\end{equation}
which is possible by definition of the $\limsup$. Leaving out the lineary independence condition, the so constructed vectors
$(x^{(i)},y_{1}^{(i)},\ldots ,y_{k}^{(i)}), 1\leq i\leq j$, lead to the values $\wp_{j}$ as lower
 bounds for $\omega_{j}$ in view of (\ref{eq:tiryl}).
However, the missing lineary independence condition is now obtained exactly as in Theorem \ref{satz1}.\\  

\noindent To prove $\widehat{\omega}_{j}\geq \widehat{\wp}_{j}$,
 consider any sequence $(h_{r})_{r\geq 1}$ and the same approximation vectors as in the
proof of $\omega_{j}\geq \wp_{j}$ but take
logarithms to base $c_{h+1}>X$ instead of base $c_{h}$.
This yields a lower estimate for $\omega_{j}(X)$ for $X\in{[c_{h_{r}},c_{h_{r+1}})}$.
As this is valid for any sequence $(h_{r})_{r\geq 1}$ we obtain the lower bounds $\widehat{\wp}_{j}=\liminf_{X\to\infty} \omega_{j}(X)$
 for $\widehat{\omega}_{j}$ by definition of $\liminf$, as we claimed. 
So far we have established the lower bounds
$\wp_{j}$ (respectively $\widehat{\wp}_{j}$) for $\omega_{j}$ (respectively $\widehat{\omega}_{j}$) for $1\leq j\leq k+1$.\\
\noindent For the upper bounds note first that with basically the same arguments as in the proof of
step 3 in Theorem \ref{satz1} we can restrict to 
the case $c_{h}\leq X\leq \frac{1}{4}c_{h+1}$. Further step 1 and step 2 of $\wp_{j}(\eta)\leq \omega_{j}$ in the proof
 of Theorem \ref{satz1} remain valid in the present situation.
Indeed, the estimates (\ref{eq:mimena}),(\ref{eq:memina}) are already valid under the assumtion $\frac{b_{n+1}}{b_{n}}>2$, which
is weaker than (\ref{eq:nurrr}) used in Theorem \ref{satz1}. The proof of step 2 is analoguous.\\
\noindent Now for every fixed $X$
we divide all approximation vectors $(x,y_{1},\ldots ,y_{k})$ with $x\leq X$ into two categories.
Let $g$ be the largest integer such that $c_{g-1}\vert x$ for an approximation vector $(x,y_{1},\ldots ,y_{k})$
 as in step 1 of Theorem \ref{satz1}. The distinction of vectors with $g>h+1-k$, which we will call vectors of category 1,
 and $g\leq h+1-k$, which we will call vectors of category 2, now leads to 2 cases. \\
\noindent Case 1: If for fixed $X$ with $c_{h}\leq X<\frac{1}{4}c_{h+1}$ we have $g> h+1-k$ for an approximation vector
(i.e. it belongs to category 1), (\ref{eq:mimena}) 
implies that 

\begin{equation} \label{eq:manitur}
 \omega_{j}(X) \leq -\log_{c_{h}} \frac{c_{h+1-j}}{c_{h+2-j}}= \frac{\log(c_{h+2-j})-\log(c_{h+1-j})}{\log(c_{h})}, \qquad 1\leq j\leq k+1.
\end{equation}
So we have that for every $X$ the quantity $\wp_{j}$ is an upper bound for
$\omega_{j}^{1}(X)$, by which we mean the supremum over all real numbers $\nu$ such that
 (\ref{eq:1}) has $j$ linearly independent vector solutions all of which are of the first category.
Hence if we define $\omega_{j}^{1}=\limsup_{X\to\infty}\omega_{j}^{1}(X)$, we get 

\begin{equation} \label{eq:yaqmko}
 \omega_{j}^{1}\leq \wp_{j}, \qquad 1\leq j\leq k+1. 
\end{equation}
In order to give a connection between the approximation constants $\widehat{\omega}_{j}$ and approximation
vectors of category 1, we define $\widehat{\omega}_{j}^{1}:=\liminf_{X\to\infty} \omega_{j}^{1}(X)$.
We start with an arbitrary sequence $(X_{r})_{r\geq 1}$ with corresponding subsequence $(c_{h_{r}})_{r\geq 1}$ of $(c_{h})_{h\geq 1}$
and define a sequence $(X_{r}^{\prime})_{r\geq 1}$ by putting $X_{r}^{\prime}:=\frac{1}{5}c_{h_{r}+1}$. 
In view of (\ref{eq:mimena}) and observing that the fractions $\frac{c_{m}}{c_{m+1}}$ are monotonically decreasing by our assumption
$\frac{b_{n+1}}{b_{n}}>2$, 
we get the upper estimate for the approximation constants $\omega_{j}^{1}(X_{r}^{\prime})$

\begin{equation} \label{eq:tomoy}
\omega_{j}^{1}(X_{r}^{\prime})\leq -\log_{\frac{1}{5}X_{r+1}} \left(\frac{c_{h_{r}+2-j}}{c_{h_{r}+1-j}}\right)= \frac{\log(c_{h_{r}+2-j})-\log(c_{h_{r}+1-j})}{\log(c_{h_{r}+1})-\log(5)}
\end{equation}
for $1\leq j\leq k+1$.
If we specify a sequence $(X_{r})_{r\geq 1}$ for which the corresponding sequence $(h_{r})_{r\geq 1}$ has the property

\begin{equation} \label{eq:albla}
 \lim_{r\to\infty} -\log_{h_{r}}\frac{c_{h_{r}+1-j}}{c_{h_{r}+2-j}}=\liminf_{h\to\infty} -\log_{c_{h}}\frac{c_{h+1-j}}{c_{h+2-j}}=\liminf_{h\to\infty}\frac{\log(c_{h+2-j})-\log(c_{h+1-j})}{\log(c_{h})},
\end{equation}
which is possible again by definition of $\liminf$, we put $n=h+1$ in the definition of $\widehat{\wp}_{j}$ so that
the right hand side of (\ref{eq:tomoy}) tends to $\widehat{\wp}_{j}$ as $r\to\infty$.
Thus $\lim_{r\to\infty}\omega_{j}^{1}(X_{\underline{r}})$ exists and is bounded above by $\widehat{\wp}_{j}$.
In particular 

\begin{equation} \label{eq:yaqokm}
\widehat{\omega}_{j}^{1}\leq \widehat{\wp}_{j}, \qquad 1\leq j\leq k+1. 
\end{equation}
Case 2: In the other case $g\leq h+1-k$ (i.e. the vector belongs to category 2), 
consider first an arbitrary sequence $(X_{r})_{r\geq 1}$ that tends monotonically to infinity
and the corresponding subsequence $(c_{h_{r}})_{r\geq 1}$ of $(c_{h})_{h\geq 1}$ determined by $c_{h_{r}}\leq X_{r}< c_{h_{r+1}}$.
Recall that without loss of generality we can assume
$c_{h_{r}}\leq X_{r}<\frac{1}{4}c_{h_{r}+1}$, as this does not affect the approximation constants.
By (\ref{eq:memina}) we have

\begin{equation}  \label{eq:huhnfisch}
 \max_{1\leq t\leq k} \vert \zeta_{t}x-y_{t}\vert \geq \frac{1}{c_{h_{r}+2-k}}-o\left(\frac{1}{c_{h_{r}+2-k}}\right).
\end{equation}
Furthermore define $\omega_{j}^{2}(X),\omega_{j}^{2}$ as the supremum of all $\nu$, such that (\ref{eq:1}) has $j$ linearly independent
vector solutions with at least one vector of category 2, for every fixed $X>0$. 
Taking logarithms to the bases $c_{h_{r}}<X_{r}$ for $r\to\infty$,  (\ref{eq:huhnfisch}) implies that for $1\leq j\leq k+1$ 
the expression $\omega_{j}^{2}(X_{r})$ is bounded above by

\begin{equation}
 -\log_{X_{r}}\left(\frac{c_{h_{r}+2-k}}{c_{h_{r}+3-k}}\right)\leq -\log_{c_{h_{r}}}\left(\frac{c_{h_{r}+2-k}}{c_{h_{r}+3-k}}\right)=\frac{\log(c_{h_{r}+3-k})-\log(c_{h_{r}+2-k})}{\log(c_{h_{r}})} \label{eq:qply}.  
\end{equation}
Note that the bound in (\ref{eq:qply}) is valid for any sequence
 $(X_{r})_{r\geq 1}$
and the corresponding sequence $(c_{h_{r}})_{r\geq 1}$. Hence on the one hand we have

\begin{equation} \label{eq:malina}
 \mathscr{A}:= \limsup_{h\to\infty} \frac{\log(c_{h+3-k})-\log(c_{h+2-k})}{\log(c_{h})}\geq \limsup_{X\to\infty}\omega_{j}^{2}(X)=\omega_{j}^{2}
\end{equation}
simply by the definition of $\limsup$. Observe $\mathscr{A}=\wp_{k-1}$ (where $n$ in the 
definition of $\wp_{k-1}$ corresponds to $h$ in the definition of $\mathscr{A}$) and hence 

\begin{equation}  \label{eq:yaqomk}
\omega_{j}^{2}\leq \wp_{k-1}, \qquad 1\leq j\leq k+1.
\end{equation}
\noindent On the other hand, for any sequence $(X_{r})_{r\geq 1}$ by (\ref{eq:huhnfisch}) and our assumption
 $\frac{b_{n+1}}{b_{n}}=\frac{\log(c_{n+1})}{\log(c_{n})}>2$, for $r$ sufficiently large and $1\leq j\leq k-1$ we have

\begin{equation} \label{eq:ronzor}
 \omega_{j}^{2}(X_{r})\leq -\log_{X_{r}}\left(\frac{1}{c_{h_{r}+2-k}}-o\left(\frac{1}{c_{h_{r}+2-k}}\right)\right)< -\log_{X_{r}}\left(\frac{c_{h_{r}+2-k}}{c_{h_{r}+3-k}}\right)\leq \omega_{k-1}^{1}(X_{r}),
\end{equation}
where the right inequality is a consequence of the construcions $\omega_{j}\geq \wp_{j}, \widehat{\omega}_{j}\geq \widehat{\wp}_{j}$
in the case $j=k-1$ (see (\ref{eq:tiryl}),(\ref{eq:mtiryl})).
Since this holds for any sequence $(X_{r})_{r\geq 1}$, we have  

\begin{equation} \label{eq:schmetzeling}
 \widehat{\omega}_{j}^{2}:=\liminf_{X\to\infty} \omega_{j}^{2}(X)\leq \liminf_{X\to\infty} \omega_{k-1}^{1}(X)= \widehat{\omega}_{k-1}^{1}\leq \widehat{\omega}_{j}^{1}, \quad 1\leq j\leq k-1.
\end{equation}
\noindent We now combine the results above for the quantities $\omega_{j}^{s},\widehat{\omega}_{j}^{s}$, $s\in{\{1,2\}}$
 to derive the required upper bounds.
As every approximation vector is either of category 1 or category 2 for fixed $X>0$,
the defintions of $\omega_{j}^{s}, s\in{\{1,2\}}$, imply $\omega_{j}(X)=\max\{\omega_{j}^{1}(X),\omega_{j}^{2}(X)\}$ for every $X>0$. 
Observe that for any functions $f,g:\mathbb{R}^{+}\mapsto \mathbb{R}$ we have

\begin{equation*}
 \max\left\{\limsup_{X\to\infty} f(X),\limsup_{X\to\infty} g(X)\right\}= \limsup_{X\to\infty} \max\left\{f(X),g(X)\right\}  
\end{equation*}
Applying this on $f(X)=\omega_{j}^{1}(X),g(X)=\omega_{j}^{2}(X)$ implies that
$\omega_{j}$, which is by definition $\limsup_{X\to\infty}\omega_{j}(X)$, equals the maximum of 
$\omega_{j}^{1}=\limsup_{X\to\infty}\omega_{j}^{1}(X)$ and $\omega_{j}^{2}=\limsup_{X\to\infty}\omega_{j}^{2}(X)$.
By (\ref{eq:yaqmko}),(\ref{eq:yaqomk}) for $1\leq j\leq k-1$ this maximum is
$\max\{\wp_{j},\wp_{k-1}\}=\wp_{j}$, which proves the upper bounds for $\omega_{j}$, $1\leq j\leq k-1$. \\
\noindent It remains to check the upper estimates for the constants $\widehat{\omega}_{j}$.
In view of $\omega_{j}(X)=\max\{\omega_{j}^{1}(X),\omega_{j}^{2}(X)\}$, (\ref{eq:yaqokm}) and (\ref{eq:schmetzeling}),
we obtain 

\begin{equation*} \label{eq:camillard}
\widehat{\omega}_{j}=\liminf_{X\to\infty} \max_{s=1,2}\{\omega_{j}^{s}(X)\}= \liminf_{X\to\infty} \omega_{j}^{1}(X)= \widehat{\omega}_{j}^{1}\leq \widehat{\wp}_{j}
\end{equation*}
for $1\leq j\leq k-1$.
\end{proof}

\noindent Remark: It is rather clear from the proof that Theorem \ref{satz2} remains valid for $C=\infty$ too. We will need this later
 in Theorem \ref{theo9}.

\begin{coro}  \label{korolar}
Let the assumptions of Theorem \ref{satz2} be satisfied and assume further the existence of the limit
 of the quotients $\frac{b_{n+1}}{b_{n}}$, i.e.

\[
 \lim_{n\to\infty} \frac{b_{n+1}}{b_{n}}=:C\geq 2.
\]
Then the first $(k-1)$ approximation constants are given by
 \begin{eqnarray*}
 \omega \quad = &C-1&   \\
  \omega_{2} \quad = &\frac{C-1}{C}&=  \quad \widehat{\omega}             \\
 \omega_{3} \quad = &\frac{C-1}{C^{2}}&=  \quad \widehat{\omega}_{2}     \\
 \vdots \qquad &\vdots& \quad \vdots                      \\
 \omega_{k-1} \quad= &\frac{C-1}{C^{k-2}}&=  \quad \widehat{\omega}_{k-2}   \\
 &\frac{C-1}{C^{k-1}}&=  \quad \widehat{\omega}_{k-1}.
\end{eqnarray*}
For the remaining approximation constants we have the inequlities

\begin{eqnarray*}
\omega_{k}&\geq& \frac{C-1}{C^{k-1}}   \\
\omega_{k+1} \quad \geq \quad \widehat{\omega}_{k}&\geq& \frac{C-1}{C^{k}}     \\
 \widehat{\omega}_{k+1}&\geq& \frac{C-1}{C^{k+1}}.
\end{eqnarray*}
\end{coro}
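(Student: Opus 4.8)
The plan is to read Corollary \ref{korolar} off Theorem \ref{satz2}: under the extra hypothesis that $C:=\lim_{n\to\infty} b_{n+1}/b_n$ exists, every $\limsup$ and $\liminf$ occurring in that theorem is in fact an honest limit, so the upper and lower expressions coincide and the formulas collapse to the stated closed forms. Thus the whole proof reduces to evaluating finitely many quotients of $b$'s.

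The one computational ingredient I would isolate first is the elementary fact that for every fixed nonnegative integer $m$,
\[
\lim_{n\to\infty}\frac{b_{n-m}}{b_n}=\lim_{n\to\infty}\prod_{i=1}^{m}\frac{b_{n-i}}{b_{n-i+1}}=C^{-m},
\]
a finite telescoping product of factors each tending to $1/C$. From this, for every fixed $j\ge 1$,
\[
\frac{b_{n-j+2}-b_{n-j+1}}{b_n}=\frac{b_{n-j+1}}{b_n}\Bigl(\frac{b_{n-j+2}}{b_{n-j+1}}-1\Bigr)\longrightarrow C^{-(j-1)}(C-1)=\frac{C-1}{C^{j-1}},
\]
and likewise $\frac{b_{n-j+1}-b_{n-j}}{b_n}\longrightarrow \frac{C-1}{C^{j}}$; for $j=1$ these specialise to $(b_{n+1}-b_n)/b_n\to C-1$ and $(b_n-b_{n-1})/b_n\to (C-1)/C$, which is exactly why $\omega=C-1$ and $\widehat\omega=\omega_2=(C-1)/C$.

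Substituting these limits into Theorem \ref{satz2} then gives, for $1\le j\le k-1$, the equalities $\omega_j=(C-1)/C^{j-1}$ and $\widehat\omega_j=(C-1)/C^{j}$, i.e. precisely the displayed chain ending with $\omega_{k-1}=(C-1)/C^{k-2}=\widehat\omega_{k-2}$ and $\widehat\omega_{k-1}=(C-1)/C^{k-1}$. The three remaining lines come the same way from the inequality part of Theorem \ref{satz2}, evaluating the relevant limits: $\omega_k\ge (C-1)/C^{k-1}$, $\widehat\omega_k\ge (C-1)/C^{k}$, $\omega_{k+1}\ge (C-1)/C^{k}$ and $\widehat\omega_{k+1}\ge (C-1)/C^{k+1}$; the intermediate relation $\omega_{k+1}\ge\widehat\omega_k$ in the statement is the general inequality obtained from (\ref{eq:2}) together with (\ref{eq:rmuo}).

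There is essentially no obstacle here: the argument is pure substitution of a single limit into finitely many ratios. The only point deserving a word of care is that Theorem \ref{satz2} is stated under $\liminf_n b_{n+1}/b_n>2$, so for $C=2$ exactly one invokes the fact (noted in the remark following that theorem, and visible from its proof, since the estimates \eqref{eq:mimena}, \eqref{eq:memina} already hold under $b_{n+1}/b_n>2$) that the conclusion persists on the boundary; for $C>2$ the theorem applies verbatim.
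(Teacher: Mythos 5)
Your proof is correct and takes essentially the same route as the paper: the paper's entire argument is the one-line observation that $\lim_{n\to\infty} b_{n+h}/b_n = C^h$ by telescoping, after which Theorem~\ref{satz2} gives all the formulas; your telescoping of $b_{n-m}/b_n\to C^{-m}$ and the subsequent substitution is the same computation, and your derivation of $\omega_{k+1}\ge\widehat\omega_k$ from \eqref{eq:2} and \eqref{eq:rmuo} correctly supplies a step the paper leaves implicit.

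One small inaccuracy in your closing caveat: the remark following Theorem~\ref{satz2} addresses only the case $C=\infty$, not the boundary $C=2$, so it does not support the claim you attribute to it. In fact the boundary case cannot occur: the corollary retains the hypotheses of Theorem~\ref{satz2}, which include $\liminf_n b_{n+1}/b_n>2$, and if the limit exists it equals that $\liminf$, so $C>2$ is automatic. The ``$\ge 2$'' in the corollary's statement is therefore vacuous slack rather than something requiring a separate boundary argument, and your extra paragraph, while harmless, is unnecessary.
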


\begin{proof}
For every $h\geq 1$ we have

\[
 \lim_{n\to\infty} \frac{b_{n+h}}{b_{n}}= \lim_{n\to\infty} \frac{b_{n+h}}{b_{n+h-1}}\frac{b_{n+h-1}}{b_{n+h-2}}\cdots \frac{b_{n+1}}{b_{n}}=C^{h}
\]
 and hence Theorem \ref{satz2} yields the claimed result.
\end{proof}
\noindent Remark: The bounds for $\omega_{k},\widehat{\omega}_{k},\omega_{k+1},\widehat{\omega}_{k+1}$ could be improved further to

\begin{eqnarray*}
 \frac{C-1}{C^{k-1}} \quad \leq \quad \omega_{k} &\leq& \max\left\{ \frac{C}{C^{k}-1},\frac{C-1}{C^{k-1}}\right\}   \\
 \widehat{\omega}_{k}&= &\frac{C-1}{C^{k}-1}   \\
 \omega_{k+1}&=& \frac{1}{C^{k-1}}             \\
 \min\left\{\frac{1}{C^{k}-1},\frac{C-1}{C^{k}}\right\} \quad \leq \quad \widehat{\omega}_{k+1} &\leq& \frac{1}{C^{k}-1}     
\end{eqnarray*}
by a rather long and technical proof that we will not present here. In particular in the case $C\geq \beta_{k}>2$, where $\beta_{k}$
is the largest real root of $P_{k}(x)=x^{k+1}-2x^{k}-x+1$, we have

\begin{eqnarray*}
 \omega_{k}&=& \frac{C-1}{C^{k-1}}    \\
 \widehat{\omega}_{k}&= &\frac{C-1}{C^{k}-1}   \\
 \omega_{k+1}&=& \frac{1}{C^{k-1}}              \\
 \widehat{\omega}_{k+1}&=& \frac{1}{C^{k}-1}.
\end{eqnarray*}
Let us call the assumptions of Theorem \ref{satz2} without the growth condition of $\frac{b_{n+1}}{b_{n}}$ the {\em basic assumptions}
of Theorem \ref{satz2} in the sequel.
We can generelize the idea of the proof of Theorem \ref{satz2} to get

\begin{theo} \label{satz6}
Given the basic assumptions of Theorem \ref{satz2}, we consider some fixed $d\in\{1,2,\ldots,k-1\}$ 
and define $\kappa_{d}$ to be the largest real root 
of $P_{d}(x):=x^{d}-x^{d-1}-1$. Then if 

\begin{enumerate}
 \item $\frac{b_{n+1}}{b_{n}}>\kappa_{d},\qquad \text{for all} \quad n\geq 1$,  \\
\noindent \item the sequence $(d_{n})_{n\geq 1}:=(b_{n+1}-b_{n})_{n\geq 1}$ is monotonically increasing \\
\end{enumerate}
\noindent are satisfied, the first $(k-d)$ approximation constants are given by

 \begin{eqnarray*}
 \omega&=& \limsup_{n\to \infty} \frac{b_{n+1}-b_{n}}{b_{n}}, \qquad \qquad \quad \quad \quad \widehat{\omega}= \liminf_{n\to\infty} \frac{b_{n}-b_{n-1}}{b_{n}}  \\
 \omega_{2}&=& \limsup_{n\to\infty} \frac{b_{n}-b_{n-1}}{b_{n}}, \qquad \qquad \quad \quad \quad \widehat{\omega}_{2}= \liminf_{n\to\infty} \frac{b_{n-1}-b_{n-2}}{b_{n}}  \\
 \omega_{3}&=& \limsup_{n\to\infty} \frac{b_{n-1}-b_{n-2}}{b_{n}}, \qquad \qquad \quad \quad \widehat{\omega}_{3}= \liminf_{n\to\infty} \frac{b_{n-2}-b_{n-3}}{b_{n}}  \\
 &\vdots& \quad \vdots \qquad \vdots \qquad \qquad \qquad \qquad \qquad \qquad \quad \vdots\qquad \vdots \qquad \vdots \\
 \omega_{k-d}&=& \limsup_{n\to\infty} \frac{b_{n-k+d+2}-b_{n-k+d+1}}{b_{n}}, \quad \widehat{\omega}_{k-d}= \liminf_{n\to\infty} \frac{b_{n-k+d+1}-b_{n-k+d}}{b_{n}}.
\end{eqnarray*}
Furthermore we have the inequalities 

\begin{eqnarray*}
 \omega_{k-d+1}&\geq& \limsup_{n\to\infty} \frac{b_{n-k+d+1}-b_{n-k+d}}{b_{n}}, \quad \widehat{\omega}_{k-d+1}\geq \liminf_{n\to\infty} \frac{b_{n-k+d+1}-b_{n-k+d}}{b_{n}}\\
&\vdots& \quad \vdots \qquad \vdots \qquad \qquad \qquad \qquad \qquad \qquad \vdots \qquad \vdots \qquad \vdots  \\
\omega_{k+1}&\geq& \limsup_{n\to\infty} \frac{b_{n-k+1}-b_{n-k}}{b_{n}}, \qquad \quad \quad \widehat{\omega}_{k+1}\geq \liminf_{n\to\infty} \frac{b_{n-k}-b_{n-k-1}}{b_{n}}.
\end{eqnarray*}
\end{theo}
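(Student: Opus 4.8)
The plan is to follow the architecture of the proof of Theorem~\ref{satz2}, which in the case $d=1$ (so $\kappa_1=2$) is essentially the present statement. Write $\wp_j$ and $\widehat\wp_j$ for the $\limsup$-- and $\liminf$--expressions on the right of the theorem. I would first prove the lower bounds $\omega_j\ge\wp_j$ and $\widehat\omega_j\ge\widehat\wp_j$ for \emph{all} $1\le j\le k+1$ (for $j>k-d$ these are exactly the asserted inequalities), and then the matching upper bounds $\omega_j\le\wp_j$, $\widehat\omega_j\le\widehat\wp_j$ for $1\le j\le k-d$ only; together these give the equalities in the range $j\le k-d$.

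For the lower bounds nothing new is needed: for large $h$ put $x^{(i)}=c_{h+1-i}$ and $y^{(i)}_t=\lfloor\zeta_t x^{(i)}\rfloor$ ($1\le i\le j$, $1\le t\le k$); by (\ref{eq:erwin}) one has $\max_{i,t}\|\zeta_t x^{(i)}\|=c_{h+1-j}/c_{h+2-j}+o(c_{h+1-j}/c_{h+2-j})$, where the monotonicity of $(d_n)_{n\ge 1}$, $d_n:=b_{n+1}-b_n$, is precisely what makes $i=j$ the dominating index (it forces the quotients $c_m/c_{m+1}$ to be decreasing in $m$). Linear independence of these $j$ vectors follows from the same row reduction of the matrix $B_n$ to $C_n$ as in Theorem~\ref{satz1}. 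Evaluating the approximation functional to base $c_h$, resp.\ $\tfrac15 c_{h+1}$, and choosing $h$ along a sequence realising the relevant $\limsup$, resp.\ $\liminf$, gives $\omega_j\ge\wp_j$, resp.\ $\widehat\omega_j\ge\widehat\wp_j$.

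For the upper bounds, fix $X$, let $h$ be given by $c_h\le X<c_{h+1}$, reduce to $c_h\le X<\tfrac14 c_{h+1}$ by the base--change argument of step~3 of Theorem~\ref{satz1}, and classify every approximation vector $(x,y_1,\dots,y_k)$ by whether $g>h+1-k$ (\emph{category~1}) or $g\le h+1-k$ (\emph{category~2}), $c_{g-1}$ being the largest $c_m$ dividing $x$. Steps~1 and 2 of the proof of Theorem~\ref{satz1}, i.e.\ the estimates (\ref{eq:mimena}), (\ref{eq:memina}), (\ref{eq:wilena}) and the divisibility obstruction to linear independence of $j$ vectors, remain valid here. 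The one thing to verify is that their $o$--terms, which were absorbed in Theorem~\ref{satz2} by the growth $b_{n+1}/b_n>2$, stay negligible under the weaker $b_{n+1}/b_n>\kappa_d$ together with the monotonicity of $(d_n)$: the error in (\ref{eq:mimena}) is at most $4c_{h+1}/c_{g+k}$, so the ratio error$/(c_{g-1}/c_g)$ is $\le 4\exp\!\big(b_{h+1}+b_g-b_{g+k}-b_{g-1}\big)$, and the function $g\mapsto b_{h+1}+b_g-b_{g+k}-b_{g-1}$ is non--increasing by monotonicity of $(d_n)$, hence on $g>h+1-k$ it is maximal at $g=h+2-k$, where it is $\le b_{h+1}\big(1-\kappa_d+\kappa_d^{\,1-k}\big)\to-\infty$ since $d\le k-1$ forces $\kappa_d^{\,k-1}(\kappa_d-1)>\kappa_d^{\,d-1}(\kappa_d-1)=1$ (the $o$--terms of (\ref{eq:memina}), (\ref{eq:wilena}) are controlled the same way). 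With this, category~1 gives $\omega_j^1\le\wp_j$ and $\widehat\omega_j^1\le\widehat\wp_j$ for every $j$, exactly as in Theorem~\ref{satz2}.

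The genuinely new input, and the step I expect to be the main obstacle, is category~2. By (\ref{eq:memina}) any category--2 vector satisfies $\max_t\|\zeta_t x\|\ge c_{h+2-k}^{-1}-o(c_{h+2-k}^{-1})$, hence $\omega_j^2(X)\le b_{h+2-k}/\log X+o(1)$. The decisive numerical fact, valid for $j\le k-d$ and all large $h$, is
\[
 b_{h+2-k}\;<\;b_{h+2-j}-b_{h+1-j}\;=\;d_{h+1-j},
\]
which follows from $b_{n+1}/b_n>\kappa_d$ and the defining equation of $\kappa_d$:
\[
 d_{h+1-j}>(\kappa_d-1)\,b_{h+1-j}>(\kappa_d-1)\,\kappa_d^{\,k-1-j}\,b_{h+2-k}\ge(\kappa_d-1)\,\kappa_d^{\,d-1}\,b_{h+2-k}=b_{h+2-k},
\]
the last equality being $P_d(\kappa_d)=0$ and the inequality $k-1-j\ge d-1$ holding because $j\le k-d$. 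Since step~1 likewise gives $\omega_j^1(X)\le(b_{h+2-j}-b_{h+1-j})/\log X+o(1)$, combining the two bounds yields $\omega_j(X)=\max\{\omega_j^1(X),\omega_j^2(X)\}\le(b_{h+2-j}-b_{h+1-j})/\log X+o(1)$ for every admissible $X$. Taking $\limsup_{X\to\infty}$ (using $\log X\ge b_h$) gives $\omega_j\le\wp_j$; evaluating along $X_r=\tfrac15 c_{h_r+1}$ with $h_r+1$ chosen to realise the $\liminf$ defining $\widehat\wp_j$ gives $\widehat\omega_j\le\widehat\wp_j$. Thus the conceptual content is the tightness of the above exponent arithmetic at the root $\kappa_d$ of $P_d$; the remaining work is careful bookkeeping of the $o$--terms inherited from Theorems~\ref{satz1} and \ref{satz2} under the two hypotheses of the theorem.
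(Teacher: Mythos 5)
Your proposal is correct and takes essentially the same route as the paper, whose proof of Theorem \ref{satz6} simply reruns the argument of Theorem \ref{satz2} with the monotonicity of $(d_n)$ replacing the decrease of $\frac{c_m}{c_{m+1}}$ and with the category-2 bound degraded from level $k-1$ to level $k-d$. Your explicit control of the $o$-terms under the weaker growth and the identity $(\kappa_d-1)\kappa_d^{d-1}=1$ merely spell out the details the paper leaves implicit at exactly those points.
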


\begin{proof}
 We proceed as in the proof of Theorem \ref{satz2} using the fact that $d_{n}$ 
is increasing in place of the equivalent fact that $\frac{c_{m}}{c_{m+1}}$ is monotonically decreasing, 
which we deduced from the stronger assumptions in Theorem \ref{satz2}
 (where it was infered from the stronger assumptions in Theorem \ref{satz2}) up to equation (\ref{eq:qply}).
Instead of (\ref{eq:qply}), by our weaker assumption
$\frac{b_{n+1}}{b_{n}}>\kappa_{d}$ instead of $\frac{b_{n+1}}{b_{n}}>2>\kappa_{d}$, we obtain the weaker upper bound

\begin{equation*} 
 \omega_{j}^{2}(X_{r})\leq -\log_{X_{r}}\left(\frac{1}{c_{h_{r}+d+1-k}}\right)< -\log_{X_{r}}\left(\frac{c_{h_{r}+d+1-k}}{c_{h_{r}+d+2-k}}\right)\leq \omega_{k-d}^{1}(X_{r}),
\end{equation*}
which yields $\wp_{k-d}$ as an upper bound for $\omega_{j}$ instead of $\wp_{k-1}$.
We proceed analogously again till (\ref{eq:ronzor}), instead of which we obtain

\begin{equation*} 
 \omega_{j}^{2}(X_{r})\leq -\log_{X_{r}}\left(\frac{1}{c_{h_{r}+d+1-k}}\right)< -\log_{X_{r}}\left(\frac{c_{h_{r}+d+1-k}}{c_{h_{r}+d+2-k}}\right)\leq \omega_{k-d}^{1}(X_{r}).
\end{equation*}
This gives $\widehat{\wp}_{k-d}$ as an upper bound for $\widehat{\omega}_{j}$ instead of $\widehat{\wp}_{k-1}$.
The remainder of the proof is essentially the same as in Theorem \ref{satz2}.  
\end{proof}

\noindent Remark: We clearly have $\lim_{d\to\infty}\kappa_{d}=1$. On the other hand $1+\frac{1}{d}<\kappa_{d}<2$
which can esily be derived using the well known monotonic convergence of $(1+\frac{1}{n})^{n}$ to the Euler number $e\approx 2.71$.\\
\noindent Again, we easily deduce the following

\begin{coro}  \label{kor2}
Let the basic assumptions of Theorem \ref{satz2} and condition 1) from Theorem \ref{satz6} be satisfied. Let us further assume

\[
 \lim_{n\to\infty} \frac{b_{n+1}}{b_{n}}=C\geq \kappa_{d}.
\]
Then the first $(k-d)$ approximation constants are given by

\begin{eqnarray*}
 \omega \quad = &C-1&    \\
 \omega_{2} \quad = &\frac{C-1}{C}&=  \quad \widehat{\omega}                    \\
 \omega_{3} \quad = &\frac{C-1}{C^{2}}&=  \quad \widehat{\omega}_{2}               \\
 \vdots \qquad &\vdots& \quad \vdots                               \\
 \omega_{k-d} \quad = &\frac{C-1}{C^{k-d-1}}&=  \quad \widehat{\omega}_{k-d-1}    \\
 &\frac{C-1}{C^{k-d}}&=  \quad \widehat{\omega}_{k-d}.
\end{eqnarray*}
Further more we have the inequalities 

\begin{eqnarray*}
\omega_{k-d+1}&\geq& \frac{C-1}{C^{k-d}}                                    \\
\omega_{k-d+2} \quad \geq \quad \widehat{\omega}_{k-d+1}&\geq& \frac{C-1}{C^{k-d+1}}     \\
\vdots \qquad &\vdots& \qquad \vdots                                        \\
\omega_{k+1} \quad \geq \quad \widehat{\omega}_{k}&\geq& \frac{C-1}{C^{k}}               \\
 \widehat{\omega}_{k+1}&\geq& \frac{C-1}{C^{k+1}}.
\end{eqnarray*}
\end{coro}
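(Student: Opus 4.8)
The plan is to derive Corollary~\ref{kor2} from Theorem~\ref{satz6} in exactly the way Corollary~\ref{korolar} was derived from Theorem~\ref{satz2}: once the limit $\lim_{n\to\infty} b_{n+1}/b_{n}=C$ exists, every $\limsup$ and every $\liminf$ occurring in the conclusion of Theorem~\ref{satz6} is in fact an honest limit, and each of these limits can be evaluated explicitly in terms of $C$.

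First I would record that for every fixed integer $h\ge 0$,
\[
 \lim_{n\to\infty}\frac{b_{n+h}}{b_{n}}=\prod_{i=1}^{h}\lim_{n\to\infty}\frac{b_{n+i}}{b_{n+i-1}}=C^{h},
\]
and, after replacing $n$ by $n-h$ and using $b_{n}\to\infty$, also $\lim_{n\to\infty} b_{n-h}/b_{n}=C^{-h}$. Next I would check that the hypotheses of Theorem~\ref{satz6} are genuinely in force. Condition~1) is assumed outright. Condition~2), that $d_{n}:=b_{n+1}-b_{n}$ be monotonically increasing, is the only point needing an argument, since it does not appear among the stated hypotheses of the Corollary; but it suffices that it hold for all sufficiently large $n$, because $\omega_{j}$ and $\widehat{\omega}_{j}$ are defined through limits as $X\to\infty$ and hence are unaffected by altering finitely many of the integers $q_{n,j}$. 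Since
\[
 \frac{d_{n+1}-d_{n}}{b_{n+1}}=\frac{b_{n+2}}{b_{n+1}}+\frac{b_{n}}{b_{n+1}}-2\;\longrightarrow\;C+\frac1C-2=\frac{(C-1)^{2}}{C}>0
\]
(using $C\ge\kappa_{d}>1$), we obtain $d_{n+1}>d_{n}$ for all large $n$, so Theorem~\ref{satz6} does apply.

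It then remains to substitute the limits of the first step into the formulas of Theorem~\ref{satz6}. For $1\le m\le k-d$ this gives
\[
 \omega_{m}=\lim_{n\to\infty}\frac{b_{n-m+2}-b_{n-m+1}}{b_{n}}=\lim_{n\to\infty}\frac{b_{n-m+2}}{b_{n}}\Bigl(1-\frac{b_{n-m+1}}{b_{n-m+2}}\Bigr)=C^{-(m-2)}\Bigl(1-\frac1C\Bigr)=\frac{C-1}{C^{m-1}},
\]
and likewise $\widehat{\omega}_{m}=C^{-(m-1)}(1-\frac1C)=(C-1)/C^{m}$; the identical computation applied to the inequalities of Theorem~\ref{satz6} for $k-d+1\le j\le k+1$ produces $\omega_{j}\ge(C-1)/C^{j-1}$ and $\widehat{\omega}_{j}\ge(C-1)/C^{j}$, while the interlacing $\omega_{j+1}\ge\widehat{\omega}_{j}$ in the displayed chains follows at once from (\ref{eq:2}) together with (\ref{eq:rmuo}). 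There is really no serious obstacle here; the only step that is not wholly mechanical is the verification in the middle paragraph that the monotonicity condition~2) of Theorem~\ref{satz6} holds automatically in the present setting, and that rests on nothing more than the elementary inequality $C+C^{-1}>2$, valid since $C\neq 1$.
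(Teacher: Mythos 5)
Your proof is correct and follows the route the paper itself suggests but never writes out: the paper dispatches Corollary~\ref{kor2} with ``we easily deduce'' and no proof, the implicit argument being the same one-line substitution that proved Corollary~\ref{korolar} from Theorem~\ref{satz2}, namely $\lim_n b_{n+h}/b_n = C^{h}$ turns every $\limsup$/$\liminf$ in Theorem~\ref{satz6} into a limit equal to $(C-1)/C^{j-1}$ or $(C-1)/C^{j}$.

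What you add, and what the paper silently skips, is the verification that Theorem~\ref{satz6} is actually applicable: its hypothesis~2) (monotonicity of $d_{n}=b_{n+1}-b_{n}$) is not listed among the assumptions of Corollary~\ref{kor2}. Your computation
\[
 \frac{d_{n+1}-d_{n}}{b_{n+1}}=\frac{b_{n+2}}{b_{n+1}}+\frac{b_{n}}{b_{n+1}}-2\longrightarrow C+\frac{1}{C}-2=\frac{(C-1)^{2}}{C}>0
\]
is the right observation, and since $C\ge\kappa_{d}>1$ it shows $d_{n}$ is eventually increasing. Your justification that ``eventually'' suffices rests on the (true but unstated) fact that altering finitely many $q_{n,j}$ changes each $\zeta_{j}$ by a rational number, under which all the $\omega_{j},\widehat{\omega}_{j}$ are invariant; a slightly more economical remark would be that the proof of Theorem~\ref{satz6} uses monotonicity of $d_{n}$ only asymptotically, so the conclusion survives if condition~2) holds merely for large $n$. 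Either way the step is sound. One small caveat worth flagging: the interlacings $\omega_{j+1}\ge\widehat{\omega}_{j}$ appearing in the displayed chains invoke (\ref{eq:rmuo}), which in turn requires $1,\zeta_{1},\dots,\zeta_{k}$ to be $\mathbb{Q}$-linearly independent; Corollary~\ref{kor2} as stated does not assume this, but the paper treats it as implicit in Corollary~\ref{korolar} as well, so you are not introducing a new gap, only inheriting one.
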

\noindent Let us illustrate the results of Corollary \ref{korolar} and Corollary \ref{kor2} in the case $k=3,C=2$.
In the first plot we put
 \[
 \zeta_{1}^{\prime}= 2^{-1}+2^{-15}, \quad \zeta_{2}^{\prime}= 2^{-3}+2^{-31},\quad \zeta_{3}^{\prime}= 2^{-7}
\]
which (for numerical purposes) are the initial terms of

\[
 \zeta_{1}=\sum_{n\geq 0} 2^{-2^{3n+1}+1}, \quad \zeta_{2}=\sum_{n\geq 0}2^{-2^{3n+2}+1}, \quad \zeta_{3}=\sum_{n\geq 1}2^{-2^{3n}+1},
\]
which clearly satisfy the conditions of Corollary \ref{korolar} with $C=2$. 

\begin{figure}[h!]
 \scalebox{0.65}{\centering
 \includegraphics{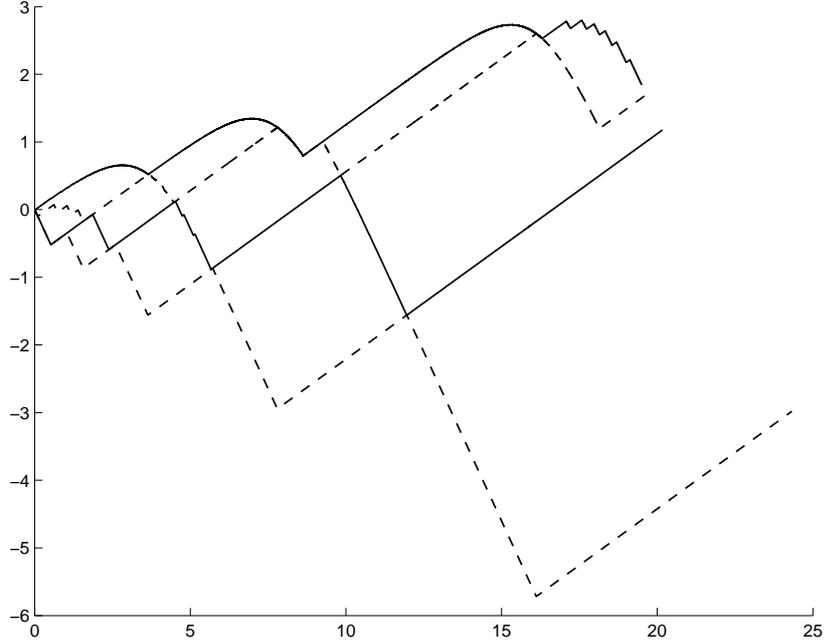}}
 \caption{k=3, C=2; illustrates Corollary 2.11}
\end{figure} 
\noindent Notice the special behaviour of $L_{k}=L_{3},L_{k+1}=L_{4}$ in comparison to the first $(k-1)=2$ functions which behave
as predicted in Corollary \ref{korolar}. \\
\noindent The assumptions of Corollary \ref{kor2} are weaker in the sense that either $C<2$ or the quotients $\frac{b_{n+1}}{b_{n}}$
converge to $C=2$ without being strictly larger than $2$ for every sufficiently large $n$. 
To illustrate this latter case we may put

\[
 \zeta_{1}^{\prime}=2^{-2}+2^{-9}, \quad \zeta_{2}^{\prime}= 2^{-3}+2^{-17}, \quad \zeta_{3}^{\prime}= 2^{-5}+2^{-33}.
\]
which are the initial terms of 

\[
 \zeta_{1}=\sum_{n\geq 0} 2^{-a_{3n+1}}, \quad \zeta_{2}= \sum_{n\geq 0} 2^{-a_{3n+2}},\quad \zeta_{3}=\sum_{n\geq 1} 2^{-a_{3n}}
\]
with $a_{1}=2$ and $a_{n+1}=2a_{n}-1$ for all $n\geq 1$,
which fufills the conditions of Corollary \ref{kor2} with $d=2$.
 Indeed, we will 
 see a different behaviour of $L_{2}$ compared to the previous
picture. Only $L_{k-d}=L_{1}$ has the predicted shape.

\begin{figure}[h!]
 \scalebox{0.65}{\centering
 \includegraphics{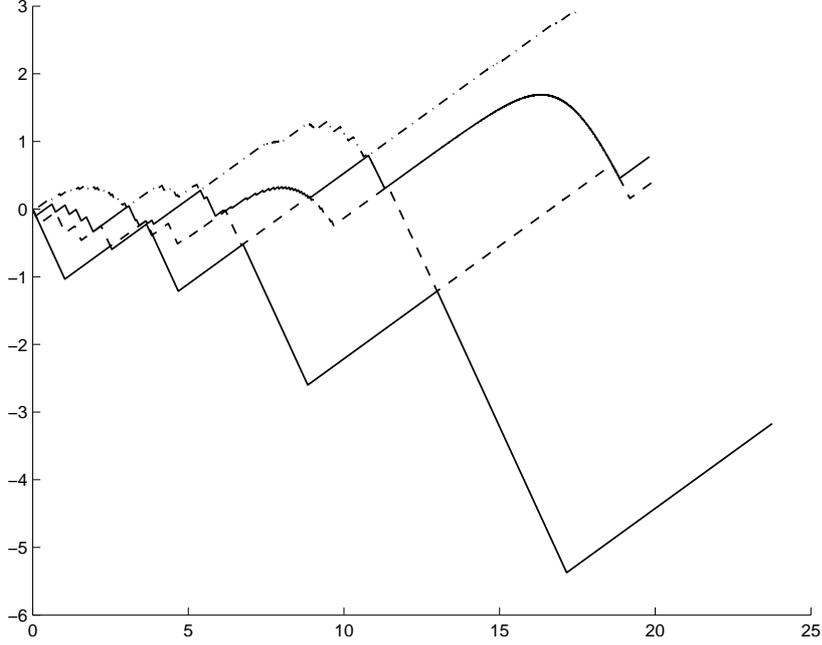}}
 \caption{k=3, C=2; illustrates Corollary 2.13}
\end{figure} 
\noindent We can apply Corollary \ref{kor2} to construct many more cases of Schmidt's conjecture explicitely.
For simplicity of the proof we first deduce another easy Corollary from Corollary \ref{kor2}.

\begin{coro} \label{coroll}
 For $k\geq 2$ let $1\leq d\leq k-1$ be an arbitrary integer.
For any $C>\kappa_{d}$ there exists a sequence of positive
integers $(b_{n})_{n\geq 1}$ such that $\frac{b_{n+1}}{b_{n}}>C$ for all $n$ and
$\lim_{n\to\infty}\frac{b_{n+1}}{b_{n}}=C$.
If $(a_{n,j})_{n\geq 1}$ for $1\leq j\leq k$ are $k$ sequences satisfying (\ref{eq:frure}) such that
$(b_{n})_{n\geq 1}$ is their ordered mixed sequence, then for $\zeta_{j}=\sum_{n\geq 1}2^{-a_{n,j}}$ the result
of Corollary \ref{kor2} is valid. \\
Furthermore we can choose the sequence $(b_{n})_{n\geq 1}$ such that $1,\zeta_{1},\ldots ,\zeta_{k}$
are $\mathbb{Q}$-linearly independent.
\end{coro}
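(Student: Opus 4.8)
The plan is to produce the sequence $(b_n)_{n\ge 1}$ by an explicit recursion, to observe that feeding it into $\zeta_j=\sum_{n\ge1}2^{-a_{n,j}}$ puts us literally into the situation of Corollary \ref{kor2}, and finally to upgrade to $\mathbb{Q}$-linear independence by the perturbation device from the proof of Proposition \ref{prop1}. For the construction I would fix a large positive integer $b_1$ and set $b_{n+1}:=\lceil Cb_n\rceil+\lceil C\rceil+1$ for $n\ge1$. From $\lceil Cb_n\rceil\ge Cb_n$ one gets $b_{n+1}>Cb_n$, hence $b_{n+1}/b_n>C$ for every $n$; since moreover $b_{n+1}\le Cb_n+\lceil C\rceil+2$ and $C>\kappa_d>1$ forces $b_n\to\infty$ geometrically, one has $C<b_{n+1}/b_n\le C+(\lceil C\rceil+2)/b_n\to C$, so $\lim_n b_{n+1}/b_n=C$. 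Also $b_{n+1}\ge b_n+2$, so $(b_n)$ is strictly increasing, and with $b_1\ge 2/(C-1)^2$ one checks that the differences $b_{n+1}-b_n$ are monotonically increasing, which secures condition 2) of Theorem \ref{satz6} should it be needed.

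Next I would put $a_{n,j}:=b_{(n-1)k+j}$ and $q_{n,j}:=2^{a_{n,j}}$. Then (\ref{eq:frure}) holds because $(b_n)$ is increasing, $(b_n)$ is the ordered mixed sequence of the $(a_{n,j})_n$, and conditions (\ref{eq:qq}),(\ref{eq:erwin}) are automatic, since powers of $2$ increase with the exponent and $2^{a}\mid 2^{a'}$ whenever $a\le a'$; thus the basic assumptions of Theorem \ref{satz2} are met. The sequence denoted $(b_n)$ inside Theorems \ref{satz2},\ref{satz6} and Corollary \ref{kor2} is the ordered mixed sequence of $(\log q_{n,j})=(a_{n,j}\log 2)$, i.e. it is $\log 2$ times the sequence constructed above; as condition 1) of Theorem \ref{satz6} (namely $b_{n+1}/b_n>\kappa_d$, which follows from $b_{n+1}/b_n>C>\kappa_d$) and the requirement $\lim b_{n+1}/b_n=C\ge\kappa_d$ involve only the scale-invariant ratios $b_{n+1}/b_n$, they pass over unchanged, so Corollary \ref{kor2} applies and yields the asserted values of $\omega_j$ and $\widehat{\omega}_j$.

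For the last clause I would argue exactly as in Proposition \ref{prop1}: replacing $b_a$ by $b_a+1$ for the indices $a$ in an arbitrary subset $A\subseteq\mathbb{N}$ leaves the sequence admissible, because the additive surplus $\lceil C\rceil+1$ built into the recursion guarantees $b_{n+1}>C(b_n+1)$, so $b_{n+1}/b_n>C$ survives every such $\pm 1$ change, strict monotonicity is preserved ($b_{n+1}\ge b_n+2$), and the ratios still tend to $C$ because the perturbation stays bounded while $b_n\to\infty$. Letting, for one fixed coordinate $j$, the membership of the indices of the form $(n-1)k+j$ in $A$ range over all subsets of $\mathbb{N}$ produces $2^{\aleph_0}$ pairwise distinct values of $\zeta_j$ without disturbing the remaining coordinates (consecutive $a_{n,j}$ differ by much more than $1$, so the perturbed binary expansions are genuinely different); since a finite-dimensional $\mathbb{Q}$-vector space is countable, one then picks $\zeta_1\notin\mathbb{Q}$, then $\zeta_2\notin\mathbb{Q}+\mathbb{Q}\zeta_1$, and so on, reaching a $\mathbb{Q}$-linearly independent set $\{1,\zeta_1,\dots,\zeta_k\}$. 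The main obstacle, and the only reason the recursion is written with a constant additive surplus rather than the minimal one, is precisely this tension: the perturbations needed to force linear independence must not destroy the strict inequality $b_{n+1}/b_n>C$, so enough slack has to be reserved in advance.
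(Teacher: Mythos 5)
Your proposal is correct and follows essentially the same route as the paper: an explicit ceiling-type recursion for $(b_n)$, the observation that passing to $q_{n,j}=2^{a_{n,j}}$ only rescales the logarithms so the scale-invariant ratios $b_{n+1}/b_n$ are unchanged and Corollary \ref{kor2} applies, and the perturbation/countability argument of Proposition \ref{prop1} for the $\mathbb{Q}$-linear independence. Your extra additive surplus $\lceil C\rceil+1$ in the recursion is a small refinement ensuring the strict inequality $b_{n+1}/b_n>C$ visibly survives the $+1$ perturbations, a point the paper's terse proof (which uses $b_{n+1}=\lceil Cb_n\rceil$) leaves implicit.
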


\begin{proof}
 The sequence $(b_{n})_{n\geq 1}$ defined by $b_{1}=S$ and $b_{n+1}=\left\lceil Cb_{n}\right\rceil$
with $S$ sufficiently large that $b_{2}>b_{1}$ clearly satisfies the stated properties.
 Putting $q_{n,j}=2^{a_{n,j}}$ we see that the assumptions of Corollary \ref{kor2} are satisfied,
since it clearly makes no difference if we take the logarithm to base $2$ instead of $e$ as the quotients 
$\frac{b_{n+1}}{b_{n}}$ don't change. By a variation of $(b_{n})_{n\geq 1}$ as in the proof of 
Corollary \ref{prop1} we can guarantee the linear independence.
\end{proof}

\begin{coro}
With the notation of Corollary \ref{coroll} there exists a constant $R(k)$ such that
the following holds:
\begin{itemize}
 \item $R(k)> \frac{k}{\log(k)}$ for $k$ sufficiently large.\\
\noindent \item For fixed $3\leq T\leq R(k)$ there is some $C_{0}=C_{0}(T)$, such that
there exists a sequence $(b_{n})_{n\geq 1}$ of positive integers 
satisfying $\lim_{n\to\infty}\frac{b_{n+1}}{b_{n}}=C_{0}$ such that the corresponding vector
$(\zeta_{1},\ldots ,\zeta_{k})$ constructed via Corollary \ref{coroll} with $C=C_{0}$
has approximation constants that satisfy  

\begin{equation} \label{eq:frrranr}
 \overline{\psi}_{T-2}<0 \quad \text{and} \quad \underline{\psi}_{T}>0.
\end{equation} 
A possible choice of $R(k)$ is $R(k):=k-1+(k-2)\frac{\log\left(k^{\frac{1}{k-2}}-1\right)}{\log(k)}$. 
This provides {\em explicit examples} for Schmidt's Conjecture.

\end{itemize}

\end{coro}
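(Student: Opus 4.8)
Throughout we may take $k$ large (which is in any case the only regime in which the second assertion is non‑vacuous). The plan is to reduce the statement, via Corollary~\ref{coroll} and the duality (\ref{eq:2}), to a single elementary inequality in one real variable. Fix an integer $T$ with $3\le T\le R(k)$, put $d:=k-T$ (so $1\le d\le k-1$, because $R(k)<k-1$), and note that for any real $C_{0}>\kappa_{d}$ Corollary~\ref{coroll} supplies $\mathbb{Q}$‑linearly independent $1,\zeta_{1},\ldots,\zeta_{k}$ whose first $k-d=T$ pairs of approximation constants are those of Corollary~\ref{kor2}; in particular $\omega_{T}=\frac{C_{0}-1}{C_{0}^{T-1}}$ and $\widehat{\omega}_{T-2}=\frac{C_{0}-1}{C_{0}^{T-2}}$ (this uses $T\ge3$, so that the index $T-2$ is still in the determined range). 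By (\ref{eq:2}) one has $\underline{\psi}_{T}=\frac{1-k\omega_{T}}{k(1+\omega_{T})}$ and $\overline{\psi}_{T-2}=\frac{1-k\widehat{\omega}_{T-2}}{k(1+\widehat{\omega}_{T-2})}$, so the requirement (\ref{eq:frrranr}) becomes $\omega_{T}<\frac1k<\widehat{\omega}_{T-2}$, i.e.
\[
 C_{0}^{\,T-2}<k(C_{0}-1)<C_{0}^{\,T-1}.
\]
Hence everything comes down to producing one $C_{0}>\kappa_{k-T}$ satisfying this double inequality.

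Set $\rho(C):=\frac{\log(k(C-1))}{\log C}$ for $C>1$; the double inequality is exactly $\rho(C_{0})\in(T-2,\,T-1)$. Now $\rho$ is continuous, $\rho(C)\to1$ as $C\to\infty$, and I claim $\rho$ is strictly decreasing on $[C_{1},\infty)$ for $C_{1}:=k^{1/(k-2)}$ once $k$ is large. Indeed $\rho'(C)<0$ is equivalent to $\frac{C\log C}{C-1}<\log k+\log(C-1)$; the function $C\mapsto \log k+\log(C-1)-\frac{C\log C}{C-1}$ has derivative $\frac{\log C}{(C-1)^{2}}>0$ on $(1,\infty)$, and at $C=C_{1}$, where $\log C_{1}=\frac{\log k}{k-2}$ and $C_{1}-1\sim\frac{\log k}{k-2}$, the left side tends to $1$ while the right side grows like $\log\log k$; hence the inequality holds at $C_{1}$, and so on all of $[C_{1},\infty)$. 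A direct computation gives the key identity
\[
 \rho(C_{1})=(k-2)\Big(1+\tfrac{\log(k^{1/(k-2)}-1)}{\log k}\Big)=R(k)-1,
\]
which is exactly how $R(k)$ is chosen.

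Given $3\le T\le R(k)$, the number $T-\tfrac32$ lies strictly between $1=\lim_{C\to\infty}\rho(C)$ and $R(k)-1=\rho(C_{1})$, so by the intermediate value theorem there is $C_{0}>C_{1}$ with $\rho(C_{0})=T-\tfrac32\in(T-2,T-1)$; this gives the double inequality. It remains to verify $C_{0}>\kappa_{k-T}$. Since $x\mapsto x^{d-1}(x-1)$ is strictly increasing on $(1,\infty)$ and $\kappa_{d}$ is its unique value‑$1$ point (recall $P_{d}(x)=x^{d-1}(x-1)-1$), it suffices that $C_{1}^{\,k-T-1}(C_{1}-1)\ge1$, which after taking logarithms and inserting $\log C_{1}=\frac{\log k}{k-2}$ is precisely the hypothesis $T\le k-1+\frac{(k-2)\log(k^{1/(k-2)}-1)}{\log k}=R(k)$; therefore $C_{0}>C_{1}\ge\kappa_{k-T}$, with strict inequality since $C_{0}>C_{1}$. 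Feeding this $C_{0}$ into Corollary~\ref{coroll} yields the claimed vector $(\zeta_{1},\ldots,\zeta_{k})$. Finally $k^{1/(k-2)}-1=e^{(\log k)/(k-2)}-1\sim\frac{\log k}{k}$ gives $R(k)=\frac{k\log\log k}{\log k}(1+o(1))$, hence $R(k)>k/\log k$ for all sufficiently large $k$, which is the first assertion.

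The principal difficulty is that the two requirements on $C_{0}$ pull against each other: making $\rho(C_{0})$ land in the length‑one window $(T-2,T-1)$ forces $C_{0}$ to be large when $T$ is small and close to $C_{1}=k^{1/(k-2)}$ when $T$ approaches the top of the range, whereas the bound $C_{0}>\kappa_{k-T}$ needed to apply Corollary~\ref{coroll} forbids $C_{0}$ from being too small. The corollary asserts that these are compatible exactly up to $T=R(k)$, and this is why $R(k)$ has to be defined through the distinguished point $C_{1}=k^{1/(k-2)}$: there $\rho(C_{1})=R(k)-1$, and simultaneously $C_{1}^{\,k-T-1}(C_{1}-1)=1$ precisely at $T=R(k)$. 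A secondary technical point is proving the monotonicity of $\rho$ on $[C_{1},\infty)$ uniformly for all large $k$, which boils down to the estimate $\frac{C_{1}\log C_{1}}{C_{1}-1}<\log k+\log(C_{1}-1)$, in essence $1<\log\log k$.
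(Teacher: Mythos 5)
Your argument is correct and in substance follows the paper's own route: both reduce (\ref{eq:frrranr}) via (\ref{eq:2}) to $\omega_{T}<\frac1k<\widehat{\omega}_{T-2}$, feed Corollary \ref{coroll}/Corollary \ref{kor2} with $d=k-T$, and find $C_{0}$ by an intermediate-value argument, with $R(k)$ arising from exactly the same comparison of $\kappa_{k-T}$ with $k^{1/(k-2)}$ (your check $C_{1}^{\,k-T-1}(C_{1}-1)\geq1$ is literally the paper's condition $P_{k-T}(k^{1/(k-2)})\geq0$); your single function $\rho$ is just a logarithmic repackaging of the paper's family $\Psi_{u}(x)=\frac{x-1}{x^{u}}$. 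Two small blemishes: the opening restriction to large $k$ is both unnecessary and wrongly justified, since the second assertion is already non-vacuous for moderate $k$ (e.g.\ $k=4$, $T=3$, where $R(4)=3$); and the monotonicity of $\rho$ on $[C_{1},\infty)$ is never actually used (your IVT step needs only continuity, $\rho(C_{1})=R(k)-1\geq T-1$ and $\rho(C)\to1$) and is in fact false at $C_{1}$ for some moderate $k$ such as $k=5,6,7$, so it is best deleted --- after which your proof covers every $k$ and every $T$ in the stated range, with the large-$k$ hypothesis retained only for the bound $R(k)>k/\log k$, where it belongs.
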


\begin{proof}
Let $k$ be an arbitrary but fixed integer. In view of (\ref{eq:2})  for a given $3\leq T\leq R(k)$
we need to find
$C_{0}=C_{0}(T)$ such that a vector $(\zeta_{1},\zeta_{2},\ldots ,\zeta_{k})$ that arises from Corollary \ref{coroll}
with $C=C_{0}$ satisfies $\widehat{\omega}_{T-2}>\frac{1}{k}>\omega_{T}$ 
 to obtain (\ref{eq:frrranr}). 
We will implicitly identify such a vector $(\zeta_{1},\zeta_{2},\ldots ,\zeta_{k})$
 with the resulting value $C$ from the limit of the quotients $\frac{b_{n+1}}{b_{n}}$ in Corollary \ref{coroll}.
This is well defined as the approximation constants we consider don't depend on the choice of the exact vector
but depend only on $C$ by Corollary \ref{coroll}. As was shown there the set of such vectors $(\zeta_{1},\zeta_{2},\ldots ,\zeta_{k})$ is
 nonempty for every $C>\kappa_{d}>1$ and we can assume $(\zeta_{1},\ldots ,\zeta_{k})$ 
together with $1$ to be $\mathbb{Q}$ linearly independent. \\
\noindent For any positive integer $u$ define the function $\Psi_{u}(x)=\frac{x-1}{x^{u}}$.
Each function $\Psi_{.}$ is easily seen to be
continuous and $\Psi_{u}$ increases on $[1,\frac{u}{u-1}]$ and decreases on $[\frac{u}{u-1},\infty)$ with limit $0$ as $x\to\infty$. \\
\noindent We use the notation of Theorem \ref{satz6}, in particular $\kappa_{d}$ is the largest real root of $P_{d}(x)=x^{d}-x^{d-1}-1$.
 We first prove that we can choose $C_{0}=C_{0}(T)$ such that (\ref{eq:frrranr}) holds for 
a given $T$ that has the property  

\begin{equation} \label{eq:turmm}
 \Psi_{T-1}(\kappa_{k-T})>\frac{1}{k}
\end{equation}
with the constructions of Corollary \ref{coroll} and the particular choice $C=C_{0}$.\\ 
Put $u=T-1$. If (\ref{eq:turmm}) is valid, the facts about the functions $\Psi_{u}$ show that there is $x>\kappa_{k-T}$
such that $\Psi_{T-1}(x)=\frac{1}{k}$ and $\Psi_{T-1}$ already decreases at $x$. 
Furthermore there is an interval $C_{0}\in{(x,x+\delta)}$ such that $\Psi_{T-1}(C_{0})<\frac{1}{k}<\Psi_{T-2}(C_{0})$. 
Since $C_{0}>x\geq \kappa_{k-T}$ we can apply Corollary \ref{coroll} with $d:=k-T, C:=C_{0}$  and obtain 
$\omega_{T}=\Psi_{T-1}(C_{0})<\frac{1}{k}<C_{0}\frac{1}{k}=C_{0}\Psi_{T-1}(C_{0})=\Psi_{T-2}(C_{0})<\widehat{\omega}_{T-2}$, as intended. \\

\noindent Now assume $k$ is fixed, $1\leq T\leq k+1$ and that for $C_{0}:=\kappa_{k-T}$ 

\begin{equation}  \label{eq:skkiforn}
 \frac{C_{0}-1}{C_{0}^{k-1}-C_{0}^{k-2}}>\frac{1}{k}  
\end{equation}
holds. By definition of $C_{0}=\kappa_{k-T}$ we have $C_{0}^{T-1}=C_{0}^{k-1}-C_{0}^{k-2}$ and (\ref{eq:skkiforn}) further implies

\begin{equation*} 
 \psi_{T-1}(C_{0})=\frac{C_{0}-1}{C_{0}^{T-1}}= \frac{C_{0}-1}{C_{0}^{k-1}-C_{0}^{k-2}}>\frac{1}{k}
\end{equation*}
for such $T$, i.e. (\ref{eq:turmm}). Combining what we have shown so far, (\ref{eq:frrranr}) can
be obtained for $T$ and $C_{0}(T)=\kappa_{k-T}$, provided (\ref{eq:skkiforn}) holds.
We now show that (\ref{eq:skkiforn}) is true for $3\leq T\leq R(k)$. \\
In view of $C_{0}>1$, inequality (\ref{eq:skkiforn}) is equivalent to $\kappa_{k-T}=C_{0}\leq k^{\frac{1}{k-2}}$. Since
$P_{k-T}$, whose largest root is $\kappa_{k-T}>1$, increases on the interval $[1,\infty)$,
 this is equivalent to $P_{k-T}(k^{\frac{1}{k-2}})\geq 0$, i.e. 

\[
 k^{\frac{k-T}{k-2}}-k^{\frac{k-T-1}{k-2}}-1>0.
\]
Basic rearrangements show this is equivalent to

\[
 T\leq k-1+(k-2)\frac{\log\left(k^{\frac{1}{k-2}}-1\right)}{\log(k)}=:R(k).
\]
To finish up the proof we are left to show that $R(k)>\frac{k}{\log(k)}$ holds for $k$ sufficiently large.
To see this we claim that for sufficiently large $k$ we have

\[
R(k)> \frac{k}{2}\left[1+\frac{\log(k^{\frac{1}{k-2}}-1)}{\log(k)}\right]> \frac{k}{2}\left[1+\frac{\log(k^{\frac{1}{k}}-1)}{\log(k)}\right]>\frac{k}{\log(k)}. 
\]
For $k\geq 4$ only the right most inequality is non trivial. It is equivalent to $k\cdot (k^{\frac{1}{k}}-1)>e^{2}$ or $k^{\frac{1}{k}}>\frac{e^{2}}{k}+1$.
However, this 
is easily seen to be true by $k^{\frac{1}{k}}>1+\frac{M}{k}$ for $k\geq k_{0}(M)$ for arbitrary $M$,
 which follows from the monotonic convergence of $\lim_{k\to\infty}(1+\frac{M}{k})^{k}=e^{M}$ for every $M$.
Putting $M>e^{2}$ arbitrary proves the claim. 
\end{proof}
\noindent Remark: We have $\lim_{k\to\infty}\frac{R(k)}{k}=0$, but the convergence is very slow, in particular we have seen
it is slower than $\frac{1}{\log(k)}$. 

\section{The case $\underline{\psi}_{j+1}=\overline{\psi}_{j}$}

\noindent In this chaper we want to treat the case of $\zeta_{1},\zeta_{2},\ldots ,\zeta_{k}$ that define functions $\psi_{j}$ with
the property 

\begin{equation} \label{eq:rindvieg}
 \underline{\psi}_{j+1}=\overline{\psi}_{j},\qquad 1\leq j\leq k.
\end{equation}
We will exclude
the generic case where all approximation constants $\underline{\psi}_{j},\overline{\psi}_{j}$ are
zero, and call the cases where $\underline{\psi}_{1}=-1$ the {\em degenerate cases}.
Note, that the numbers Corollary \ref{korolar} deals with lead to functions $\psi_{j}$ that satisfy the 
equalities of (\ref{eq:rindvieg}) for $1\leq j\leq k-2$,
but not for $j\in{\{k-1,k\}}$ in the case $C<\infty$ (see the bounds given in the remark following Corollary \ref{korolar}).
A {\em special case} of (\ref{eq:rindvieg}), for which an idealized picture is shown below, is of particular interest.

\begin{figure}[h!]
 \psfrag{a}{$q_{s}$}
 \psfrag{b}{$q_{s+1}$}
 \psfrag{c}{$q_{s+2}$}
 \psfrag{d}{$\underline{\psi}_{1}$}
 \psfrag{e}{$\overline{\psi}_{1}=\underline{\psi}_{2}$}
 \psfrag{f}{$\overline{\psi}_{2}=\underline{\psi}_{3}$}
 \psfrag{g}{$\overline{\psi}_{3}$}
 \psfrag{h}{$p$}
 \psfrag{i}{$q_{s}$}
 \psfrag{j}{$q_{s+1}$}
 \psfrag{k}{$q_{s+2}$}
 \psfrag{r}{$x_{i}$}
 \psfrag{s}{$x_{i+1}$}
 \psfrag{t}{$x_{i+2}$}
 \psfrag{ell}{$q$}
 \psfrag{m}{$L_{1}$}
 \psfrag{n}{$L_{2}$}
 \psfrag{o}{$L_{3}$}
 \scalebox{0.65}{\centering
 \includegraphics{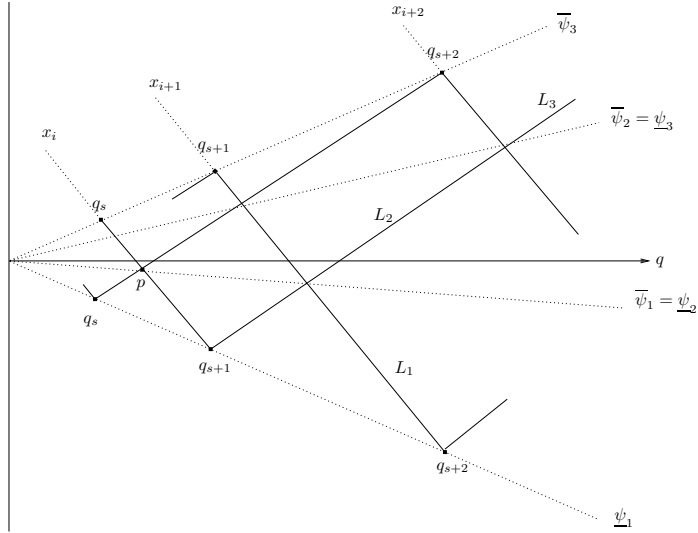}}
 \caption{$\underline{\psi}_{j+1}=\overline{\psi}_{j}$, special case}
\end{figure} 

\noindent If $(x^{(i)})_{i\geq 1}$ denotes the sequence of the first coordinates of approximation vectors 
$(x,y_{1},\ldots ,y_{k})$, then we consider the special case where there is a sequence of $(x^{(i)})_{i\geq 1}$
 with (ideally) constant value of $\frac{\log(x^{(i+1)})}{\log(x^{(i)})}$ such that 

\[
 \max_{1\leq j\leq k}\vert \zeta_{j}x^{(i)}-y_{j}\vert \thicksim (x^{(i)})^{-\omega}.
\]
This sequence causes the second minimum to attain the value $\omega_{2}$, and so on. Thus in particular we have 

\begin{eqnarray}
 \lim_{i\to\infty} \frac{\log\left(x^{(i+1)}\right)}{\log\left(x^{(i)}\right)}&=& \frac{\omega_{j}}{\omega_{j+1}}, \qquad 1\leq j\leq k   \label{eq:pattricia}  \\
 \lim_{i\to\infty} -\frac{\log\left(\max_{1\leq t\leq k} \left\vert x^{(i)}\zeta_{t}-y_{t}^{(i)}\right\vert\right)}{\log\left(x^{(i)}\right)}&=& \omega. \label{eq:trapizia} 
\end{eqnarray}
\noindent Let the equistence of a sequence $x^{(i)})_{i\geq 1}$ such that (\ref{eq:pattricia}),(\ref{eq:trapizia})
holds and additionally for $i\geq i_{0}$ 
every $k+1$ consecutive approximation vectors $(x^{(j)},y_{1}^{(j)},\ldots,y_{k}^{(j)})$
belonging to $x^{(i)},x^{(i+1)},\ldots ,x^{(i+k)}$ (ie $j\in{\{i,i+1,\ldots ,i+k\}}$) are linearly independent 
be our definition of the {\em special case} mentioned above.\\
\noindent Roy shows, that numbers he defines as {\em extremal numbers} $\zeta$ in the introduction of \cite{4} satisfy the property of the special case of (\ref{eq:rindvieg})           
for $k=2$ and $\zeta_{1}=\zeta, \zeta_{2}=\zeta^{2}$ and yield $\omega_{j}=\gamma^{j-1}$ for $1\leq j\leq 3$
 and $\widehat{\omega}_{3}=\gamma^{3}$ with $\gamma:=\frac{\sqrt{5}-1}{2}$. We are interested in other particular cases
of the special case of (\ref{eq:rindvieg}). \\
It follows in general by (\ref{eq:pattricia}),(\ref{eq:trapizia}) that all the values $\underline{\psi}_{.},\overline{\psi}_{.}$ are determined by the value
 $\underline{\psi}_{1}$ (or equivalently $\omega$). 
This holds for the degenerate case in particular. However, we will show in Theorem \ref{theo9} that this phenomenon holds 
for {\em all} $(\zeta_{1},\zeta_{2},\ldots ,\zeta_{k})$ in the degenerate case of (\ref{eq:rindvieg}). 
By virtue of Corollary \ref{korolar} we can easily provide concrete examples for the degenerate case.
Before we do so, for the sake of completeness we give a general result about the degenerate case of (\ref{eq:rindvieg}).

\begin{prop} \label{kokorp}
Assume the approximation functions arising from $\zeta_{1},\ldots ,\zeta_{k}$ satisfy \\
 $\underline{\psi}_{1}=-1$ and (\ref{eq:rindvieg}). Then they already satisfy

\begin{eqnarray} 
 \underline{\psi}_{1}&=& -1  \label{eq:obdasgeht}  \\
  \overline{\psi}_{1}&=& \frac{1-k}{2k} =\underline{\psi}_{2}  \label{eq:dasauch}  \\
  \overline{\psi}_{j}&=& \frac{1}{k}= \underline{\psi}_{j+1}, \qquad 2\leq j\leq k,     \label{eq:unddasauch}  \\
 \overline{\psi}_{k+1}&=& \frac{1}{k},          \label{eq:unddies}
\end{eqnarray}

and hence in particular fall under the special case.
\end{prop}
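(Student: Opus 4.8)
The plan is to determine every constant occurring in \eqref{eq:obdasgeht}--\eqref{eq:unddies} one after another, using only the sum constraint \eqref{eq:zokz}, the monotonicity \eqref{eq:mbni}, the a priori bounds \eqref{eq:1000alk}, and the chain of equalities \eqref{eq:rindvieg}. Writing $q=\log Q$ as usual, the starting observation is that \eqref{eq:zokz} gives $|\sum_{i=1}^{k+1}\psi_i(Q)|\le C(\Lambda)/q$, hence $\sum_{i=1}^{k+1}\psi_i(Q)\to 0$ as $Q\to\infty$; this will be used repeatedly. Note that \eqref{eq:obdasgeht} is simply the hypothesis.

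First I would prove $\overline{\psi}_2=\tfrac1k$ by the same argument already employed in the proof of Theorem \ref{satz1}. Suppose instead $\overline{\psi}_2=\tfrac1k-\epsilon$ with $\epsilon>0$. Choose $Q_m\to\infty$ with $\psi_1(Q_m)\to\underline{\psi}_1=-1$. For $m$ large we then have $\psi_1(Q_m)<-1+\tfrac\epsilon3$, while $\psi_2(Q_m)<\tfrac1k-\tfrac{2\epsilon}3$ (by definition of $\overline{\psi}_2$) and $\psi_j(Q_m)\le\tfrac1k$ for $3\le j\le k+1$ by \eqref{eq:mbni}; summing gives $\sum_{i=1}^{k+1}\psi_i(Q_m)<-\tfrac\epsilon3$ for arbitrarily large $Q_m$, contradicting \eqref{eq:zokz}. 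Hence $\overline{\psi}_2=\tfrac1k$. Then \eqref{eq:rindvieg} with $j=2$ yields $\underline{\psi}_3=\tfrac1k$, and monotonicity together with the upper bound $\tfrac1k$ in \eqref{eq:mbni} forces $\underline{\psi}_j=\overline{\psi}_j=\tfrac1k$ for all $3\le j\le k+1$. Combined with $\overline{\psi}_2=\tfrac1k$ this is exactly \eqref{eq:unddasauch} and \eqref{eq:unddies}.

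It remains to evaluate the common value $\beta:=\overline{\psi}_1=\underline{\psi}_2$, where the equality is \eqref{eq:rindvieg} with $j=1$. The lower bound $\beta\ge\frac{1-k}{2k}$ is immediate from \eqref{eq:1000alk} with $j=2$. For the matching upper bound I would use that, since $\psi_j(Q)\to\tfrac1k$ for $3\le j\le k+1$ and $\sum_i\psi_i(Q)\to 0$, we get $\psi_1(Q)+\psi_2(Q)\to-\frac{k-1}{k}$. Picking $Q_m\to\infty$ with $\psi_1(Q_m)\to\overline{\psi}_1=\beta$ forces $\psi_2(Q_m)\to-\frac{k-1}{k}-\beta$; but $\liminf_m\psi_2(Q_m)\ge\underline{\psi}_2=\beta$, so $-\frac{k-1}{k}-\beta\ge\beta$, i.e.\ $\beta\le\frac{1-k}{2k}$. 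Thus $\beta=\frac{1-k}{2k}$, which is \eqref{eq:dasauch}, and \eqref{eq:obdasgeht}--\eqref{eq:unddies} are all established.

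For the concluding assertion that such $\zeta_1,\dots,\zeta_k$ fall under the special case, I would feed the values just obtained into \eqref{eq:2} to get $\omega=\infty$, $\omega_2=1$, $\omega_j=0$ for $j\ge 3$, $\widehat{\omega}=1$, $\widehat{\omega}_j=0$ for $j\ge 2$, and then examine the piecewise-linear trajectory $(L_1,\dots,L_{k+1})$: the now-rigid limit values, the slope restriction $\{-1,\tfrac1k\}$, and $\sum_i L_i(q)=O(1)$ leave essentially one admissible asymptotic shape, from which the sequence $(x^{(i)})$ of best-approximation denominators obeying \eqref{eq:pattricia}--\eqref{eq:trapizia} together with the required linear independence of consecutive $(k{+}1)$-tuples can be read off. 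I expect this last point --- actually exhibiting the best-approximation sequence and checking independence of consecutive tuples --- to be the main obstacle; the determination of the constants in the previous paragraphs is forced and robust, whereas identifying the precise dynamics needs a careful, though routine, case analysis of the $L_j$.
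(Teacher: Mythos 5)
Your derivation of \eqref{eq:obdasgeht}--\eqref{eq:unddies} is correct and follows essentially the same route as the paper: first force $\overline{\psi}_2=\tfrac1k$ via the sum constraint \eqref{eq:zokz} exactly as in the proof of Theorem~\ref{satz1}, propagate $\tfrac1k$ upward through \eqref{eq:rindvieg} and the ceiling in \eqref{eq:mbni} to obtain \eqref{eq:unddasauch}--\eqref{eq:unddies}, note that $\tfrac{1-k}{2k}$ is a lower bound from \eqref{eq:1000alk}, and finally squeeze $\overline{\psi}_1$ from above using \eqref{eq:zokz} once more. Your last squeeze is a minor variant of the paper's: you take $Q_m$ realizing $\overline{\psi}_1$, use $\psi_1(Q)+\psi_2(Q)\to-\tfrac{k-1}{k}$ (valid because $\lim\psi_j=\tfrac1k$ exists for $j\ge 3$), and then invoke $\liminf\psi_2(Q_m)\ge\underline{\psi}_2=\beta$ from the hypothesis \eqref{eq:rindvieg} with $j=1$; the paper instead invokes only the monotonicity $\psi_2(Q_m)\ge\psi_1(Q_m)$ from \eqref{eq:mbni}, which gives the same inequality $2\beta\le-\tfrac{k-1}{k}$. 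Both are perfectly sound and arguably the paper's version is marginally more economical (it never needs the $j=1$ instance of \eqref{eq:rindvieg} in the squeeze itself, relying on \eqref{eq:rmuo} only at the very end to identify $\underline{\psi}_2$ with $\overline{\psi}_1$).

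On the final assertion that such $(\zeta_1,\dots,\zeta_k)$ ``fall under the special case'': your instinct that this is the hard part is reasonable, but you should be aware that the paper's own proof in fact stops after establishing \eqref{eq:obdasgeht}--\eqref{eq:unddies} and supplies no argument at all for the ``special case'' claim (existence of a best-approximation sequence $(x^{(i)})$ satisfying \eqref{eq:pattricia}--\eqref{eq:trapizia} with the linear-independence condition on $(k+1)$ consecutive vectors). Your computation of the classical constants via \eqref{eq:2} ($\omega=\infty$, $\omega_2=\widehat{\omega}=1$, $\omega_j=0$ for $j\ge 3$, $\widehat{\omega}_j=0$ for $j\ge 2$) is correct, and your sketch of how the rigid limits plus the slope restriction and \eqref{eq:zoku} should force the trajectory is plausible, but turning that sketch into a proof --- in particular actually producing the $(x^{(i)})$ and verifying the independence of consecutive $(k+1)$-tuples from $\underline{\psi}_1=-1$, $\overline{\psi}_1=\tfrac{1-k}{2k}$, $\overline{\psi}_j=\tfrac1k$ alone --- is a genuine additional step that neither you nor the paper carries out. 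So as a comparison: on \eqref{eq:obdasgeht}--\eqref{eq:unddies} you match the paper; on the ``special case'' coda, both arguments are incomplete, and you are at least explicit about where the difficulty lies.
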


\begin{proof}
 First note that in general if $\underline{\psi}_{1}=-1$ we have $\overline{\psi}_{j}=\frac{1}{k}$ for $2\leq j\leq k+1$
by means of (\ref{eq:zokz}), see the proof of Theorem \ref{satz1}. Consequently by (\ref{eq:rindvieg}) we have
(\ref{eq:unddasauch}) and (\ref{eq:unddies}). \\
\noindent For (\ref{eq:dasauch}) note first that $\frac{1-k}{2k}$ is always a lower bound as established in (\ref{eq:1000alk}),(\ref{eq:1001alk}).
So by (\ref{eq:rmuo}) it suffices to prove $\overline{\psi}_{1}\leq \frac{1-k}{2k}$.\\
\noindent Suppose we had $\overline{\psi}_{1}>\frac{1-k}{2k}$. This means for some sequence $(q_{n})_{n\geq 1}$ tending to infinity 
we have $\psi_{1}(q_{n})>V$ for some $V>\frac{1-k}{2k}$. Putting $V_{0}:=2(V-\frac{1-k}{2k})>0$ and using $\psi_{2}(q_{n})\geq \psi_{1}(q_{n})$ we have

\[
 \lim_{n\to\infty}\sum_{j=1}^{k+1} \psi_{j}(q_{n})> 2V+(k-1)\left[\frac{1}{k}-\frac{V_{0}}{2(k-1)}\right]=\frac{V_{0}}{2}>0, 
\]
a contradiction to (\ref{eq:zokz}) since $\lim_{n\to\infty} q_{n}=\infty$.
\end{proof}

\begin{theo} \label{theo9}
For any $k\geq 2$ there exist real numbers $\zeta_{1},\zeta_{2},\ldots,\zeta_{k}$ as in Corollary \ref{korolar}
with $C=\infty$ together with $1$
 linearly independent over $\mathbb{Q}$ that satisfy the degenerate case of (\ref{eq:rindvieg}) and hence
(\ref{eq:obdasgeht})-(\ref{eq:unddies}) by Proposition \ref{kokorp}.

\end{theo}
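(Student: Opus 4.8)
The plan is to specialize Corollary \ref{korolar} (and the remark following it about $C=\infty$, which Theorem \ref{satz2} was noted to cover) to the limiting regime $C=\infty$ and check that the resulting vector $(\zeta_1,\dots,\zeta_k)$ lands in the degenerate case of (\ref{eq:rindvieg}). Concretely, first I would fix a sequence $(b_n)_{n\ge 1}$ of positive integers that grows so fast that $\lim_{n\to\infty} b_{n+1}/b_n=\infty$, for instance $b_{n+1}=\lceil n\, b_n\rceil$ or even $b_{n+1}=2^{b_n}$, with $b_1$ large. Splitting $(b_n)$ into $k$ subsequences $(a_{n,j})_{n\ge1}$, $1\le j\le k$, respecting the ordering (\ref{eq:frure}) — e.g. $a_{n,j}=b_{k(n-1)+j}$ — and setting $q_{n,j}=2^{a_{n,j}}$, $\zeta_j=\sum_{n\ge1}2^{-a_{n,j}}$, all basic assumptions of Theorem \ref{satz2} hold, and $\lim b_{n+1}/b_n=\infty$ is the case "$C=\infty$" referred to in Corollary \ref{korolar}. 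So Theorem \ref{satz2} (valid for $C=\infty$ by the remark) applies and, since every difference quotient $\frac{b_{n+1-i}-b_{n-i}}{b_n}\to 1$ or $\to 0$ depending on whether the larger index is the "fresh" one, it gives $\omega=\infty$ and $\omega_j=\widehat\omega_j=0$ for $2\le j\le k+1$; equivalently, via (\ref{eq:2}), $\underline{\psi}_1=-1$ and $\overline{\psi}_j=\underline{\psi}_j=\frac1k$ for $2\le j\le k+1$. This is precisely (\ref{eq:obdasgeht})--(\ref{eq:unddies}), so in particular (\ref{eq:rindvieg}) holds and the vector is of the degenerate case.

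Next I would secure the $\mathbb{Q}$-linear independence of $\{1,\zeta_1,\dots,\zeta_k\}$. This is exactly the argument already run in Proposition \ref{prop1} and re-used in Corollary \ref{coroll}: start from the fast sequence $(b_n)$ above, and for $j=2,\dots,k$ in turn perturb the subsequence $(a_{n,j})$ by adding $1$ to the terms indexed by an arbitrary subset $A_j\subseteq\mathbb{N}$. The family of all such perturbations is uncountable, the $\mathbb{Q}$-span of $\{1,\zeta_1,\dots,\zeta_{j-1}\}$ already built is countable, and any perturbation leaves $\lim b_{n+1}/b_n=\infty$ intact (it changes each $b_n$ by at most $1$); hence one can pick perturbations realizing linear independence step by step, and the final vector still satisfies the hypotheses of Theorem \ref{satz2}. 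Combining with the previous paragraph and invoking Proposition \ref{kokorp} to upgrade "degenerate case of (\ref{eq:rindvieg})" to the full list (\ref{eq:obdasgeht})--(\ref{eq:unddies}) completes the proof.

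The only genuinely delicate point is justifying that Theorem \ref{satz2} — and in particular the exact values $\widehat\omega_j=\liminf \frac{b_{n-j+1}-b_{n-j}}{b_n}$ for $2\le j\le k-1$, and then the fact that $\underline{\psi}_1=-1$ forces $\overline{\psi}_j=\frac1k$ for all $j\ge2$ — really holds in the limit $C=\infty$. For the $\widehat\omega_j$ with $2\le j\le k-1$ one reads off directly that each difference quotient $\frac{b_{n-j+1}-b_{n-j}}{b_n}\to 0$ when $C=\infty$, giving $\widehat\omega_j=0$; for $\omega_j$ with $2\le j\le k-1$ one similarly gets $\omega_j=\limsup \frac{b_{n-j+2}-b_{n-j+1}}{b_n}$, which is $0$ since the numerator is $o(b_n)$; and for the remaining indices $j\in\{k,k+1\}$ the needed vanishing of $\omega_j,\widehat\omega_j$ comes not from Theorem \ref{satz2} (which only gives lower bounds there) but from the convex-body constraint (\ref{eq:zokz}) exactly as in the last part of the proof of Theorem \ref{satz1}: $\underline\psi_1=-1$ together with $\sum_i\psi_i(q)$ bounded forces $\overline\psi_j=\frac1k$ for $2\le j\le k+1$, hence $\omega_j=\widehat\omega_j=0$ by (\ref{eq:2}). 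I expect the write-up to consist mostly of citing these earlier results with the substitution $C=\infty$ and noting that the perturbation trick is orthogonal to the growth hypotheses; no new estimate is required.
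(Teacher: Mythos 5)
There is a genuine error at the heart of your verification step. The degenerate case of (\ref{eq:rindvieg}) is \emph{not} equivalent to ``$\underline{\psi}_{1}=-1$ and $\overline{\psi}_{j}=\underline{\psi}_{j}=\frac1k$ for $2\le j\le k+1$'': condition (\ref{eq:dasauch}) demands $\overline{\psi}_{1}=\underline{\psi}_{2}=\frac{1-k}{2k}$, which via (\ref{eq:2}) is exactly $\widehat{\omega}=\omega_{2}=1$, not $\omega_{2}=\widehat{\omega}_{2}=0$. Your claimed list of values is in fact impossible: by (\ref{eq:rmuo}) and (\ref{eq:11}) one always has $\underline{\psi}_{2}\le\overline{\psi}_{1}\le 0$, so $\underline{\psi}_{2}=\frac1k$ cannot occur, and with it the $j=1$ instance of (\ref{eq:rindvieg}) is never verified in your argument. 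The slip comes from your computation of the difference quotients: for $j=2$ the numerator $b_{n}-b_{n-1}$ is \emph{not} $o(b_{n})$; when $b_{n+1}/b_{n}\to\infty$ one has $\frac{b_{n}-b_{n-1}}{b_{n}}\to 1$, so Theorem \ref{satz2} (valid for $C=\infty$ by the remark) gives $\omega_{2}=\widehat{\omega}=1$ --- which is precisely what the theorem needs, cf. (\ref{eq:91alk}) in the paper's proof, but which you assert to be $0$, so (\ref{eq:dasauch}) is never established.

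A second gap concerns the top indices. You claim that $\underline{\psi}_{1}=-1$ together with (\ref{eq:zokz}) forces $\omega_{j}=\widehat{\omega}_{j}=0$ for $j\in\{k,k+1\}$. That argument (end of the proof of Theorem \ref{satz1}) only controls $\psi_{j}(Q)$ along the sequence of $Q$ on which $\psi_{1}(Q)$ is near $-1$; it therefore yields $\overline{\psi}_{j}=\frac1k$, i.e.\ $\widehat{\omega}_{j}=0$, but says nothing about the liminf $\underline{\psi}_{j}$, i.e.\ about $\omega_{j}$ (indeed in Theorem \ref{satz1} one has $\omega=\infty$ while $\omega_{2},\omega_{3},\dots$ are positive; the sentence ``$\omega_{j}=0$'' there is a misprint for $\widehat{\omega}_{j}=0$). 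To get $\omega_{k}=\omega_{k+1}=0$ you need a genuine upper bound, e.g.\ monotonicity $\omega_{k+1}\le\omega_{k}\le\omega_{k-1}$ combined with the exact value $\omega_{k-1}=0$ furnished by Corollary \ref{korolar} with $C=\infty$, which works for $k\ge4$; the borderline cases are exactly where the paper argues differently (for $k=2$ it uses Jarnik's identity $\omega_{3}+\widehat{\omega}=1$ together with (\ref{eq:rmuo}) and (\ref{eq:2222neu}), and note that for $k=2$ Theorem \ref{satz2} determines only $\omega$ and $\widehat{\omega}$, so your blanket appeal to it gives nothing there). The perturbation argument for $\mathbb{Q}$-linear independence is fine, but as written the core computation does not prove the theorem.
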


\begin{proof}
 Using (\ref{eq:2}) we obtain the equivalent system

\begin{eqnarray}
 \omega&=&\infty    \label{eq:90alk} \\
 \widehat{\omega}&=&1 =\;\; \omega_{2}  \label{eq:91alk}  \\
 \widehat{\omega}_{j}&=& 0=\;\; \omega_{j+1}, \qquad 2\leq j\leq k   \label{eq:92alk}     \\
 \widehat{\omega}_{k+1}&=& 0.         \label{eq:93alk}
\end{eqnarray}
In the case $k\geq 3$, we can just apply Corollary \ref{korolar} with $C=\infty$ because then we obviously have

\[
 \lim_{C\to\infty} \frac{C-1}{C}=1,\qquad \lim_{C\to\infty} \frac{C-1}{C^{j}}= 0,\quad j\geq 2,
\]
which gives (\ref{eq:90alk})-(\ref{eq:93alk}). However, in the case $k=2$ and $C=\infty$ we can also apply 
Corollary \ref{korolar} with a slightly more sophisticated argumentation. Of course we directly infer (\ref{eq:90alk})
and the left equation in (\ref{eq:91alk}) follows as for $k\geq 3$. From (\ref{eq:90alk}) we can immediately
 deduce $\widehat{\omega}_{j}=0$ for $j=2,3$ as in the proof of Theorem \ref{satz1}, which is a rephrasing of the left hand side 
of (\ref{eq:92alk}) and (\ref{eq:93alk}). By the left equation in (\ref{eq:91alk}) and Jarnik's identity
$\omega_{3}+\widehat{\omega}=1$ (see comments between the end of proof of Theorem \ref{satz1} and Corollary \ref{koroo}) we get $\omega_{3}=0$,
 i.e. the right hand side of (\ref{eq:92alk}). For the missing right hand equation of (\ref{eq:91alk}) note that
 $\omega_{2}\geq \widehat{\omega}$ is always true by (\ref{eq:rmuo}) and on the other hand
$\omega_{2}\leq 1$ by (\ref{eq:2222neu}), so by the left hand
equality in (\ref{eq:91alk}) we infer the right hand equality of (\ref{eq:91alk}). 
\end{proof}
\noindent This allows to show that the bounds in (\ref{eq:2000neu})-(\ref{eq:2004neu}) are best possible if considered independently
by using only three types of vectors $(\zeta_{1},\zeta_{2},\ldots ,\zeta_{k})$ depending on the dimension $k$.
These types are:
\begin{itemize}
 \item a set of together with $1$ $\mathbb{Q}$-linearly independent {\em algebraic} numbers $\zeta_{1},\zeta_{2},\ldots ,\zeta_{k}$ (leading to the generic case)  \\
 \item $\zeta_{1},\zeta_{2},\ldots ,\zeta_{k}$ as in Corollary \ref{korolar} with $C=\infty$, for example $\zeta_{j}=\sum_{n\geq 1} \frac{1}{(nk+j)!}$ for $1\leq j\leq k$       \\
 \item $\zeta_{1},\zeta_{2},\ldots ,\zeta_{k}$ as in Corollary \ref{korol1}
\end{itemize}

\begin{coro}  \label{kor_11}
The bounds (\ref{eq:2000neu})-(\ref{eq:2004neu}) are all (each for itself) 
optimal among $(\zeta_{1},\zeta_{2},\ldots ,\zeta_{k})$ that are 
$\mathbb{Q}$-linearly independent together with $1$.
\end{coro}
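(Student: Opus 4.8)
The plan is to run through the $2k+4$ inequalities of (\ref{eq:2000neu})--(\ref{eq:2004neu}) one at a time and, for each, to exhibit a vector $(\zeta_1,\ldots,\zeta_k)$ with $1,\zeta_1,\ldots,\zeta_k$ linearly independent over $\mathbb{Q}$ at which the bound is met with equality (or, for the upper bound in (\ref{eq:2000neu}), with $\omega=\infty$). The vectors are all available already: the purely algebraic ones, those of Corollary \ref{korol1}, and those of Theorem \ref{theo9} (the case $C=\infty$ of Corollary \ref{korolar}); the only fresh input is the evaluation of the constants in the algebraic case.

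First I would settle the \emph{generic case}. For algebraic $\zeta_1,\ldots,\zeta_k$ that are $\mathbb{Q}$-linearly independent together with $1$, Schmidt's theorem on simultaneous approximation to algebraic numbers gives $\omega_1=1/k$; Dirichlet's theorem (\ref{eq:11}) gives $\widehat\omega_1\geq 1/k$, hence $\widehat\omega_1=1/k$, and (\ref{eq:2}) then forces $\underline\psi_1=\overline\psi_1=0$, i.e.\ $\psi_1(Q)\to 0$. Since $\psi_1\leq\psi_j$ for every $j$ by (\ref{eq:mbni}) while $q^{-1}|\sum_i L_i(q)|\to 0$ by (\ref{eq:zoku}), all $\psi_j(Q)\to 0$, and (\ref{eq:2}) gives $\omega_j=\widehat\omega_j=1/k$ for $1\leq j\leq k+1$. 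This one family realizes the lower bounds $1/k$ in (\ref{eq:2000neu}), (\ref{eq:2222neu}), (\ref{eq:2001neu}), the upper bound in (\ref{eq:2004neu}), and the upper bound of (\ref{eq:2003neu}) in its instance $j=k$.

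Next I would read off the two $\omega=\infty$ families. The vectors of Corollary \ref{korol1} have $\omega_j=1/(j-1)$ for $1\leq j\leq k+1$, giving $\omega=\infty$ in (\ref{eq:2000neu}), $\omega_2=1$ in (\ref{eq:2222neu}) and the upper bound $1/(j-1)$ in (\ref{eq:2002neu}) for $3\leq j\leq k+1$. The vectors of Theorem \ref{theo9} have $\omega=\infty$, $\omega_2=\widehat\omega=1$, $\omega_{j+1}=\widehat\omega_j=0$ for $2\leq j\leq k$ and $\widehat\omega_{k+1}=0$, giving the lower bound $0$ in (\ref{eq:2002neu}), the upper bound $\widehat\omega=1$ in (\ref{eq:2001neu}) and the lower bounds $0$ in (\ref{eq:2003neu}) and (\ref{eq:2004neu}). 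After these two steps the only inequalities not yet verified are the upper bounds $\widehat\omega_j\leq 1/j$ of (\ref{eq:2003neu}) in the range $2\leq j\leq k-1$ (vacuous for $k=2$).

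I expect these last bounds to be the main obstacle. By (\ref{eq:2}), $\widehat\omega_j=1/j$ is equivalent to realizing the Schmidt--Summerer lower bound $\overline\psi_j=\frac{j-k}{k(j+1)}$ from (\ref{eq:1001alk}), a value that is strictly negative for $j<k$; none of the three families above does this, since for them $\overline\psi_j$ equals $0$ (algebraic case) or $1/k$ (both Liouville cases). The plan would be to construct the missing vectors by adapting the mechanism of Theorems \ref{satz1}--\ref{satz2}: take $\zeta_i=\sum_n q_{n,i}^{-1}$ and design the ordered sequence $(b_n)=(\log q_{n,i})$ so that, in the block of indices controlling the $j$-th successive minimum, exactly $j$ best approximation exponents cluster at a common value while the remaining $k+1-j$ stay at the Dirichlet level $1/k$; tracing the functions $L_i$ as in those proofs would then yield $\overline\psi_j=\frac{j-k}{k(j+1)}$, and (\ref{eq:2}) converts this to $\widehat\omega_j=1/j$, the $\mathbb{Q}$-linear independence of $1,\zeta_1,\ldots,\zeta_k$ being arranged exactly as in Proposition \ref{prop1}. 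Together with Steps 1 and 2 this hits every endpoint of (\ref{eq:2000neu})--(\ref{eq:2004neu}), which is the corollary.
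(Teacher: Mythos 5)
Your first two steps are correct and use exactly the same three families as the paper (algebraic vectors, Corollary \ref{korol1}, Theorem \ref{theo9}), and your bookkeeping of which endpoint each family realizes is accurate. The divergence is in the leftover cases you isolate, the upper bounds $\widehat\omega_j\le\tfrac1j$ of (\ref{eq:2003neu}) for $2\le j\le k-1$: the paper dismisses these with the remark that ``all the other bounds are $1/k$'' and attributes them to the generic case, which is accurate only for $j=k$, so your diagnosis of where the real difficulty sits is in fact sharper than the paper's own text. However, your proposed completion of precisely these cases is only a sketch, and as described it would not work; this is a genuine gap, and it is the whole remaining content of the statement once the three known families are used.

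Concretely: the configuration you describe ($j$ minima clustered at a common value, the remaining $k+1-j$ at the Dirichlet level $\tfrac1k$) is, by the sum rule (\ref{eq:zoku}), the extremal configuration for $\underline\psi_j=\tfrac{j-k-1}{kj}$, i.e.\ for $\omega_j=\tfrac1{j-1}$, which is already realized by Corollary \ref{korol1}. To force $\overline\psi_j=\tfrac{j-k}{k(j+1)}$, equivalently $\widehat\omega_j=\tfrac1j$ via (\ref{eq:2}), one needs $j+1$ minima clustered at that bottom value at the coincidence times furnished by (\ref{eq:asdf}), with the other $k-j$ at $\tfrac1k$, and in addition $\psi_j(Q)\le\tfrac{j-k}{k(j+1)}+o(1)$ for \emph{all} large $Q$, since $\overline\psi_j$ is a $\limsup$; your sketch addresses neither the count nor this uniform-in-$Q$ requirement. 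Moreover this cannot be reached by ``adapting the mechanism of Theorems \ref{satz1}--\ref{satz2}'': in those constructions a single truncation denominator dominates at each scale, which is exactly why patterns like $\widehat\omega_j=\omega_{j+1}$ appear and why several minima never cluster. Quantitatively, Theorem \ref{satz1} forces $\widehat\omega_j=0$ for $j\ge2$, while in Theorem \ref{satz2} and Corollaries \ref{korolar}, \ref{kor2} one has $\widehat\omega_j=\liminf_n \tfrac{b_{n-j+1}-b_{n-j}}{b_n}$ with increasing gaps, so $b_n\ge b_{n-j}+j\,(b_{n-j+1}-b_{n-j})$ and hence $\widehat\omega_j<\tfrac1j$ strictly (in the geometric case $\widehat\omega_j=\tfrac{C-1}{C^{j}}\le\tfrac{(j-1)^{j-1}}{j^{j}}<\tfrac1j$). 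So the missing endpoints require a genuinely different construction with several comparable independent approximations at every scale, and until that is supplied your argument does not prove the corollary for those bounds.
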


\begin{proof}
In Corollary \ref{korol1} we have seen, that the upper bounds in (\ref{eq:2000neu}),(\ref{eq:2222neu}),
(\ref{eq:2002neu}) as well as the lower bounds in 
(\ref{eq:2001neu}),(\ref{eq:2003neu}),(\ref{eq:2004neu}) cannot be improved.\\
In Theorem \ref{theo9} we've just seen, that the left hand side of (\ref{eq:2002neu})
and the right hand side of (\ref{eq:2001neu}) are optimal.\\
However, all the other bounds are $1/k$ and it is well known that all constants
 $\omega_{j},\widehat{\omega}_{j}$ are equal to $1/k$ in the generic case. To give concrete examples, an implication of
Schmidt's subspace theorem says, that for all $\mathbb{Q}$-linearly independent algebraic numbers 
all approximation constants take the value $1/k$ (which follows already from $\omega=1/k$ by (\ref{eq:2}) and (\ref{eq:zokz})). 
So the lower bounds of (\ref{eq:2000neu}),(\ref{eq:2222neu}) such as the upper bounds of (\ref{eq:2003neu}),
(\ref{eq:2004neu}) cannot be improved either, and the list is complete.
\end{proof}
\noindent Let $\zeta_{1},\zeta_{2},\ldots ,\zeta_{k}$ be real numbers that lead to a special case of (\ref{eq:rindvieg}), i.e. 
(\ref{eq:pattricia}),(\ref{eq:trapizia}) hold.
It follows directly from (\ref{eq:pattricia}),(\ref{eq:trapizia}) that all the constants $\omega_{j},\widehat{\omega}_{j}$ 
only depend on $\omega$. It is easy to check that more precisely we have

\begin{eqnarray} 
 \frac{(1+\omega)^{k+1}}{\omega}&=& \frac{(1+\widehat{\omega}_{k+1})^{k+1}}{\widehat{\omega}_{k+1}}  \label{eq:hunggar}  \\
\omega_{j}&=& \omega^{1-\frac{j-1}{k+1}}\widehat{\omega}_{k+1}^{\frac{j-1}{k+1}}, \qquad 1\leq j\leq k+1.   \label{eq:turnfisch}
\end{eqnarray}
Using this we now prove a lower bound for $\widehat{\omega}$ in dependence of $\omega$.

\begin{prop} \label{wasqob}
 In the special case of (\ref{eq:rindvieg}) for $k\geq 2$ we have

\[
 \frac{\omega}{\omega+1}< \widehat{\omega}\leq 1.
\]

\end{prop}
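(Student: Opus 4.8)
The plan is to split the claim into the (easy) upper bound $\widehat{\omega}\le 1$ and the lower bound $\widehat{\omega}>\frac{\omega}{\omega+1}$, and to assume throughout that $\omega<\infty$: if $\omega=\infty$ we are in the degenerate case $\underline{\psi}_1=-1$, already settled by Proposition \ref{kokorp}, where $\widehat{\omega}=1$. The upper bound $\widehat{\omega}\le 1$ is then just (\ref{eq:2001neu}), so all the work goes into the lower bound.

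The first step is to translate the hypothesis of the special case. Applying the identity (\ref{eq:2}) to two consecutive indices $j$ and $j+1$ and inserting the defining equality $\overline{\psi}_j=\underline{\psi}_{j+1}$ of (\ref{eq:rindvieg}), one may cancel the common positive factor $1+\overline{\psi}_j$ to obtain $\widehat{\omega}_j=\omega_{j+1}$ for $1\le j\le k$; in particular $\widehat{\omega}=\widehat{\omega}_1=\omega_2$.

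The second step is a short computation using the two closed formulas (\ref{eq:hunggar}) and (\ref{eq:turnfisch}) that govern the special case. By (\ref{eq:turnfisch}) with $j=2$ we have $\omega_2=\omega^{k/(k+1)}\,\widehat{\omega}_{k+1}^{1/(k+1)}$, so the desired inequality $\widehat{\omega}=\omega_2>\frac{\omega}{\omega+1}$, after raising both sides to the power $k+1$ and dividing by $\omega^k>0$, becomes $\widehat{\omega}_{k+1}>\frac{\omega}{(1+\omega)^{k+1}}$. On the other hand (\ref{eq:hunggar}) rearranges to $\frac{\omega}{(1+\omega)^{k+1}}=\frac{\widehat{\omega}_{k+1}}{(1+\widehat{\omega}_{k+1})^{k+1}}$, so the inequality is equivalent to $(1+\widehat{\omega}_{k+1})^{k+1}>1$, that is, to $\widehat{\omega}_{k+1}>0$.

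Thus everything reduces to the single fact $\widehat{\omega}_{k+1}>0$, which is the only point needing a separate argument. I would obtain it by contradiction: if $\widehat{\omega}_{k+1}=0$, then since every exponent $\frac{j-1}{k+1}$ with $j\ge 2$ is positive, (\ref{eq:turnfisch}) forces $\omega_j=0$ for all $2\le j\le k+1$, hence $\widehat{\omega}=\omega_2=0$, contradicting $\widehat{\omega}\ge\frac1k$ from (\ref{eq:2001neu}). This finishes the proof; the overall argument is essentially bookkeeping with (\ref{eq:2}), (\ref{eq:hunggar}) and (\ref{eq:turnfisch}), the nonvanishing of $\widehat{\omega}_{k+1}$ being the only substantive input.
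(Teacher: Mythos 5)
Your proof is correct (under the same implicit restriction $\omega<\infty$ that the paper's own argument needs), but it takes a noticeably more direct route. The paper argues by contradiction: assuming $\widehat{\omega}\leq\frac{\omega}{\omega+1}$ it deduces from (\ref{eq:turnfisch}) that $\widehat{\omega}_{k+1}\leq\frac{\omega}{(\omega+1)^{k+1}}$, then introduces the auxiliary function $f_{k}(x)=\frac{(x+1)^{k+1}}{x}$, uses its monotonicity on $(0,\frac{1}{k})$ together with $\widehat{\omega}_{k+1}<\frac{1}{k}$ and (\ref{eq:hunggar}) to get $f_{k}(\omega)\geq f_{k}\bigl(1/f_{k}(\omega)\bigr)$, and derives the contradiction $\frac{1}{z}\geq f_{k}(z)$. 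You instead observe that, after raising to the power $k+1$ and using (\ref{eq:turnfisch}) with $j=2$ and then (\ref{eq:hunggar}), the desired strict inequality is \emph{equivalent} to $(1+\widehat{\omega}_{k+1})^{k+1}>1$, i.e.\ to $\widehat{\omega}_{k+1}>0$, which you get at once since $\widehat{\omega}_{k+1}=0$ would force $\omega_{2}=\widehat{\omega}=0$, contradicting (\ref{eq:2001neu}). This removes the whole monotonicity analysis of $f_{k}$ and makes transparent that the only substantive input is the positivity of $\widehat{\omega}_{k+1}$; you also justify $\widehat{\omega}=\omega_{2}$ properly from (\ref{eq:2}) and (\ref{eq:rindvieg}), where the paper merely asserts it. Two small points of care: invoke (\ref{eq:hunggar}) only after $\widehat{\omega}_{k+1}>0$ is in hand (as written, your order of presentation is fine logically, but the identity is meaningless when $\widehat{\omega}_{k+1}=0$), and note that in the degenerate case $\omega=\infty$ the statement itself becomes borderline ($\widehat{\omega}=1$ while $\frac{\omega}{\omega+1}\to 1$), so neither your argument nor the paper's covers it; your explicit reduction to $\omega<\infty$ is therefore at least as honest as the original proof.
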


\begin{proof}
 The right hand side inequality is just (\ref{eq:2001neu}).\\
\noindent Suppose for some $k\geq 2$ we had 
$\widehat{\omega}\leq \frac{\omega}{\omega+1}$. Putting $j=2$ in (\ref{eq:turnfisch}) (note $\omega_{2}=\widehat{\omega}$ by definition)
we have

\begin{equation} \label{eq:roott}
 \widehat{\omega}_{k+1}\leq \left[\left(\frac{\omega}{\omega+1}\right)\omega^{-\frac{k}{k+1}}\right]^{k+1}=\frac{\omega}{(\omega+1)^{k+1}}.
\end{equation}
Denote 

\[
 f_{k}(x):= \frac{(x+1)^{k+1}}{x}, \qquad k\geq 1.
\]
Differentiating shows that $f_{k}$ decreases on $x\in{(0,\frac{1}{k})}$ and increases on $x\in{(\frac{1}{k},\infty)}$,
so its global mimimum on $(0,\infty)$ is at $x=\frac{1}{k}$. Combining this with 
$\widehat{\omega}_{k+1}<\frac{1}{k}$, (\ref{eq:hunggar}) and (\ref{eq:roott}) we obtain

\[
 f_{k}(\omega)= f_{k}(\widehat{\omega}_{k+1})\geq f_{k}\left(\frac{1}{f_{k}(\omega)}\right), \qquad k\geq 1.
\]
Putting $z:=\frac{1}{f_{k}(\omega)}$ this gives $\frac{1}{z}\geq f_{k}(z)$, which is false, as $\frac{1}{z}$ is an
expression in the binomial expansion of $f_{k}(z)$.
\end{proof}

\noindent Remarks: 1) One can proof that for $k\geq 2$ we have $\lim_{\omega\to\infty}\omega+1-\frac{\omega}{\widehat{\omega}}=0$.\\
Observe, that in Corollary \ref{kor2} with arbitrary $C$ we always have $\widehat{\omega}=\frac{\omega}{\omega+1}$. Proposition \ref{wasqob} 
shows that given $\omega$ the resulting special case of (\ref{eq:rindvieg}) leads to a larger value of $\widehat{\omega}$.
It may be conjectured that among all $\zeta_{1},\zeta_{2},\ldots ,\zeta_{k}$
linearly independent together with $1$ with prescribed $\omega=\omega_{0}$, the quantity $\widehat{\omega}$ is maximised for the special case of (\ref{eq:rindvieg})
with the value $\omega=\omega_{0}$. \\
\noindent 2) Observe that the inequality $\frac{\widehat{\omega}^{2}}{1-\widehat{\omega}}\leq \omega$ always holds as established by Jarnik, see Theorem 1 page 331 in \cite{2}.                 
 So together with Proposition \ref{wasqob} in the special case of (\ref{eq:rindvieg}) we have 

\[
 \frac{\widehat{\omega}^{2}}{1-\widehat{\omega}}\leq \omega \leq \frac{\widehat{\omega}}{1-\widehat{\omega}}.
\]

\vspace{2cm}

\noindent The author thanks L. Summerer for help in translation and W. Schmidt for an idea to 
generalize my initial versions of Theorems \ref{satz1}, \ref{satz2}, \ref{satz6} which simplified the proofs and made them more concise. 

\newpage

\end{document}